\DeclareMathAlphabet{\mathpzc}{OT1}{pzc}{m}{it}
\theoremstyle{plain}
\newtheorem{theorem}{Theorem}[section]
\newtheorem{lemma}[theorem]{Lemma}
\newtheorem{proposition}[theorem]{Proposition}
\newtheorem{corollary}[theorem]{Corollary}
\theoremstyle{definition}
\newtheorem{examples}[theorem]{Examples}
\newtheorem{example}[theorem]{Example}
\theoremstyle{remark}
\newtheorem{remark}[theorem]{Remark}
\newenvironment{eqcond}{\begin{enumerate}}{\end{enumerate}}
\newenvironment{sideremark}{\begin{remark}}{\end{remark}} 
\newcommand{\Rw}{\Rightarrow}
\newcommand{\Lw}{\Leftarrow}
\newcommand{\hrw}{\hookrightarrow}
\newcommand{\fra}{\mathfrak{a}}
\newcommand{\frf}{\mathfrak{f}}
\newcommand{\frg}{\mathfrak{g}}
\newcommand{\frj}{\mathfrak{j}}
\newcommand{\frp}{\mathfrak{p}}
\newcommand{\frq}{\mathfrak{q}}
\newcommand{\fru}{\mathfrak{u}}
\newcommand{\frw}{\mathfrak{w}}
\newcommand{\frx}{\mathfrak{x}}
\newcommand{\fry}{\mathfrak{y}}
\newcommand{\frz}{\mathfrak{z}}
\newcommand{\calA}{\mathcal{A}}
\newcommand{\calB}{\mathcal{B}}
\newcommand{\calC}{\mathcal{C}}
\newcommand{\calO}{\mathcal{O}}
\newcommand{\frW}{\mathfrak{W}}
\newcommand{\frX}{\mathfrak{X}}
\newcommand{\frY}{\mathfrak{Y}}
\DeclareMathOperator{\ev}{ev}
\DeclareMathOperator{\cl}{cl}
\DeclareMathOperator{\interior}{int}
\DeclareMathOperator{\can}{can}
\DeclareMathOperator{\colim}{colim}
\DeclareMathOperator{\yoneda}{\mathpzc{y}}
\DeclareMathOperator{\yonedaT}{\mathpzc{Y}}
\DeclareMathOperator{\yonmult}{\mathpzc{m}}
\DeclareMathOperator{\trans}{\mathpzc{t}}
\DeclareMathOperator{\Sup}{Sup}
\DeclareMathOperator{\upc}{\uparrow\!}
\DeclareMathOperator{\downc}{\downarrow\!}
\DeclareMathOperator{\kar}{kar}
\DeclareMathOperator{\Fix}{Fix}
\DeclareMathOperator{\Pt}{Pt}
\DeclareMathOperator{\Spl}{Spl}
\DeclareMathOperator{\map}{map}
\newcommand{\mate}[1]{\,^\ulcorner\! #1^\urcorner}
\newcommand{\fspstr}[2]{\llbracket #1,#2\rrbracket}
\newcommand{\catfont}[1]{\mathsf{#1}}
\newcommand{\V}{\catfont{V}}
\newcommand{\two}{\catfont{2}}
\newcommand{\SET}{\catfont{Set}}
\newcommand{\NREL}{\catfont{NRel}}
\newcommand{\REL}{\catfont{Rel}}
\newcommand{\UREL}{\catfont{URel}}
\newcommand{\MOD}{\catfont{Mod}}
\newcommand{\TOP}{\catfont{Top}}
\newcommand{\CDTOP}{\catfont{CDTop}}
\newcommand{\TATOP}{\catfont{TATop}}
\newcommand{\SOB}{\catfont{Sob}}
\newcommand{\AP}{\catfont{App}}
\newcommand{\CDAP}{\catfont{CDApp}}
\newcommand{\TAAP}{\catfont{TAApp}}
\newcommand{\ASOB}{\catfont{ASob}}
\newcommand{\FRM}{\catfont{Frm}}
\newcommand{\SLAT}{\catfont{SLat}}
\newcommand{\SUP}{\catfont{Sup}}
\newcommand{\ORD}{\catfont{Ord}}
\newcommand{\TAL}{\catfont{TAL}}
\newcommand{\CCD}{\catfont{CCD}}
\newcommand{\MET}{\catfont{Met}}
\newcommand{\CDMET}{\catfont{CDMet}}
\newcommand{\ORDCH}{\catfont{OrdCompHaus}}
\newcommand{\METCH}{\catfont{MetCompHaus}}
\newcommand{\COCTS}{\catfont{Cocts}}
\newcommand{\ContMet}{\catfont{ContMet}}
\newcommand{\Mat}[1]{#1\text{-}\catfont{Rel}}
\newcommand{\Mod}[1]{#1\text{-}\catfont{Mod}}
\newcommand{\Cat}[1]{#1\text{-}\catfont{Cat}}
\newcommand{\Cocts}[1]{#1\text{-}\catfont{Cocts}}
\newcommand{\Inf}[1]{#1\text{-}\catfont{Inf}}
\newcommand{\DTop}[1]{#1\text{-}\catfont{DTop}}
\newcommand{\DApp}[1]{#1\text{-}\catfont{DApp}}
\newcommand{\ATop}[1]{#1\text{-}\catfont{ATop}}
\newcommand{\AApp}[1]{#1\text{-}\catfont{AApp}}
\newcommand{\Sob}[1]{#1\text{-}\catfont{Sob}}
\newcommand{\relto}{{\longrightarrow\hspace*{-2.8ex}{\mapstochar}\hspace*{2.6ex}}}
\newcommand{\modto}{{\longrightarrow\hspace*{-2.8ex}{\circ}\hspace*{1.2ex}}}
\newcommand{\kto}{\relbar\joinrel\rightharpoonup}
\newcommand{\krelto}{\,{\kto\hspace*{-2.5ex}{\mapstochar}\hspace*{2.6ex}}}
\newcommand{\kmodto}{\,{\kto\hspace*{-2.8ex}{\circ}\hspace*{1.3ex}}}
\newcommand{\xrelto}[1]{{\xrightarrow{#1}\hspace*{-2.8ex}{\mapstochar}\hspace*{2.8ex}}}
\newcommand{\kleisli}{\circ}
\newcommand{\blackright}{\mbox{ $-\!\mbox{\footnotesize $\bullet$}$ }}
\newcommand{\blackleft}{\mbox{ $\mbox{\footnotesize $\bullet$}\!-$ }}
\newcommand{\whiteleft}{\mbox{ $\mbox{\footnotesize $\circ$}\!-$ }}
\newcommand{\monadfont}[1]{\mathbbm{#1}}
\newcommand{\mT}{\monadfont{T}}
\newcommand{\mU}{\monadfont{U}}
\newcommand{\mPf}{\monadfont{B}}
\newcommand{\mM}{\monadfont{M}}
\newcommand{\mPsh}{\monadfont{P}}
\newcommand{\mPhi}{\monadfont{I}^\Phi}
\newcommand{\mPhidiscrete}{\monadfont{J}^\Phi}
\newcommand{\monad}{(T,e,m)}
\newcommand{\monadb}{(T',e',m')}
\newcommand{\umonad}{(U,e,m)}
\newcommand{\Phimonad}{(\Phi,\yoneda^{\Phi},\yonmult^\Phi)}
\newcommand{\doo}[1]{\overset{\centerdot}{#1}}
\newcommand{\eps}{\varepsilon}
\newcommand{\op}{\mathrm{op}}
\newcommand{\sep}{\mathrm{sep}}
\newcommand{\true}{\mathsf{true}}
\newcommand{\false}{\mathsf{false}}
\newcommand{\field}[1]{\mathds{#1}}
\newcommand{\N}{\field{N}}
\newcommand{\R}{\field{R}}
\begin{document}

\title{Duality for distributive space}

\author{Dirk Hofmann}
\thanks{Partial financial assistance by by Centro de Investiga\c{c}\~ao e Desenvolvimento em Matem\'atica e Aplica\c{c}\~oes da Universidade de Aveiro/FCT and the project MONDRIAN (under the contract PTDC/EIA-CCO/108302/2008) is gratefully acknowledged.}
\address{Departamento de Matem\'{a}tica\\ Universidade de Aveiro\\3810-193 Aveiro\\ Portugal}
\email{dirk@ua.pt}
\date{} 
\subjclass[2010]{06B35, 06B30, 18D05, 18D15, 18D20, 18B35, 18C15, 54A05, 54A20, 54B30}
\keywords{Duality theory, domain theory, quantale-enriched category, ultrafilter monad, module, cocompleteness, distributivity}

\begin{abstract}
The main source of inspiration for the present paper is the work of R.~Rosebrugh and R.J.~Wood on constructive complete distributive lattices where the authors employ elegantly the concepts of adjunction and module in their study of ordered sets. Both notions (suitably adapted) are available in topology too, which permits us to investigate topological, metric and other kinds of spaces in a similar spirit. Therefore, relative to a choice $\Phi$ of modules, we consider spaces which admit all colimits with weight in $\Phi$, as well as (suitably defined) $\Phi$-distributive and $\Phi$-algebraic spaces. We show that the category of $\Phi$-distributive spaces and $\Phi$-colimit preserving maps is dually equivalent to the idempotent splitting completion of a category of spaces and convergence relations between them. We explain the connection of these results to the traditional duality of spaces with frames, and conclude further duality theorems. Finally, we study properties and structures of the resulting categories, in particular monoidal (closed) structures.

\end{abstract}

\maketitle

\setcounter{tocdepth}{1}
\tableofcontents

\section*{Introduction}

The work presented in this paper grew out of a simple observation regarding the well-known adjunction
\[
\xymatrix@C=3em{\ORD\ar@<1.35ex>[r] & \CCD^\mathrm{op}\ar@<1ex>[l]_\bot}
\]
between the category $\ORD$ of ordered sets and monotone maps and the dual of the category $\CCD$ of (constructively) completely distributive lattices and left and right-adjoint monotone maps. The functor $\ORD\to\CCD^\mathrm{op}$ can be constructed by either sending an ordered set $X$ to the set $\mathrm{Down}(X)\cong\ORD(X^\mathrm{op},2)$ of all down-sets of $X$ or to the set $\mathrm{Up}(X)\cong\ORD(X,2)$ of all up-sets of $X$. The dual adjunction between $\TOP$ and $\FRM$ can be seen as an extension of $\ORD\rightleftarrows\CCD^\op$ to topological spaces; however, this is only really true for the second construction. The first one does not even seem to make sense for topological spaces since it is not clear what $X^\op$ means now. But our recent study of ``spaces as categories'' required such a notion anyway, and since \citep{CH_Compl} we have a candidate which so far proved to be useful. Therefore we ask here (Sections \ref{SectionDuality} and \ref{SectionFramesDist}) about the construction $X\mapsto\TOP(X^\op,\two)$, and the answer leads to a scenario which seems to be even closer to the $\ORD$-case than the ``usual'' dual adjunction with frames.

As it is well-known, the adjunction between $\ORD$ and $\CCD$ restricts to a dual equivalence between $\ORD$ and the full subcategory $\TAL$ of $\CCD$ defined by the totally algebraic lattices. This equivalence is actually the restriction of a larger one, in \citep{RW_CCD4} R.~Rosebrugh and R.J.~Wood showed that the category $\CCD_{\sup}$ of constructive complete distributive lattices and suprema preserving maps is equivalent to the idempotent splitting completion of the category $\REL$ of sets and relations. This theorem turned out to be very powerful since it synthesises many facts about complete distributive lattices, implies various known duality theorems in lattice theory (for example, $\ORD^\op\cong\TAL$ as well as $\SET^\op\cong\mathsf{CABool}$ follow easily), and allows to transfer nice properties and structures from $\REL$ to $\CCD_{\sup}$. Later on, in \citeyear{RW_Split}, \citeauthor{RW_Split} observed that this theorem is not really about lattices but rather a special case of a much more general result about ``a mere monad $D$ on a mere category $\mathsf{C}$ where idempotents split''. More precise, they show that the idempotent splitting completion of the Kleisli category of $D$ is equivalent to the category of split Eilenberg-Moore algebras for $D$ (see Section \ref{SectionRelative}). The equivalence above appears now for both the power-set monad on $\SET$ and the down-set monad on $\ORD$, and further interesting results one obtains by considering submonads of the down-set monad on $\ORD$. More importantly for us, this result paves the road towards similar results for topological, metric and approach spaces. In fact, we argue here that many applications of \citep{RW_Split} can be found in topology since many interesting classes of spaces can be described as algebras for certain monads. For instance, compact Hausdorff spaces are the algebras for the ultrafilter monad on $\SET$, continuous lattices are the algebras for the filter monad on $\SET$, $\ORD$ and $\TOP$, and stably compact spaces are the algebras for the prime filter monad on $\ORD$ and $\TOP$. 

One might wonder at this point what kind of monads on, say, metric spaces correspond to the filter monad. This brings us to our second concern which is the search for a metric counterparts of domain-theoretic notions. This and related questions came into life thanks to the observation (due to Hausdorff, but see \citep{Law_MetLogClo}) that a metric $d:X\times X\to[0,\infty]$ can be seen as generalised order relation where one trades the Boolean algebra $\two=\{\false,\true\}$ for the quantale $[0,\infty]$. In fact, many order theoretic notions can be appropriately translated into the metric context, for instance
\begin{itemize}
\item a non-empty (up-closed) subset of $X$ can be identified with a (monotone) map $\varphi:X\to\two$ satisfying $\exists x\in X\,.\,\varphi(x)$; in a metric space we would now talk about a (contraction) map $\varphi:X\to[0,\infty]$ with $\inf_{x\in\varphi}\varphi(x)=0$;
\item a subset $\varphi:X\to\two$ is directed if it is non-empty and, for all $x,y\in X$,
\[
\varphi(x)\,\&\,\varphi(y)\,\Rw\,\exists z\in X\,.\,(x\le z \,\&\, y\le z \,\&\, \varphi(z));
\]
which in the metric world could be written as 
\[
\varphi(x)\,+\,\varphi(y)\,\geqslant\,\inf_{z\in X}(d(x,z)+d(y,z)+\varphi(z)).
\]
\end{itemize}
Hence, the notion of order ideal and eventually the order theoretic definition of continuous lattice can be brought into the realm of metric spaces. These analogies led indeed to a many interesting results, see for instance \citep{Was_DomGirQant,KW_LimColimCoind} and \citep{Wag_PhD}. But continuous lattices live at the border between order, topology and algebra; they are also known to be precisely the injective topological T$_0$-spaces and the algebras for the filter monad. Therefore we take here injectivity as primitive notion and define ``continuous metric space'' as an injective space. Of course, space cannot mean topological space here, we have to consider a $[0,\infty]$-variant of the definition of topological space. Fortunately, such a notion was already introduced in the 80's under the name approach space by \citeauthor{Low_Ap} (\citeyear{Low_Ap}), and these spaces are extensively described in his \citeyear{Low_ApBook}-book. We also remark that the use of approach spaces in quantitative domain theory was already advocated in \citep{Win_Solve,Win_ScottApp}. Since \citep{Hof_Cocompl} we know that injective T$_0$-approach spaces can be described as cocomplete T$_0$-approach spaces, and that together with colimit preserving maps they form a monadic category over $\SET$ and $\MET$. The latter result provides us with a monad which takes the role of the filter monad in this quantitative setting. In Section \ref{SectionContMetSp} we have a closer look at the algebras for this monad, showing in particular that they define a Cartesian closed subcategory of $\AP$. In Section \ref{SectionRelative} we apply the techniques of \citep{RW_CCD4,RW_Split} to submonads of the (approach) filter monad. Finally, in Section \ref{SectionExamples} we discuss examples.

The work we present here was developed in the context of $(\mT,\V)$-categories where $\mT$ and $\V$ are part of a strict topological theory as described in \citep{Hof_TopTh}. However, we decided to stay here in the more familiar context of topological, metric and approach spaces since we feel that the huge amount of special notations needed in the general case makes the actual results less accessible. We stress that most of our results can be derived for $(\mT,\V)$-categories in general, just a few are indeed only valid for metric or approach spaces. We will indicate whenever there are such restrictions. In Section \ref{SectTopAppCat} we recall the convergence-relational approach to topological and approach spaces, which is the context where ``spaces look like categories''. Section \ref{SectComplOrdSet} presents basic facts about ordered sets in the language of modules and adjunction, and Section \ref{SectMetSp} recalls Lawvere's \citep{Law_MetLogClo} view on metric spaces as enriched categories. In Section \ref{SectionDualSpace} we define the notion of dual spaces. Our approach differs here slightly from previous work \citep{CH_Compl}. In Section \ref{SectCocomplSp} we recall the main results on cocomplete spaces of \citep{Hof_Cocompl,CH_CocomplII}.


Finally, some warnings:
\begin{enumerate}
\item The underlying order of a topological space $X$ we define as
\[
 x\le y\hspace{1em}\text{whenever }\doo{x}\to y,
\]
which is the \emph{dual} of the specialisation order. We do so because we wish to think of the underlying order as the ``point shadow'' of the convergence relation.
\item We consider here the Sierpi\'nski space $\two=\{0,1\}$ with $\{1\}$ closed. This is compatible with the point above since the underlying order gives $0\le 1$, but note that $\varphi:X\to\two$ is the characteristic map of a closed subset.
\item In general we try to avoid imposing separation axioms: our topological spaces need not be T$_0$, our ordered sets need not be anti-symmetric, and so on. This is usually harmless but creates some ``pseudo-issues'' since many notions are only unique up to equivalence. 
\end{enumerate}

\section{Topological and approach spaces as categories}\label{SectTopAppCat}

First we to recall how a topological space can be viewed as a category. The principal idea is to think of the convergence $\mathfrak{x}\to x$ of an ultrafilter $\mathfrak{x}$ on $X$ to a point $x$ in $X$ as a morphism in $X$, so that the convergence relation
\begin{equation*}
UX\times X\to\mathsf{2}
\end{equation*}
becomes the ``hom-functor'' of $X$. Such a relation is the convergence relation of a (unique) topology on $X$ if and only if (see \citep{Bar_RelAlg})
\begin{align}\label{AxiomsTop}
e_X(x)\to x &&\text{and}&& (\mathfrak{X}\to\mathfrak{x}\;\&\;\mathfrak{x}\to x)\,\Rw\, m_X(\mathfrak{X})\to x,
\end{align}
for all $x\in X$, $\mathfrak{x}\in UX$ and $\mathfrak{X}\in UUX$, where $e_X(x)=\doo{x}$ the principal ultrafilter generated by $x\in X$ and
\begin{align*}
m_X(\frX)=\{A\subseteq X\mid A^\#\in\frX\} &&(A^\#=\{\frx\in UX\mid A\in\frx\}).
\end{align*}
The first arrow of \eqref{AxiomsTop} one might see as an identity on $x$, and the second condition of \eqref{AxiomsTop} one might interpret as the existence of a ``composite'' of ``composable pairs of arrows''. Furthermore, a function $f:X\to Y$ between topological spaces is continuous whenever $\mathfrak{x}\to x$ in $X$ implies $f(\mathfrak{x})\to f(x)$ in $Y$, that is, $f$ associates to each object in $X$ an object in $Y$ and to each arrow in $X$ an arrow in $Y$ between the corresponding (ultrafilter of) objects in $Y$. As usual, $\TOP$ denotes the category of topological spaces and continuous maps.

Note that the second condition of \eqref{AxiomsTop} talks about the convergence of an ultrafilter of ultrafilters $\frX$ to an ultrafilter $\frx$, which comes from applying the ultrafilter functor $U$ to the \emph{relation} $a:UX\relto X$. In general, for a relation $r:X\relto Y$ from $X$ to $Y$ and ultrafilters $\frx\in UX$ and $\fry\in UY$ one puts
\[
\frx\,(Ur)\,\fry \;\;\; \text{ if }\;\;\; \forall A\in\frx,B\in\fry\,\exists x\in A,y\in B\,.\,x\,r\, y,
\]
and obtains this way an extension of the $\SET$-functor $U$ to a functor $U:\REL\to\REL$ which, moreover, satisfies $U(r^\circ)=(Ur)^\circ$ (where $r^\circ:Y\relto X$ is defined as $y\,r^\circ\, x$ whenever $x\,r\,y$) and $Ur\subseteq Us$ whenever $r\subseteq s$. Furthermore, the multiplication $m$ is still a natural transformation $m:UU\to U$, but $e:1\to U$ satisfies only $e_Y\cdot r\subseteq Ur\cdot e_X$ for any relation $r:X\relto Y$.

To describe approach spaces, it is only necessary to trade relation for \emph{numerical relation}: $r:X\relto Y$ stands now for $r:X\times Y\to[0,\infty]$. We sketch here very briefly this construction which can be found in \citep{CH_TopFeat}, and for questions concerning approach spaces in general we refer to \citep{Low_ApBook}. Given also $s:Y\relto Z$, one can calculate the composite $s\cdot r:X\relto Z$ by the formula
\begin{align}\label{CompNREL}
s\cdot r(x,z)&=\inf_{y\in Y}(r(x,y)+s(y,z)).
\end{align}
Each relation becomes a numerical relation by interpreting $\true$ as $0$ and $\false$ as $\infty$, and with this interpretation the identity function is also the identity numerical relation. Taking into account the opposite of the pointwise order on the set of all numerical relations from $X$ to $Y$, one obtains the ordered category $\NREL$ of sets and numerical relations. The ``turning around'' of the natural order of $[0,\infty]$ has its roots in the translation of ``$\false\le\true$'' in $\two$ to ``$\infty\geqslant 0$'' in $[0,\infty]$. Due to this switch ``$\exists$'' becomes ``$\inf$'' in \eqref{CompNREL}, but note also that ``$\&$'' is replaced by ``$+$''. Implication $x\Rw-:\two\to\two$ is right adjoint to $x\,\&\,-:\two\to\two$ for $x\in\two$; similarly, for $x\in[0,\infty]$, the map ``addition with $x$'' $x+-:[0,\infty]\to[0,\infty]$ has a right adjoint, namely $\hom(x,-):[0,\infty]\to[0,\infty],\;y\mapsto\max\{y-x,0\}$.

As above, the ultrafilter functor $U$ extends to $U:\NREL\to\NREL$ (with the same properties as in the topological case) via
\[
Ur(\frx,\fry)=\sup_{A\in\frx,B\in\fry}\inf_{x\in A,y\in B}r(x,y)
\]
for a numerical relation $r:X\times Y\to[0,\infty]$. We remark that a different but equivalent formula defining the extension of $U$ to $\NREL$ was used in \citep{CH_TopFeat}, the one above is taken from \citep{CT_MultiCat}.
\begin{sideremark}\label{RemarkXi}
Thinking of a relation $r:X\relto Y$ as a subset $R\subseteq X\times Y$, it is not hard to see that
\[
 \frx\,(Ur)\,\fry\iff \exists\frw\in U(X\times Y)\,.\,U\pi_1(\frw)=\frx\,\&\,U\pi_2(\frw)=\fry
\]
for all $\frx\in UX$ and $\fry\in UY$. Similarly, for a numerical relation $r:X\relto Y$ one has
\[
Ur(\frx,\fry)=\inf\{\xi\cdot Ur(\frw)\;\Bigl\lvert\;\frw\in U(X\times Y), T\pi_1(\frw)=\frx,T\pi_2(\frw)=\fry\},
\]
where $\xi:U[0,\infty]\to[0,\infty],\,\fru\mapsto\sup_{A\in\fru}\inf A$. The notation here is a bit ambiguous since $Ur$ appears on both sides, but on the ride hand side it stands for the functions $Ur:U(X\times Y)\to U[0,\infty]$. We use the occasion to mention that $\xi:U[0,\infty]\to[0,\infty]$ is actually a $\mU$-algebra structure on $[0,\infty]$, that is, a compact Hausdorff topology. Furthermore, $[0,\infty]$ is a monoid in in the category of compact Hausdorff spaces and continuous maps in two different ways since both $+:[0,\infty]\times[0,\infty]\to[0,\infty]$ and $\max:[0,\infty]\times[0,\infty]\to[0,\infty]$ are continuous. It is useful to observe that continuity of $+$ and $\max$ mean precisely that the diagrams
\begin{align*}
\xymatrix{U([0,\infty]\times[0,\infty])\ar[rr]^-{U(+)}\ar[d]_{\langle\xi\cdot U\pi_1,\xi\cdot U\pi_2\rangle} && U[0,\infty]\ar[d]^\xi\\
[0,\infty]\times[0,\infty]\ar[rr]_-{+} && [0,\infty]}
&&&&
\xymatrix{U([0,\infty]\times[0,\infty])\ar[rr]^-{U(\max)}\ar[d]_{\langle\xi\cdot U\pi_1,\xi\cdot U\pi_2\rangle} && U[0,\infty]\ar[d]^\xi\\
[0,\infty]\times[0,\infty]\ar[rr]_-{\max} && [0,\infty]}
\end{align*}
commute. Note also that $\xi$ is compatible with the map $\hom:[0,\infty]\times[0,\infty]\to[0,\infty],\,(x,y)\mapsto\hom(x,y)=\max\{y-x,0\}$ in the sense that $\xi\cdot U(\hom)\geqslant\hom\cdot\langle\xi\cdot U\pi_{_1},\xi\cdot U\pi_{_2}\rangle$.
\[
\xymatrix{
U([0,\infty]\times[0,\infty])\ar[rr]^-{U(\hom)}\ar[d]_{\langle\xi\cdot U\pi_{_1},\xi\cdot U\pi_{_2}\rangle}\ar@{}[drr]|{\leqslant} && U[0,\infty]\ar[d]^\xi\\ [0,\infty]\times[0,\infty]\ar[rr]_-{\hom} && [0,\infty]}
\]
\end{sideremark}

An \emph{approach space} can be described as a pair $(X,a)$ consisting of a set $X$ and a numerical relation $a:UX\relto X$ satisfying
\begin{align}\label{AxiomsApp}
0\geqslant a(\doo{x},x) &&\text{and}&& Ua(\frX,\frx)+a(\frx,x)\geqslant a(m_X(\frX),x),
\end{align}
and a mapping $f:X\to Y$ between approach spaces $X=(X,a)$ and $Y=(Y,b)$ is a \emph{contraction} whenever $a(\frx,x)\geqslant b(Uf(\frx),f(x))$ for all $\frx\in UX$ and $x\in X$. Approach spaces and contraction maps are the main ingredients of the category $\AP$.

There is a canonical forgetful functor $\AP\to\TOP$ sending an approach space $(X,a)$ to the topological space with the same underlying set $X$ and with the convergence relation
\[
\frx\to x \text{ whenever } a(\frx,x)=0.
\]
This functor has a left adjoint $\TOP\to\AP$ which one obtains by interpreting the convergence relation of a topological space as a numerical relation. 
\begin{sideremark}
The left adjoint functor $\TOP\to\AP$ has a further left adjoint which can be obtained by first sending an approach space $(X,a)$ to the pseudotopological space $X$ with convergence
\[
\frx\to x\text{ whenever } a(\frx,x)<\infty,
\]
and then taking its topological reflection.
\end{sideremark}

The pointfree calculus of (numerical) relations allows for a simultaneous treatment of topological and approach spaces emphasising their common nature. For instance, both axioms \eqref{AxiomsTop} and \eqref{AxiomsApp} read as
\begin{align}\label{AxiomsLaxAlg}
\xymatrix{X\ar[r]^{e_X}\ar[dr]_{1_X}^\sqsubseteq & UX\ar[d]|-{\object@{|}}^a\\ & X}
&&
\xymatrix{UUX\ar[r]^{m_X}\ar[d]|-{\object@{|}}_{Ua} & UX\ar[d]|-{\object@{|}}^a\\
 UX\ar@{}[ur]|{\sqsubseteq}\ar[r]|-{\object@{|}}_a & X}\\
1_X\sqsubseteq a\cdot e_X && a\cdot Ua\sqsubseteq a\cdot m_X\notag
\end{align}
where $\sqsubseteq$ stands either for $\subseteq$ or $\geqslant$. Since $f:X\to Y$ is continuous respectively contractive if and only if
\[
\xymatrix{UX\ar[d]|-{\object@{|}}_a\ar[r]^{Uf} & UY\ar[d]|-{\object@{|}}^b\\ X\ar[r]_f\ar@{}[ur]|{\sqsubseteq} & Y,}
\]
we can think of $\TOP$ and $\AP$ as categories of lax Eilenberg--Moore algebras. 
Using the fact that $m_X\dashv m_X^\circ$ and $e_X\dashv e_X^\circ$ in $\REL$\footnote{Since $\REL$ is an ordered category (there is an order relation on hom-sets compatible with composition), it makes sense to talk about adjunction. One easily sees that a relation $r:X\relto Y$ is a function if and only if $1_X\le r^\circ\cdot r$ and $r\cdot r^\circ\le 1_Y$, i.e.\ if $r\dashv r^\circ$.} (and hence in $\NREL$), one can express the axioms \eqref{AxiomsLaxAlg} as
\begin{align*}
e_X^\circ\sqsubseteq a &&\text{and}&& a\cdot Ua\cdot m_X^\circ\sqsubseteq a.
\end{align*}
In this context it is useful to think of a (numerical) relation $a:UX\relto X$ as an endomorphism $a:X\krelto X$, and, more general, of $r:UX\relto Y$ as an arrow $r:X\krelto Y$, called \emph{$\mU$-relation} in the sequel. Given also $s:Y\krelto Z$, one can compose $s$ and $r$ using (a variant of) \emph{Kleisli composition}:
\[
s\kleisli r:=s\cdot Ur\cdot m_X^\circ.
\]
The (numerical) relation $e_X^\circ:UX\relto X$ behaves almost as an identity arrow $X\krelto X$ since
\begin{align*}
r\kleisli e_X^\circ=r &&\text{and}&& e_Y^\circ \kleisli r\sqsupseteq r.
\end{align*}
We can now restate the second condition above as $a\kleisli a\sqsubseteq a$, or even as $a\kleisli a=a$ thanks to the first condition.

\begin{sideremark}
One calls a $\mU$-relation $r:X\krelto Y$ \emph{unitary} if $e_Y^\circ \kleisli r=r$, see \citep{Hof_ExpUnit}. These relations are not completely unfamiliar to topologists: a reflexive (numerical) relation $a:UX\relto X$ is a pretopology (preapproach structure) precisely if $a:X\krelto X$ is unitary.
\end{sideremark}

By restricting a convergence relation $a:UX\relto X$ to principal ultrafilters one obtains
\begin{itemize}
\item an order relation $a_0:=a\cdot e_X:X\relto X$ where $x\le y$ whenever $\doo{x}\to y$ (we write $\le$ for $a_0$ and $\to$ for a) if one starts with a topological space,
\item or a metric $a_0=a\cdot e_X:X\relto X$ where $a_0(x,y)=a(\doo{x},y)$ if one starts with an approach spaces.
\end{itemize}
Note that for us an order relation does not need to be anti-symmetric, hence, an ordered set $X=(X,\le)$ consists of a set $X$ and a relation $\le:X\times X\to\two$ satisfying
\begin{align*}
x\le x &&\text{and}&& (x\le y\,\&\,y\le x)\,\Rw\,x\le z.
\end{align*}
Similarly, a metric $d$ on set $X$ is only required to satisfy 
\begin{align*}
0\geqslant d(x,x) &&\text{and}&& d(x,y)+d(y,z)\geqslant d(x,z),
\end{align*}
a ``classical'' metric is then a \emph{separated} ($d(x,y)=0=d(y,x)$ implies $x=y$),  \emph{symmetric} ($d(x,y)=d(y,x)$) and \emph{finitary} ($d(x,y)<\infty$) metric. The construction $a\mapsto a\cdot e_X$ results in forgetful functors $\TOP\to\ORD$ and $\AP\to\MET$, both have a left adjoint defined by $(X,a_0)\mapsto(X,e_X^\circ\cdot U(a_0))$. Furthermore, one has a forgetful functor $\MET\to\ORD$ which can be seen as the ``point shadow'' of $\AP\to\TOP$: for a metric space $(X,d)$, define
\[
x\le y \text{ whenever } 0\geqslant d(x,y).
\]
As in the ``ultrafilter case'', $\MET\to\ORD$ has a left adjoint $\ORD\to\MET$ via interpreting an order relation as a numerical relation. 

\begin{sideremark}
The left adjoint $\ORD\to\MET$ has a further left adjoint which sends the metric $d$ on $X$ to the order
\[
x\le y \text{ whenever } d(x,y)<\infty
\]
on $X$.
\end{sideremark}

Putting everything together, we have the following commuting diagram of right adjoint functors:
\[
\xymatrix{\AP\ar[r]\ar[d] & \MET\ar[d]\\ \TOP\ar[r] & \ORD.}
\]
The pointwise ordering makes $\ORD$ an ordered category, and these forgetful functors reflect this property into $\TOP$, $\MET$ and $\AP$. Concretely, for morphisms $f,g:X\to Y$
\begin{align*}
\text{in $\TOP$:} &\hspace{1em} f\le g\text{ whenever } e_X(f(x))\to g(x)\\
\text{in $\MET$:} &\hspace{1em}  f\le g\text{ whenever } 0\geqslant d(f(x),g(x))\\
\text{in $\AP$:} &\hspace{1em}  f\le g\text{ whenever } 0\geqslant d(e_X(f(x)),g(x))
\end{align*}
for all $x\in X$. We emphasise that it is in general very useful to realise the ordered nature of ones category since it allows to speak about adjunction, a notion which will be very helpful in our study of injectivity in $\TOP$ and $\AP$. 

We have seen that both topological and approach spaces (and also metric spaces) can be described as sets equipped with a (convergence, numerical) relation satisfying two simple axioms which, moreover, remind us immediately to the reflexivity and the transitivity condition of an ordered set and, consequently, to the identity and the composition law of a category. In the next section we will have a closer look on the simplest of these kind of structures, namely ordered sets.

\section{Some facts about complete ordered sets}\label{SectComplOrdSet}

The transportation of order-theoretic concepts into the realm of spaces relies on their respective formulation in point-free style using the notions of \emph{module} (also called order-ideal or distributor) and \emph{adjunction}. In this section we give a quick overview, mainly to establish notation; and refer to \citep{Woo_OrdAdj} for a nice presentation of ``ordered sets via adjunction''.

We recall that an ordered set is complete if each down-closed subset (down-set for short) has a supremum, or, equivalently, each up-set has an infimum. Formulated more carefully, an ordered set $X$ is \emph{complete} if each up-set has an infimum, dually, it is \emph{cocomplete} if each down-set has a supremum. By definition, $X$ is complete if and only if $X^\op$ is cocomplete. The ``non-careful'' formulation above relies on the fact that, moreover, $X$ is complete if and only if $X$ is cocomplete. 

A subset $A\subseteq X$ of an ordered set $X$ is \emph{down-closed} if and only if its characteristic map is monotone of type $X^\op\to\two$; likewise, $A$ is \emph{up-closed} if and only if its characteristic map is monotone of type $X\to\two$. Both concepts can be brought under one roof by introducing the notion of \emph{module} $\varphi:X\modto Y$, which is defined as a relation $\varphi:X\relto Y$ compatible with the order relations on $X$ and $Y$ in the sense that $\varphi:X^\op\times Y\to\two$ is monotone. One quickly verifies that a relation $\varphi:X\relto Y$ is a module if and only if 
\[
(x\le x'\,\&\,x'\,\varphi\, y'\,\&\,y'\le y)\;\Rw\;x\,\varphi\, y,
\]
and the pointfree version of this formula reads as $(\le_Y\cdot\varphi\cdot\le_X)\subseteq\varphi$. Since order relations are reflexive one actually has equality, moreover, this condition can be split in two parts so that $\varphi:X\relto Y$ is a module if and only if
\begin{align*}
\varphi\cdot\le_X=\varphi &&\text{and}&& \le_Y\cdot\varphi=\varphi.
\end{align*}
Summing up, a module can be seen either as
\begin{enumerate}
\item a relation $\varphi:X\relto Y$ satisfying the two equations above, or
\item a monotone map $\varphi:X^\op\times Y\to\two$, or
\item a monotone map $\mate{\varphi}:Y\to\two^{X^\op}$.
\end{enumerate}
Note that the equivalence between (b) and (c) relies on the fact that $\ORD$ is Cartesian closed. In general, for ordered sets $X$ and $Y$, the function space $Y^X$ is given by the set of all monotone functions of type $X\to Y$ with the pointwise order: $h\le h'$ whenever $\forall x\in X\,.\,h(x)\le h'(x)$.

The order relation $\le$ on $X$ is an example of a module $\le:X\modto X$ since the transitivity axiom gives $\le\cdot\le=\le$. By definition it is the identity arrow on $X$ in the ordered category $\MOD$ of ordered sets and modules between them, where the compositional and order structure is inherited from $\REL$. Two further important examples of modules are induced by a monotone map $f:X\to Y$:
\begin{align*}
f_*:X\modto Y,\;x\,f_*\,y:\iff f(x)\le y &&\text{and}&& f^*:Y\modto X,\; y\,f^*\,x:\iff y\le f(x),
\end{align*}
and one has $f_*=b\cdot f$ and $f^*=f^\circ\cdot b$. One easily verifies the inequalities $\le_X\subseteq f^*\cdot f_*$ and $f_*\cdot f^*\subseteq\le_Y$ for a monotone map $f:X\to Y$, hence $f_*\dashv f^*$ in $\MOD$. If we think of $x\in X$ as $x:1\to X$, then $x^*$ is the  down-set $\downc x$ generated by $x$, and $x_*$ is the up-set $\upc x$ induced by $x$.  It is also worth noting that these constructions define functors
\begin{align*}
(-)_*:\ORD\to\MOD &&\text{and}&& (-)^*:\ORD^\op\to\MOD,
\end{align*}
in particular, the order relation $\le$ in $X$ is both $(1_X)_*$ and $1_X^*$. Furthermore, $f\le g$ if and only if $f^*\le g^*$ if and only if $g_*\le f_*$, hence $(-)_*$ is order reversing and $(-)^*$ is order preserving. By this observation, $f\dashv g$ in $\ORD$ if and only if $g^*\dashv f^*$ in $\MOD$, which in turn is equivalent to $f_*=g^*$. In pointwise notation, this reads as the familiar formula
\[
\forall\,x\in X,y\in Y\,.\,f(x)\le y\iff x\le g(y).
\]

Coming back to ``up's and down's'', we identify a down-set with a module of type $X\modto 1$, and an up-set with a module of type $1\modto X$. Hence, the ordered set of all down-sets of $X$ can be identified with both the exponential $\two^{X^\op}$ in $\ORD$ and the ``ordered hom-set'' $\MOD(X,1)$; and we write $PX$ to denote this object. With the latter interpretation, the mate $\mate{\varphi}:Y\to PX$ of a module $\varphi:X\modto Y$ sends $y\in Y$ to $y^*\cdot\varphi$.

\begin{sideremark}\label{AdjInMod}
The composite $\psi\cdot\varphi$ of a down-set $\psi:X\modto 1$ with an up-set $\varphi:1\modto X$ yields a module of type $1\modto 1$ which is either $\true$ or $\false$; it is true precisely if $\varphi$ and $\psi$ have a common element. On the other hand, $\varphi\cdot\psi:X\modto X$ relates $x$ and $y$ if and only if $x$ belongs to $\psi$ and $y$ belongs to $\varphi$; therefore $\varphi\cdot\psi\subseteq\le$ if and only if each element of $\psi$ is less or equal then each element of $\varphi$. From this we conclude that $\varphi\dashv\psi$ in $\MOD$ if and only if $\psi=x^*$ and $\varphi=x_*$ for some $x\in X$. Using the Axiom of Choice, we deduce that each adjunction $\varphi\dashv\psi$ in $\MOD$ with $\varphi:X\modto Y$ and $\psi:Y\modto X$ is of the form $f_*\dashv f^*$ for some $f:X\to Y$ in $\ORD$. In fact, this statement is equivalent to the Axiom of Choice as shown in \citep{BD_CauchyCompl}.
\end{sideremark}

The mate of the identity module $\le:X\modto X$ is the \emph{Yoneda embedding} $\yoneda_X:X\to PX$ sending $x\in X$ to its down closure $\downc x=x^*$, which is indeed fully faithful thanks to the well-known Yoneda lemma which states
\[\downc x\subseteq\varphi\iff x\in\varphi.\]
This is a rather trivial statement in the context of ordered sets; however, the reformulation of this result is the key in the translation process from $\ORD$ to $\TOP$ and $\AP$. Cocompleteness of an ordered set $X$ gives a map $\Sup_X:PX\to X$ which, when writing down the definition of ``Supremum'', turns out to be left adjoint to $\yoneda_X$. In fact, $X$ is cocomplete if and only if $\yoneda_X$ has a left adjoint. With the help of the Yoneda lemma one easily shows that any $\emph{monotone}$ map $L:PX\to X$ with $L\cdot\yoneda_X=1_X$ is actually left adjoint to $\yoneda_X$ (see also \ref{KZ}). Clearly, the ordered set $PX$ of down-sets is cocomplete where the supremum of a down-set of down-sets $\Psi\in PPX$ is given by union $\bigcup\Psi$, or, in the language of modules, by $\Psi\cdot(\yoneda_X)_*:X\modto 1$.

More generally, arbitrary union of modules $X\modto Y$ is again a module which tells us that each hom-set in $\MOD$ is actually a (co)complete ordered set, moreover, relational composition preserves suprema. Hence, for $\varphi:X\modto Y$, both ``composition with $\varphi$''-maps $-\cdot\varphi$ and $\varphi\cdot-$ have a right adjoint. Unwinding the definition, a right adjoint to $-\cdot\varphi$ must give, for each $\psi:X\modto Z$, the largest module of type $Y\modto Z$ whose composite with $\varphi$ is contained in $\psi$,
\[
\xymatrix{X\ar[r]|-{\object@{o}}^\psi\ar[d]|-{\object@{o}}_\varphi & Z\\ Y\ar@{..>}[ur]|-{\object@{o}}^\subseteq}
\]
and a right adjoint to $\varphi\cdot-$ must provide, for each $\psi:Z\modto Y$, the largest module of type $Z\modto X$ whose composite with $\varphi$ is contained in $\psi$.
\[
\xymatrix{Y & Z\ar[l]|-{\object@{o}}_\psi\ar@{..>}[dl]|-{\object@{o}}_\supseteq\\ X\ar[u]|-{\object@{o}}^\varphi}
\]
We denote the right adjoint of $-\cdot\varphi$ as $-\blackleft\varphi$, and call $\psi\blackleft\varphi$ the \emph{extension} of $\psi$ along $\varphi$. Similarly, $\varphi\blackright-$ denotes the right adjoint of $\varphi\cdot-$, and $\varphi\blackright\psi$ is called the \emph{lifting} of $\psi$ along $\varphi$. All what was just said about $\MOD$ could have been said earlier about $\REL$, indeed the operations $\blackleft$ and $\blackright$ are just restrictions to modules of these operations on $\REL$. It is worthwhile noting that, for instance, the extension $\psi\blackleft\varphi$ of $\psi$ along $\varphi$ is given by
\begin{equation}\label{formula_extension}
y\, (\psi\blackleft\varphi)\, z \iff \forall x\in X\,.\, (x\,\varphi\, y\,\Rw\, x\,\psi\, z) \iff \mate{\varphi}(y)\le\mate{\psi}(z).
\end{equation}
 
\begin{sideremark}\label{wcol}
A supremum of a down-set $\psi:X\modto 1$ is by definition a smallest upper bound. Now, as we observed in \ref{AdjInMod}, an up-set $\varphi:1\to X$ consists only of upper bounds of $\psi$ if and only if $\varphi\cdot\psi\subseteq\le$, and $\varphi$ is the up-set of all upper bounds precisely if $\varphi=(\le\blackleft\psi)$. Furthermore, $x\in X$ is a smallest upper bound of $\psi$ if and only if $x_*=(\le\blackleft\psi)$. We recall that $\le=(1_X)_*$, hence an ordered set $X$ is cocomplete if, for each down-set $\psi:X\modto 1$, the extension $(1_X)_*\blackleft\psi$ of $(1_X)_*$ along $\psi$ is equal to $x_*$ for some $x\in X$. It is useful to observe here that a cocomplete ordered set $X$ admits a formally more general kind of colimits, namely, for each monotone map $h:A\to X$ and each module $\psi:A\modto B$, there exists a monotone map $f:B\to X$ with $f_*=(h_*\blackleft\psi)$. A diagram of the form
\[
\xymatrix{A\ar[r]^h\ar[d]|-{\object@{o}}_\psi & X\\ B}
\]
is called \emph{weighted} (by $\psi$), such a monotone map $f$ with $f_*=(h_*\blackleft\psi)$ is a \emph{colimit} of this diagram. Furthermore, any sup-preserving map preserves also all colimits.
\end{sideremark}

A monotone map $f:X\to Y$ induces a string of adjunctions between the ``down-set-sets'': one has the inverse image function $PY\to PX,\,B\mapsto f^{-1}(B)$ which has a left adjoint $Pf:PX\to PY,\,A\mapsto\downc f(A)$ and a right adjoint $PX\to PY,\,A\mapsto\{y\in A\mid f^{-1}(\downc y)\subseteq A\}$. The ``module point of view'' allows for an elegant description of these maps using relational composition: the inverse image function is given by $\psi\mapsto\psi\cdot f_*$, its left adjoint by $\varphi\mapsto\varphi\cdot f^*$ and its right adjoint by $\varphi\mapsto\varphi\blackleft f_*$. 
\[
\xymatrix{PX\ar@/^1.5em/[rr]^{(-\cdot f^*)}_\bot\ar@/_1.5em/[rr]_{(-\blackleft f_*)}^\bot && PY.\ar[ll]|-{(-\cdot f_*)}}
\]
Note that $f_*\dashv f^*$ in $\ORD$ gives $-\cdot f^*\dashv -\cdot f_*$ in $\MOD$.
It is interesting to observe that $-\blackleft(\yoneda_X)_*$ is just the Yoneda embedding $\yoneda_{PX}$ of $PX$ (use \eqref{formula_extension}), and therefore $\Sup_{PX}=-\cdot(\yoneda_X)_*$.

More generally, for each module $\varphi:X\modto Y$ one has an adjunction $-\cdot\varphi\dashv -\blackleft\varphi$ in $\ORD$. Since $\MOD$ is an ordered category, both $-\cdot\varphi:PY\to PX$ and $-\blackleft\varphi:PX\to PY$ are by definition monotone maps, however, later on we wish to deduce that these maps are continuous respectively contractive which does \emph{not} follow from $\Mod{\mU}$ (the ultra-counterpart of $\MOD$) being ordered. Therefore we note here that $-\cdot\varphi$ is the mate of the module  $(\yoneda_Y)_*\cdot\varphi:X\modto PY$, and $-\blackleft\varphi$ is the mate of $(\mate{\varphi})_*:Y\modto PX$.

The Yoneda embedding $\yoneda_X:X\to PX$ has an important universal property: for any monotone map $f:X\to Y$ with cocomplete codomain $Y$, there exists a unique sup-preserving (=left adjoint) extension $g:PX\to Y$, i.e.\ $g\cdot\yoneda_X\cong f$. Here $g$ takes a down-set $\psi$ to a supremum of its image in $Y$. In ``modul\^es'': $\psi$ maps to the supremum of $\psi\cdot f^*$, that is, $g$ can be taken as the composite $\sup_Y\cdot(-\cdot f^*)$. The right adjoint of $g$ is even easier to describe: it is simply the mate $\mate{f_*}:Y\to PX$ of $f_*:X\modto Y$. As a consequence, the (non-full) subcategory $\SUP$ of $\ORD$ consisting of all sup-lattices (=cocomplete anti-symmetric ordered sets) and sup-preserving maps is reflective in $\ORD$, a left adjoint to the inclusion functor is given by the down-set functor $P:\ORD\to\SUP$ which sends $X$ to $PX$ and $f:X\to Y$ to the map $-\cdot f^*:PX\to PY$ (``direct image''). In fact, $\SUP$ is monadic over $\ORD$, and the induced monad is given by the down-set functor $P:\ORD\to\ORD$ with units  the Yoneda embeddings $\yoneda_X:X\to PX$ and multiplications $\yonmult_X:PPX\to PX,\,\Psi\mapsto\Psi\cdot(\yoneda_X)_*$ (``union''). Its restriction to discrete ordered sets gives the usual power-set monad on $\SET$ which has the category $\SUP$ as Eilenberg-Moore category too.

\begin{remark}\label{KZ}
The down-set monad $\mPsh$ on $\ORD$ has a very particular property: $P\yoneda_X\le \yoneda_{PX}$ for all ordered sets $X$. This seemingly harmless property turns out to be very powerful, it implies for instance that $h:PX\to X$ in $\ORD$ is the structure morphism of a $\mPsh$-algebra if and only if $h\cdot \yoneda_X=1_X$, moreover, such a map $h$ is necessarily left adjoint to $\yoneda_X$. These kinds of monads where independently introduced by \citeauthor{Koc_MonAd} (in his thesis, but see his \citeyear{Koc_MonAd} article) and \citep{Zob_Doct}, hence one refers to them as of \emph{Kock-Z\"oberlein type}. From their results one can extract the following 
\begin{theorem}
Let $\mT=\monad$ be a monad on a ordered category $\catfont{X}$ where $T$ is a 2-functor. Then the following assertions are equivalent.
\begin{eqcond}
\item $Te_X\le e_{TX}$ for all $X\in\catfont{X}$.
\item $Te_X\dashv m_X$ for all $X\in\catfont{X}$.
\item $m_X\dashv e_{TX}$ for all $X\in\catfont{X}$.
\item For all $X\in\catfont{X}$, a $\catfont{X}$-morphism $h:TX\to X$ is the structure morphism of a $\mT$-algebra if and only if $h\cdot e_X=1_X$ (and then $h\dashv e_X$).
\end{eqcond}
\end{theorem}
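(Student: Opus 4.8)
The plan is to prove the cycle of implications (i)~$\Leftrightarrow$~(ii), (i)~$\Leftrightarrow$~(iii), (i)~$\Rightarrow$~(iv) and (iv)~$\Rightarrow$~(iii), so that all four assertions become equivalent. The only facts I will use are the monad equations $m_X\cdot Te_X=1_{TX}=m_X\cdot e_{TX}$ and $m_X\cdot Tm_X=m_X\cdot m_{TX}$, the naturality identities $e_Y\cdot f=Tf\cdot e_X$ and $Tf\cdot m_X=m_Y\cdot TTf$ for $f\colon X\to Y$, and the local monotonicity of $T$ together with compatibility of $\le$ with composition. The guiding observation is that $m_X$ always splits both $Te_X$ and $e_{TX}$ from the left, so in each case exactly one inequality is missing to turn such a splitting into an adjunction, and the content of (i) is precisely that missing inequality.

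For (i)~$\Leftrightarrow$~(ii): the adjunction $Te_X\dashv m_X$ reduces, after the monad law $m_X\cdot Te_X=1_{TX}$, to the single inequality $Te_X\cdot m_X\le 1_{TTX}$. Assuming (i) I would rewrite $Te_X\cdot m_X=m_{TX}\cdot TTe_X$ by naturality of $m$, apply $T$ to $Te_X\le e_{TX}$ to get $TTe_X\le Te_{TX}$, and conclude $Te_X\cdot m_X\le m_{TX}\cdot Te_{TX}=1_{TTX}$ by the monad law at $TX$. Conversely, post-composing $Te_X\cdot m_X\le 1_{TTX}$ with $e_{TX}$ and cancelling $m_X\cdot e_{TX}=1_{TX}$ returns $Te_X\le e_{TX}$. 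The equivalence (i)~$\Leftrightarrow$~(iii) is symmetric: $m_X\dashv e_{TX}$ reduces to $1_{TTX}\le e_{TX}\cdot m_X$, and assuming (i) at $TX$ one writes $e_{TX}\cdot m_X=Tm_X\cdot e_{TTX}\ge Tm_X\cdot Te_{TX}=1_{TTX}$; for the converse, pre-composing $1_{TTX}\le e_{TX}\cdot m_X$ with $Te_X$ and cancelling $m_X\cdot Te_X=1_{TX}$ returns $Te_X\le e_{TX}$.

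For (i)~$\Rightarrow$~(iv), assume (i), hence also (ii) and (iii). One implication is just the unit axiom of an algebra. For the other, let $h\colon TX\to X$ satisfy $h\cdot e_X=1_X$. Naturality of $e$ at $h$ gives $e_X\cdot h=Th\cdot e_{TX}\ge Th\cdot Te_X=T(h\cdot e_X)=1_{TX}$, so $h\dashv e_X$. It remains to check the associativity law $h\cdot m_X=h\cdot Th$: using (ii) and $Th\cdot Te_X=1_{TX}$ one gets $h\cdot m_X=h\cdot Th\cdot Te_X\cdot m_X\le h\cdot Th$, and using (iii) together with naturality of $e$ at $h$ and $h\cdot e_X=1_X$ one gets $h\cdot Th\le h\cdot Th\cdot e_{TX}\cdot m_X=h\cdot e_X\cdot h\cdot m_X=h\cdot m_X$; hence $h\cdot m_X=h\cdot Th$ (strictly if the hom-preorders of $\catfont{X}$ are antisymmetric, and up to the evident mutually inverse comparison cells in general), so $h$ is a $\mT$-algebra structure and, as shown, $h\dashv e_X$. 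Finally (iv)~$\Rightarrow$~(iii) is immediate from the free algebra: $m_X\colon TTX\to TX$ is the structure morphism of the free $\mT$-algebra on $X$, so applying (iv) at the object $TX$ to $h=m_X$ (which satisfies $m_X\cdot e_{TX}=1_{TX}$) yields $m_X\dashv e_{TX}$, i.e.\ (iii); this closes the cycle.

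I expect the main obstacle to be the second half of (i)~$\Rightarrow$~(iv): showing that the single equation $h\cdot e_X=1_X$ already forces $h$ to be an algebra. Establishing $h\dashv e_X$ is a one-line naturality argument, but the associativity square genuinely needs \emph{both} auxiliary adjunctions — (ii) supplies $h\cdot m_X\le h\cdot Th$ and (iii) the reverse inequality — and, because the paper works with ordered categories whose hom-sets need not be posets, some care is required about whether the resulting coincidence of $h\cdot m_X$ and $h\cdot Th$ is meant strictly or up to isomorphism. Everything else is a routine transcription of the monad laws and of naturality into the ordered-categorical calculus.
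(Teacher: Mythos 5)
Your proof is correct: the reductions of (ii) and (iii) to the single missing inequality, the naturality computations, the two-sided estimate giving $h\cdot m_X = h\cdot Th$ from (ii) and (iii), and the closing step (iv)$\Rightarrow$(iii) via $h=m_X$ at the object $TX$ all check out, and the cycle does yield the full equivalence. The paper itself gives no proof of this theorem --- it only extracts the statement from the results of Kock and Z\"oberlein --- so there is no in-paper argument to compare against; yours is the standard one. Your final caveat, that without antisymmetric hom-orders one only gets $h\cdot m_X$ and $h\cdot Th$ mutually comparable rather than equal, is precisely the reservation the paper records right after the theorem (the statement as given needs separated hom-orders, which holds in the intended applications since $TX$ is separated there).
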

Actually, we should be more careful here. The result above is certainly true if the order on hom-sets of $\catfont{X}$ is separated as the argumentation relies on uniqueness of adjoints. Fortunately, in most of our cases $TX$ will be separated, hence every $\mT$-algebra is separated and everything works as well.
\end{remark}

It is also well-known that the category $\ORD_\sep$ of separated ordered sets and monotone maps is dually equivalent to the category $\TAL$ of \emph{totally algebraic lattices} and $\sup$- and $\inf$-preserving maps. We refer to \citep{RW_CCD4} for a nice presentation of this particular result, and to \citep{PT_Dual} for a nice presentation of duality theory in general. This duality can be obtained by first constructing a (dual) adjunction
\[
 D:\ORD\rightleftarrows\CCD^\op:S
\]
between $\ORD$ and the category $\CCD$ of \emph{(constructively) completely distributive lattices} and $\sup$- and $\inf$-preserving maps. We recall from \citep{FW_CCD} that a complete lattice $X$ is (ccd) if $\Sup_X:PX\to X$ has a left adjoint $t_X:X\to PX$. Note that $t_X$ corresponds to a module of type $X\modto X$, and this relation is precisely the totally-below relation $\lll$ studied first by \citep{Raney_CD}. Clearly, any lattice of the form $PX$ is (ccd) since one has the string of adjunctions 
\[
\yoneda_{PX}=-\blackleft(\yoneda_X)_*\vdash -\cdot(\yoneda_X)_*\vdash -\cdot(\yoneda_X)^*=P\!\yoneda_X.
\]
The functor $D:\ORD\to\CCD^\op$ sends an ordered set $X$ to $DX:=PX=\two^{X^\op}$ and a monotone map $f:X\to Y$ to $Df:=(-\cdot f_*):DY\to DX$ (inverse image function). For $L\in\CCD$ with $\yoneda_L\vdash\Sup_L\vdash t_L$, one defines $SL:=A$ where $A$ is the equaliser 
\[
 \xymatrix{A\ar[r]^i & L\ar@<0.7ex>[r]^{t_L}\ar@<-0.7ex>[r]_{\yoneda_L} & PL.}
\]
Hence, $A$ can be taken as $\{x\in L\mid x\lll x\}$, that is, $A$ consists precisely of the \emph{totally compact} elements of $L$. Given also $M\in\CCD$ with corresponding equaliser $SM:=B$ and a $\sup$- and $\inf$-preserving map $f:L\to M$, then its left adjoint $g:M\to L$ restricts to $g_0:B\to A$. With $Sf:=g_0$ one obtains a functor\footnote{Here we need anti-symmetry of our (ccd)-lattices. Otherwise $S$ is only a pseudo-functor.} $S:\CCD^\op\to\ORD$. By the Yoneda lemma, $\yoneda_X:X\to PX$ is fully faithful and its image is precisely the equaliser of $P\!\yoneda_X$ and $\yoneda_{PX}$. Hence, 
\[
 \xymatrix{X\ar[r]^{\yoneda_X} & PX\ar@<0.7ex>[r]^{P\!\yoneda_X}\ar@<-0.7ex>[r]_{\yoneda_{PX}} & PPX}
\]
is an equaliser diagram for each anti-symmetric ordered set $X$. From that we get a natural equivalence $\eta:1\to SD$ which is a natural isomorphism if we restrict our self to anti-symmetric ordered sets. For $L\in\CCD$, one defines $\eps_L:L\to DS(L)$ as the composite (of right adjoints) $L\xrightarrow{\yoneda_L} PL\xrightarrow{-\cdot i_*} PA$, where $i:A\hrw L$ is the inclusion map. Clearly, $\eps_L$ preserves infima, and it is not difficult to verify that $\eps_L$ preserves also suprema. Therefore $\eps_L:L\to DS(L)$ lives in $\CCD$ and is indeed the $L$-component of a natural transformation $\eps:1\to DS$. The necessary equations are now easily verified, therefore one obtains the desired dual adjunction. We will now determine the fixed subcategories. There is nothing left to do on the $\ORD$-side, we observed already that $\Fix(\eta)=\ORD_\sep$. Therefore we concentrate now on $L\in\CCD$. The left adjoint $c:PA\to L$ of $\eps_L:L\to PA$ (where $A=SL$) sends $\psi\in PA$ to $\Sup_L(\psi\cdot i^*)$ (where $i:A\hrw L$ is the inclusion map). In fact, one always has $\eps_L\cdot c=1$, hence $\eps_L$ is an equivalence if $c\cdot\eps_L\ge 1$, that is, every $x\in L$ is a supremum of the totally compact elements below $x$. A (ccd)-lattice with this property is called \emph{totally algebraic}, and we obtain $\ORD_\sep\cong\TAL^\op$ where $\TAL$ denotes the full subcategory of $\CCD$ defined by the totally algebraic lattices.

\begin{remark}\label{DualConstr}
Firstly, instead of $X\mapsto\two^{X^\op}$ one can also work with $X\mapsto\two^{X}$, and construct the dual adjunction above as
\[
 \xymatrix@C=5em{\ORD\ar@<1ex>[r]^{\hom(-,\two)} & \CCD^\op.\ar@<1ex>[l]^{\hom(-,\two)}\ar@{}[l]|\bot}
\]
In fact, one construction can be obtain from the other by composing it with the equivalence $(-)^\op:\ORD\to\ORD$.
\end{remark}

\begin{sideremark}
Secondly, as explained in \citep{RW_CCD4}, the duality $\ORD_\sep\cong\TAL^\op$ is the restriction of a ``big'' duality involving the category $\CCD_\mathrm{sup}$ of (ccd)-lattices and $\sup$-preserving maps on one side and the idempotent splitting completion $\kar(\REL)$ of $\REL$ on the other side. This result is then further generalised in \citep{RW_Split}.
\end{sideremark}

\section{A short visit to metric spaces}\label{SectMetSp}

The discussion of the previous section can be easily brought to metric spaces by considering numerical relations, which amounts to substituting $\two$ by $[0,\infty]$, $\&$ by $+$, $\true$ by $0$, $x\Rw y$ sometimes by $x\geqslant y$ and sometimes by $\max\{y-x,0\}$ (truncated minus)\footnote{\ldots because $\Rw$ sometimes denotes the right adjoint to $\&$ ($x\&-\dashv x\Rw-$), and sometimes is used to express the inclusion $r\subseteq r'$ of relations pointwise.}, $\exists$ by $\inf$, $\forall$ by $\sup$, and so on. Most notably, we will usually not consider the Cartesian structure (=$\max$-metric) on $X\times Y$ but rather the $+$-metric, and denote the resulting space as $X\otimes Y$. This comes with the advantage that, albeit $\MET$ is not Cartesian closed, it is \emph{monoidal closed} in the sense that $X\otimes-$ has a right adjoint $-^X$. Here $Y^X$ can be taken as the set of all contraction maps of type $X\to Y$ together with the sup-metric $d(h,k)=\sup_{x\in X}b(h(x),h'(x))$. We are especially interested in $PX:=[0,\infty]^{X^\op}$, where the distance on $[0,\infty]$ is given by $\delta(x,y)=y-x$, and consequently on $PX$ by $[\varphi,\psi]=\sup_{x\in X}(\psi(x)-\varphi(x))$. One should compare this with the order case where the truth value of $[\varphi\subseteq\psi]$ is given by $\forall x\in X\,.\,\varphi(x)\Rw\psi(x)$. A \emph{module} $\varphi:X\modto Y$ between metric spaces $X=(X,a)$ and $Y=(Y,b)$ can be seen as either
\begin{enumerate}
\item a numerical relation $\varphi:X\relto Y$ satisfying $\varphi\cdot a=\varphi$ and $b\cdot\varphi=\varphi$, or
\item a contraction map $\varphi:X^\op\otimes Y\to[0,\infty]$, or
\item a contraction map $\mate{\varphi}:Y\to PX$.
\end{enumerate}
As before,
\begin{itemize}
\item each contraction map $f:X\to Y$ induces a module $f_*:X\modto Y,\,f_*(x,y)=b(f(x),y)$ and a module $f^*:Y\modto X,\,f^*(y,x)=b(y,f(x))$,
\item the metric $a$ of $X=(X,a)$ is the identity module $X\modto X$ on $X$,
\item which induces the Yoneda embedding $\yoneda_X:X\to PX$ sending $x$ to $x^*$, 
\item the Yoneda lemma states now that $[\yoneda_X(x),\psi]=\psi(x)$, 
\item a metric space is cocomplete whenever $\yoneda_X$ has a left adjoint $\Sup_X:PX\to X$, 
\item the cocomplete metric spaces are precisely the injective ones,
\item the subcategory $\COCTS_\sep$ of cocomplete and separated metric spaces and $\sup$-preserving contraction maps is reflective (in fact, monadic) in $\MET$, and the Yoneda embedding $\yoneda_X:X\to PX$ serves as a reflection map,
\item and so on.
\end{itemize}
An immediate question is now how the important notion of \emph{Cauchy-completeness} fits into this framework. The answer can be found in \citeauthor{Law_MetLogClo}'s \citeyear{Law_MetLogClo} paper where he made the amazing discovery that equivalence classes of Cauchy sequences correspond precisely to right adjoint modules $\psi:X\modto 1$, and a Cauchy sequence converges to $x$ if and only if $x$ is a supremum of the corresponding module. Consequently, $X$ is Cauchy complete if and only if the restriction $\yoneda_X:X\to\tilde{X}$ of the Yoneda embedding to the subspace $\tilde{X}$ of $PX$ defined by all right adjoint modules has a left adjoint in $\MET$. Since $\yoneda_X:X\to\tilde{X}$ is dense (in the usual metric sense), this simply means that $\yoneda_X:X\to\tilde{X}$ is surjective. Furthermore, $\yoneda_X:X\to\tilde{X}$ is a Cauchy completion for any space $X$. It is also worth noting that $\tilde{X}\hrw PX$ is the equaliser of
\begin{align*}
 \xymatrix{PX\ar@<0.7ex>[r]^{P\!\yoneda_X}\ar@<-0.7ex>[r]_{\yoneda_{PX}} & PPX} &\text{ (see also Lemma \ref{LemAdjModEq}).}
\end{align*}
As for ordered sets, one can built a dual adjunction between $\MET$ and $\CDMET$, which restricts to a dual equivalence between the full subcategories of Cauchy complete metric spaces and algebraic metric spaces. The reader has certainly no difficulties in writing down the definitions of completely distributive metric space and consequently of the category $\CDMET$ as well as of algebraic metric space.

\begin{sideremark}\label{CocomplExpMet}
Since $\MET$ is not Cartesian closed one might wonder what the exponentiable objects are. They are characterised in \citep{CH_ExpVCat} as those spaces $X=(X,a)$ where, for all $x,y\in X$, $u+v=a(x,y)$ and $\eps>0$, there exists some $z\in X$ with $a(x,z)\le u+\eps$ and $a(z,y)\le v+\eps$. One easily sees that a cocomplete (=injective) metric space satisfies this property, just consider (with $w=a(x,y)$)
\[
\xymatrix{\{0\xrightarrow{\,w\,}2\}\ar[drr]_f\ar@{>->}[rr] && \{0\xrightarrow{\,u\,}1\xrightarrow{\,v\,}2\}\ar@{..>}[d]^{g}\\ && X}
\]
where $f(0)=x$, $f(2)=y$ and $g(1)$ gives the desired $z\in X$. Furthermore, with $Y$ also $Y^X$ is  cocomplete (=injective), just pass from
\begin{align*}
\xymatrix{A\ar@{>->}[r]\ar[dr] & B\\ & Y^X}
&&\text{to}&&
\xymatrix{X\times A\ar@{>->}[r]\ar[dr] & X\times B\\ & Y.}
\end{align*}
Since the product of cocomplete spaces is also cocomplete, we conclude that the \emph{full} subcategory of $\MET$ defined by all cocomplete spaces is Cartesian closed. This observation contradicts Theorem 2.2 of \citep{Wag_PhD}; however, I believe the proof given there is not correct. 

I do not know yet if the corresponding result for $\V$-categories is true, that is, if a cocomplete $\V$-category is exponentiable in $\Cat{\V}$. In fact, I do not know if the $\V$-category $\V$ is exponentiable in $\Cat{\V}$.
\end{sideremark}

\section{The dual space}\label{SectionDualSpace}

In the remaining sections we will go further and do ``exactly the same'' in $\TOP$ and $\AP$. The first obstacle waits right at the beginning as the fundamental notion of down-set $\psi:X^\op\to\two$ involves the dual ordered set, a concept which has no obvious counterpart in $\TOP$ and $\AP$.\footnote{At this point one might ask why we do not consider completeness and consequently up-sets $\varphi:X\to\two$. But this creates even bigger problems as we have to deal then with the exponential $\two^X$ which in general does not exists in $\TOP$ and $\AP$. Furthermore, we would then like to talk about weighted limits (dual of \ref{wcol}) which involves lifting of modules, another problematic operation in the the realm of topological and approach spaces.}

Clearly, one cannot directly dualise the convergence relation $\frx\to x$ of a topological space to ``$x\to\frx$'', it is necessary to move into a more symmetric environment. Our experience shows so far that a good candidate for this are \citeauthor{Nach_TopOrd}'s ordered compact Hausdorff spaces as well as its metric counterparts. Here an \emph{ordered compact Hausdorff space} is a triple $(X,\le,\alpha)$ where $(X,\le)$ is an ordered set and $\alpha$ is (the convergence relation of) a compact Hausdorff topology on $X$ so that $\{(x,y)\mid x\le y\}$ is closed in $X\times X$. We emphasise again that we do not assume the order relation to be anti-symmetric. A map $f:X\to Y$ between ordered compact Hausdorff spaces is a homomorphism if it is both monotone and continuous, and the resulting category we denote as $\ORDCH$. It is shown in \citep{Fla97a} that the full subcategory $\ORDCH_\sep$ of $\ORDCH$ defined by the objects with anti-symmetric order is the category of Eilenberg-Moore algebras for the prime filter monad (of up-sets) $\mPf$ on $\ORD$, and the ``non-separated'' version of this result can be found in \citep{Tho_OrderedTopStr} with the prime filter monad substituted by the ultrafilter monad. Based on its extension to $\REL$, the ultrafilter monad $\mU=\umonad$ on $\SET$ extends to a monad on $\ORD$ where 
$U:\ORD\to\ORD$ sends $(X,\le)$ to $(UX,U\!\!\le)$, and with this definition $e_X$ and $m_X$ are monotone maps. Then, by Remark \ref{RemarkXi}, $\{(x,y)\mid x\le y\}$ is closed in $X\times X$ if and only if $\alpha:U(X,\le)\to(X,\le)$ is monotone. Therefore the category $\ORDCH$ of ordered compact Hausdorff spaces and continuous monotone maps is precisely the Eilenberg-Moore category $\ORD^\mU$. For each ordered set $X$ there is a canonical map $\rho_X:UX\twoheadrightarrow BX,\,\frx\mapsto\{\upc A\mid A\in\frx\}$ which turns out to be the $X$-component of a monad morphism $\rho:\mU\to\mPf$. It is shown in \citep[Lemma 5]{Fla97a} that $\rho_X$ is even surjective, and one easily verifies that $\rho_X(\frx)\le\rho_X(\frx')\iff \frx\le\frx'$. Hence, $\rho_X:UX\twoheadrightarrow BX$ is the anti-symmetric reflection of $UX$, and  composition with $\rho$ induces the inclusion functor $\ORDCH_\sep\to\ORDCH$. As a byproduct of this discussion we obtain a notion of \emph{metric compact Hausdorff spaces} as the Eilenberg-Moore algebras for the extension of $\mU$ to $\MET$ based on its extension to numerical relations, that is, $\METCH=\MET^\mU$. However, in Section \ref{SectionExamples} we will see that the notion of primer filter has at least two metric counterparts.

\begin{sideremark}
This is the place where we have to take serious that the order on hom-sets of $\ORD$ and $\MET$ is not anti-symmetric. The functor $U$ does not restrict to an endofunctor on $\ORD_\sep$ respectively $\MET_\sep$. For instance, the order relation of $U\N$ is not anti-symmetric, where $\N$ has the natural order. To see this, just take $\frx\in UX$ such that each $A\in\frx$ contains arbitrary large odd numbers, and $\fry\in UX$ such that each $B\in\fry$ contains arbitrary large even numbers. Then $\frx\le\fry$ and $\fry\le\frx$, but $\frx$ can be chosen different from $\fry$. This begs the question if it would be more ``natural'' to consider pseudo-algebras instead. 
\end{sideremark}

One has canonical forgetful functors
\begin{align*}
K:\ORDCH\to\TOP &&\text{and}&& K:\METCH\to\AP,
\end{align*}
both send $(X,a_0,\alpha)$ to $(X,a_0\cdot\alpha)$ where $a_0$ is either an order relation or a metric. 

\begin{examples}\label{Sierpinski}
The ordered set $\two=\{0,1\}$ with the discrete (compact Hausdorff) topology lives in $\ORDCH$ and gives us the Sierpi\'nski space $\two$ where $\{1\}$ is closed and $\{0\}$ is open. The metric space $[0,\infty]$ with distance $\delta(x,y)=\max\{y-x,0\}$ equipped with the usual compact Hausdorff topology where $\frx$ converges to $\xi(\frx):=\sup_{A\in\frx}\inf A$ is a metric compact Hausdorff space which gives the usual approach structure $\lambda(\frx,x)=x-\xi(\frx)$ on $[0,\infty]$.
\end{examples}

Both forgetful functors have a left adjoint
\begin{align*}
M:\TOP\to\ORDCH &&\text{respectively}&& M:\AP\to\METCH
\end{align*}
which sends $X=(X,a)$ to $(UX,Ua\cdot m_X^\circ,m_X)$. For a topological space $X=(X,a)$, the order relation 
\[
UX\xrelto{\,m_X^\circ\,}UUX\xrelto{\,Ua\,}UX
\]
is described by
\[
 \frx\le\fry\hspace{2em}\text{whenever }\overline{A}\in\fry\text{ for every }A\in\frx.
\]
For an approach space $X=(X,a)$, the metric $UX\xrelto{\,m_X^\circ\,}UUX\xrelto{\,Ua\,}UX$ gives
\[
\inf\{\eps\mid \forall A\in\frx\,.\,\overline{A}^{(\eps)}\in\fry\}
\]
as distance from $\frx$ to $\fry$. We define now $(-)^\op:\TOP\to\TOP$ and $(-)^\op:\AP\to\AP$ by
\begin{align*}
\xymatrix{\TOP\ar[d]_M\ar@{..>}[r]^{(-)^\op} & \TOP\\ \ORDCH\ar[r]_{(-)^\op} &\ORDCH\ar[u]_K}
&&\text{and}&&
\xymatrix{\AP\ar[d]_M\ar@{..>}[r]^{(-)^\op} & \AP\\ \METCH\ar[r]_{(-)^\op} &\METCH\ar[u]_K}
\end{align*}
where in the lower row one dualises only the order respectively metric.

\begin{examples}
By  definition, an ultrafilter $\frX\in UUX$ of ultrafilters converges to $\frx\in UX$ in $X^\op$ whenever $\frx\le m_X(\frX)$, which is equivalent to $A^\#\in \frX$ for each closed set $A\in\frx$. From this one obtains that all sets $A^\#$ for $A\subseteq X$ closed form a basis for the topology on $X^\op$. In this sense, we dualise $X$ by making the closed subsets of $X$ open. A continuous map $\psi:X^\op\to\two$ can be identified with a closed subset $\calA\subseteq UX$, where $\calA\subseteq UX$ is closed if and only if $\calA$ is Zariski closed (i.e.\ closed for the compact Hausdorff topology $m_X$ on $UX$) and down-closed (with respect to the order $\le$ on $UX$).
\end{examples}

As it is well-known, both $\TOP$ and $\AP$ are not Cartesian closed. However, the topological space $X^\op$ turns out to be exponentiable in $\TOP$ and it does not matter that $X^\op$ is in general not exponentiable in $\AP$ since what we need is a right adjoint of $X^\op\otimes-$ which does exist. As in the metric case, we consider here the $+$-approach structure rather then the $\max$-structure on the product space. We recall from \citep{Pis_ExpTop}/\citep{Hof_TopTh} that a topological/approach space $X=(X,a)$  is exponentiable/$+$-exponentiable if and only if the diagram
\[
\xymatrix{UUX\ar[r]^{m_X}\ar[d]|-{\object@{|}}_{Ua} & UX\ar[d]|-{\object@{|}}^a\\
 UX\ar[r]|-{\object@{|}}_a & X}
\]
commutes. 
\begin{proposition}\label{PropTensExp}
For each ordered compact Hausdorff space $X$, $KX$ is exponentiable in $\TOP$. Likewise, for each metric compact Hausdorff space, $KX$ is $+$-exponentiable in $\AP$.
\end{proposition}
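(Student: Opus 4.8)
The plan is to verify directly the criterion recalled just above the statement: a topological (resp.\ approach) space $(X,a)$ is exponentiable (resp.\ $+$-exponentiable) if and only if $a\cdot Ua=a\cdot m_X$. One of the two inequalities, $a\cdot Ua\sqsubseteq a\cdot m_X$, is precisely the associativity condition in \eqref{AxiomsLaxAlg} and hence holds for \emph{any} space; so the whole content is to prove the reverse inequality $a\cdot m_X\sqsubseteq a\cdot Ua$ when $a$ is the convergence (resp.\ approach) structure of $KX$.

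Here I would exploit the explicit form of $K$: the structure of $KX$ factors as $a=a_0\cdot\alpha$, where $\alpha\colon UX\to X$ is the compact Hausdorff part of $X$---that is, the Eilenberg--Moore structure exhibiting $X$ as an algebra for $\mU$ on $\ORD$ (resp.\ on $\MET$)---and $a_0$ is the order (resp.\ metric) part. The algebra associativity law provides the identity of maps $\alpha\cdot m_X=\alpha\cdot U\alpha$, which we may read as an identity of (numerical) relations since $U\colon\REL\to\REL$ (resp.\ $U\colon\NREL\to\NREL$) extends the set-level functor; therefore
\[
a\cdot m_X=a_0\cdot\alpha\cdot m_X=a_0\cdot\alpha\cdot U\alpha.
\]
Now $a_0$ is the identity module on the underlying order (resp.\ metric), so in particular $1_X\sqsubseteq a_0$, whence $\alpha=1_X\cdot\alpha\sqsubseteq a_0\cdot\alpha=a$; applying the (monotone) extended functor $U$ yields $U\alpha\sqsubseteq Ua$, and consequently
\[
a_0\cdot\alpha\cdot U\alpha\sqsubseteq a_0\cdot\alpha\cdot Ua=a\cdot Ua .
\]
Combining the two displays gives $a\cdot m_X\sqsubseteq a\cdot Ua$, and since the hom-orders of $\REL$ and $\NREL$ are anti-symmetric, this upgrades to the equality $a\cdot m_X=a\cdot Ua$, i.e.\ the required commutativity.

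The approach case is the very same computation performed in $\NREL$ with $\mU$ extended to numerical relations; the only point to keep straight is that there ``$\sqsubseteq$'' is the \emph{reversed} pointwise order, so that $1_X\sqsubseteq a_0$ reads as the reflexivity axiom $0\geqslant a_0(x,x)$ and monotonicity of the extension is used as ``$r\sqsubseteq s\Rightarrow Ur\sqsubseteq Us$''. I do not anticipate a genuine obstacle here: the one substantive input beyond bookkeeping is that $KX$ comes from an honest Eilenberg--Moore algebra, which lets us invoke the associativity \emph{equation} $\alpha\cdot m_X=\alpha\cdot U\alpha$ rather than only the lax inequality $a\cdot Ua\sqsubseteq a\cdot m_X$ available for a general space.
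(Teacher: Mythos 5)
Your proposal is correct and follows essentially the same route as the paper: write $a=a_0\cdot\alpha$, use the strict Eilenberg--Moore identity $\alpha\cdot m_X=\alpha\cdot U\alpha$ together with reflexivity of $a_0$ and monotonicity of the extended functor $U$ to get the nontrivial inequality, and invoke the lax-algebra axiom for the other. The only difference is cosmetic bookkeeping (you insert $a_0$ before applying $U$, while the paper expands $Ua=Ua_0\cdot U\alpha$ and drops $Ua_0$), so the two arguments coincide in substance.
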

\begin{proof}
Let $X=(X,a_0,\alpha)$ be in $\ORDCH$ or $\METCH$. We have to show that $a:=a_0\cdot\alpha$ satisfies $a\cdot Ua\sqsupseteq a\cdot m_X$ (since the other inequality holds anyway), where $\sqsubseteq$ stands either for $\subseteq$ or $\geqslant$. But this follows easily:
\[
a\cdot Ua=a_0\cdot\alpha\cdot U(a_0)\cdot U\alpha\sqsupseteq a_0\cdot\alpha\cdot U\alpha=a_0\cdot\alpha\cdot m_X=a\cdot m_X.\qedhere
\]
\end{proof}

\begin{corollary}\label{Xopexp}
For each topological (approach) space $X$, $X^\op$ is ($+$-)exponentiable.
\end{corollary}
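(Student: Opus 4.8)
The plan is to notice that $X^\op$ is, by its very construction, an object of the form $KY$ with $Y$ an ordered, respectively metric, compact Hausdorff space, so that the corollary becomes an immediate instance of Proposition~\ref{PropTensExp}.

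First I would unwind the definition of $(-)^\op$ on $\TOP$ and $\AP$ encoded in the two commuting squares just above Proposition~\ref{PropTensExp}: it says precisely that $X^\op=K\bigl((MX)^\op\bigr)$, where $M\colon\TOP\to\ORDCH$ (respectively $M\colon\AP\to\METCH$) is the left adjoint of $K$, and the $(-)^\op$ on the right is the functor on $\ORDCH$ (respectively $\METCH$) that reverses the order (respectively the metric) while keeping the underlying compact Hausdorff topology. One small point that deserves a line is that this is indeed an endofunctor of $\ORDCH$ (respectively $\METCH$), i.e.\ that reversing the order (metric) preserves compatibility with the compact Hausdorff topology; for $\ORDCH$ this holds because the graph of the reversed order is the image of the closed graph of $\le$ under the twist homeomorphism of $Z\times Z$, and the metric case follows likewise from the diagrammatic description of that compatibility recalled in Remark~\ref{RemarkXi}. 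Hence $(MX)^\op$ is a genuine object of $\ORDCH$ (respectively $\METCH$).

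Then I would simply apply Proposition~\ref{PropTensExp} with $(MX)^\op$ in place of $X$: it gives that $K\bigl((MX)^\op\bigr)$ is exponentiable in $\TOP$ (respectively $+$-exponentiable in $\AP$), and this space is exactly $X^\op$. There is no real obstacle here; the entire content sits in Proposition~\ref{PropTensExp}, which itself reduces — via $a=a_0\cdot\alpha$ and $\alpha\cdot U\alpha=\alpha\cdot m_X$ — to the defining axioms of an ordered (metric) compact Hausdorff space.
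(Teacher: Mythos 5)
Your proposal is correct and is exactly the paper's intended argument: the corollary is stated without further proof precisely because $X^\op=K\bigl((MX)^\op\bigr)$ is by construction of the form $KY$ for $Y$ in $\ORDCH$ (resp.\ $\METCH$), so Proposition~\ref{PropTensExp} applies directly. Your extra remark that $(-)^\op$ really lands in $\ORDCH$/$\METCH$ (which boils down to $U(r^\circ)=(Ur)^\circ$, so $\alpha$ stays monotone/contractive for the reversed order/metric) is a reasonable point the paper leaves implicit.
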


\begin{sideremark}
Clearly, both $\ORD^\mU$ and $\MET^\mU$ inherit products from $\ORD$ and $\MET$ respectively. However, more important to us is the monoidal structure on $\MET$ defined by the plus-metric, and therefore we are interested in transporting this structure to $\MET^\mU$. This problem is addressed in general in \citep{Moe_Tensor} where the author introduces the notion of a \emph{Hopf monad} on a monoidal category $\catfont{C}$, which captures exactly what is needed to transport the monoidal structure on $\catfont{C}$ to the category of Eilenberg--Moore algebras. By space reasons we must refer to \citep{Moe_Tensor} for the definition of Hopf monad, and simply state here that the monad $\mU=\umonad$ on $\MET$ is an example of a monad with a Hopf structure since
\begin{align*}
\tau_{X,Y}:U(X\otimes Y) & \to UX\otimes UY,\,\frw \mapsto(T\pi_1(\frw),T\pi_2(\frw))
& !:U1 &\to 1
\end{align*}
are contraction maps. This is clear for the second map, and for the first one it follows using Remark \ref{RemarkXi}. Consequently, $\MET^\mU$ inherits the monoidal structure from $\MET$: for $X=(X,a,\alpha)$ and $Y=(Y,b,\beta)$, $X\otimes Y$ becomes equipped with the plus-metric $a\otimes b$ and the product topology $U(X\times Y)\xrightarrow{\tau_{X,Y}}UX\times UY\xrightarrow{\alpha\times\beta}X\times Y$. Recall from Example \ref{Sierpinski} that $[0,\infty]$ lives in $\MET^\mU$, and it is now clear that $+:[0,\infty]\otimes[0,\infty]\to[0,\infty]$ is a $\mU$-homomorphism. We also remark that $K:\MET^\mU\to\AP$ is a strict monoidal functor.
\end{sideremark}

\begin{sideremark}
In \citep{Sim82a,Wyl84a} it is shown that $\ORDCH_\sep$ is also monadic over $\TOP$ where the monad is the prime filter (of opens) monad. Similarly, the adjunction $M\dashv K$ induces a monad on $\TOP$ respectively $\AP$, in fact, it extends the ultrafilter monad $\mU=\umonad$ to these categories. Moreover, the monad $\mU$ on $\TOP$ as well on $\AP$ is of Kock-Z\"oberlein type, which tells us that a topological/approach space is an Eilenberg-Moore algebra precisely if $e_X:X\to UX$ admits a retract $l_X:UX\to X$ (i.e.\ $l_X\cdot e_X=1_X$) in $\TOP$/$\AP$, and then $l_X$ is left adjoint to $e_X$ (see Remark \ref{KZ}). For $X=(X,a_0,\alpha)$ in $\ORDCH$ or $\METCH$, $\alpha:UX\to X$ turns out to be cont(inuous/ractive), hence our functors $K:\ORD^\mU\to\TOP$ and $\MET^\mU\to\AP$ can be seen as functors $\ORD^\mU\to\TOP^\mU$ and $\MET^\mU\to\AP^\mU$ respectively. On the other hand, for $X=(X,a)$ in $\TOP^\mU$ or $\AP^\mU$, the underlying ordered set $(X,a_0)$ together with the left adjoint $l_X$ of $e_X$ lives in $\ORD^\mU$/$\MET^\mU$. By definition, $l_X\dashv e_X$ in $\TOP$ respectively $\AP$ and consequently in $\ORD$ respectively in $\MET$, and one observes that the underlying order/metric of $UX$ is given by $r=Ua\cdot m_X^\circ$. From
\[
a_0(l_X(\frx),x)=r(\frx,e_X(x))=a(\frx,x)
\]
one reaches eventually at the conclusion that $\TOP^\mU\cong\ORD^\mU$ and $\AP^\mU\cong\MET^\mU$. In particular, it is a property of an approach space to come from a metric compact Hausdorff space (the corresponding result for topological spaces is well-known). Finally, one easily verifies that the ultrafilter monad $\mU$ on $\AP$ is a Hopf monad witnessed by the maps $\tau_{X,Y}$ and $!$ described above.\\

\end{sideremark}

\section{Cocomplete spaces}\label{SectCocomplSp}

With the notion of dual space at our disposal, one can now introduce \emph{$\mU$-modules} between topological spaces and approach spaces and develop their basic properties. We emphasise that everything goes exactly as for ordered sets, only the Yoneda lemma is technically more demanding. For topological spaces $X=(X,a)$ and $Y=(Y,b)$, a $\mU$-module $\varphi:X\kmodto Y$ is a $\mU$-relation $\varphi:X\krelto Y$ so that $X^\op\times Y\to\two$ is continuous; and for approach spaces $X=(X,a)$ and $Y=(Y,b)$, a $\mU$-module $\varphi:X\kmodto Y$ is a $\mU$-relation $\varphi:X\krelto Y$ so that $X^\op\otimes Y\to[0,\infty]$ is contractive. By Corollary \ref{Xopexp}, $\mU$-modules correspond to cont(inuous/ractive) maps $\mate{\varphi}:Y\to PX$, where $PX:=\two^{X^\op}$ in the topological case and $PX:=[0,\infty]^{X^\op}$ in the approach case. It is not completely trivial that the module-property can be also expressed with the help of Kleisli composition, but it is indeed true that a $\mU$-relation $\varphi:X\krelto Y$ is a $\mU$-module if and only if $b\kleisli\varphi=\varphi$ and $\varphi\kleisli a=\varphi$ (see \citep{CH_Compl}). This correspondence will be particularly useful when establishing cont(inuity/ractivity) of a map of type $Y\to PX$ as it is occasionally easier to verify these two equalities.

\begin{sideremark}
It should be noted that dual space considered in this notes is different from what was considered in \citep{CH_Compl,HT_LCls,Hof_Cocompl,CH_CocomplII}, the two ingredients of an ordered/metric compact Hausdorff space were considered separately there. Since the presheaf space $PX$ there is defined as a subspace of the exponential with respect to the compact Hausdorff topology only, it is not automatically clear that this gives the same presheaf space. The following observation tells us that there is no problem:

\noindent
\textsc{Fact:} For any $(X,a_0,\alpha)$ in $\ORDCH$ or $\METCH$ and any $Y$ in $\TOP$ respectively $\AP$, the exponential $Y^{(X,a_0\cdot\alpha)}\to Y^{(X,\alpha)}$ of $(X,\alpha)\to(X,a_0\cdot\alpha)$ is an embedding.

\noindent To prove this, we recall that the function space structure on $Y^X$ (with $Y=(Y,b)$ and $X=(X,a)$) is defined as the largest one making the evaluation map $\ev:Y^X\times X\to Y$ (respectively $\ev:Y^X\otimes X\to Y$ in the approach case) cont(inuous/ractive). Explicitly, for $\frp\in U(Y^X)$ and $h\in Y^X$, one has
\[
\frp\to h\iff \text{for all } \frw\in U(Y^X\times X)\text{ with }\frw\mapsto\frp\text{ and all }x\in X,\,(\frx\to x\,\Rw\, U\!\ev(\frw)\to h(x))
\hspace{1em}(\text{where }\frw\mapsto\frx)
\]
in the topological case and
\[
d(\frp,h)=\sup\{b(U\!\ev(\frw),h(x))-a(\frx,x)\mid \frw\in U(Y^X\otimes X)\text{ with }\frw\mapsto\frp, x\in X, (\frw\mapsto\frx)\}
\]
in the approach case. Now, in $Y^{(X,\alpha)}$ one has
\[
d_2(\frp,h)=\sup\{b(U\!\ev(\frw),h(\alpha(\frx)))\mid \frw\in U(Y^X\otimes X)\text{ with }\frw\mapsto\frp, (\frw\mapsto\frx)\},
\]
and in $Y^{(X,a_0\cdot\alpha)}$
\[
d_1(\frp,h)=\sup\{b(U\!\ev(\frw),h(x))-a_0(\alpha(\frx),x)\mid \frw\in U(Y^X\otimes X)\text{ with }\frw\mapsto\frp, x\in X, (\frw\mapsto\frx)\}.
\]
To conclude $d_1(\frp,h)\leqslant d_2(\frp,h)$, we show that
\[
b(U\!\ev(\frw),h(\alpha(\frx)))\geqslant b(U\!\ev(\frw),h(x))-a_0(\alpha(\frx),x)
\]
for any $x\in X$. In fact, the inequality above is equivalent to
\[
b(U\!\ev(\frw),h(\alpha(\frx)))+a_0(\alpha(\frx),x)\geqslant b(U\!\ev(\frw),h(x)),
\]
which follows from
\[
b(U\!\ev(\frw),h(\alpha(\frx)))+a_0(\alpha(\frx),x)\geqslant b(U\!\ev(\frw),h(\alpha(\frx)))+b_0(h(\alpha(\frx)),h(x))\geqslant b(U\!\ev(\frw),h(x)).
\]
Here $b_0$ denotes the underlying metric of the approach structure $b$ on $Y$. For topological spaces one can argue in a similar way.

Consequently, the function space $PX$ is essentially the exponential of a compact Hausdorff space, therefore its topology is the compact-open topology. An approach variant of this topology was introduced by \citeauthor{LS_MonClosed} in \citeyear{LS_MonClosed}.
\end{sideremark}

\begin{example}\label{Psh_vs_Filter}
In \citep{HT_LCls} it is shown that the topological space $PX$ is homeomorphic to the space $F_0(X)$ of all filters (including the improper one) on the lattice $\tau$ of open sets of X, where the topology on $F_0(X)$ has 
\begin{align*}
\{\frf\in F_0(X)\mid A\in\frf\}&& \text{($A\subseteq X$ open)}
\end{align*}
as basic open sets (see \citep{Esc_InjSp}). Here we can identify an element $\psi\in PX=\two^{X^\op}$ with a closed (=Zariski and down-closed) subset $\calA$ of $UX$. With this identification, the maps
\begin{align*}
PX\xrightarrow{\;\Phi\;} F_0(X),\,\calA\mapsto \bigcap\calA\cap\tau&&\text{and}&&
F_0(X)\xrightarrow{\;\Pi\;} PX,\,\frf\mapsto\{\frx\in UX\mid \frf\subseteq\frx\}
\end{align*}
are indeed continuous and inverse to each other.
\end{example}

Consequently, the structure $a$ of a space $X=(X,a)$ is a $\mU$-module $X\kmodto X$ and indeed the identity arrow on $X$ in the ordered category $\Mod{\mU}$ of topological/approach spaces and $\mU$-modules between them, composition is given by Kleisli-composition and the order structure is inherited from $\REL$ respectively $\NREL$. Each cont(inuous/ractive) map $f:X\to Y$ gives rise to $\mU$-modules
\begin{align*}
f_*:X\kmodto Y,\,f_*(\frx,y)=b(Uf(\frx),y)
&&\text{and}&&&
f^*:Y\kmodto X,\,f^*(\fry,x)=b(\fry,f(x))\\
f_*=b\cdot Uf &&&&& f^*=f^\circ\cdot b
\end{align*}
which form an adjunction $f_*\dashv f^*$ in $\Mod{\mU}$, and these constructions define functors $(-)_*:\TOP\to\Mod{\mU}$ and $(-)^*:\TOP^\op\to\Mod{\mU}$ respectively  $(-)_*:\AP\to\Mod{\mU}$ and $(-)^*:\AP^\op\to\Mod{\mU}$. The ``order on hom-sets'' in $\TOP$ and $\AP$ are reflections from their respective module categories as
\[
f\le h\iff f^*\sqsubseteq h^*\iff h_*\sqsubseteq f_*.
\]
From this follows that $f\dashv g$ in $\TOP$/$\AP$ if and only if $g^*\dashv f^*$ in $\Mod{\mU}$ if and only if $g^*=f_*$, which in pointwise notation reads as
\[
b(Uf(\frx),y)=a(\frx,g(y)),
\]
or, in the particular case of topological spaces, as
\[
Uf(\frx)\to y\iff \frx\to g(y).
\]

The ordered category $\Mod{\mU}$ has (co)complete hom-sets, and Kleisli-composition with a $\mU$-module $\varphi:X\kmodto Y$ from the right preserves suprema. As in the case of ordered sets, a right adjoint to $-\kleisli\varphi$ gives, for each $\psi:X\kmodto Z$, the largest $\mU$-module of type $Y\kmodto Z$ which composite with $\varphi$ is less or equal then $\psi$:
\begin{equation}\label{ExtUMod}
\xymatrix{X\ar@{-^>}[r]|-{\object@{o}}^\psi\ar@{-^>}[d]|-{\object@{o}}_\varphi & Z\\ Y\ar@{..^>}[ur]|-{\object@{o}}^\sqsubseteq}
\end{equation}
This $\mU$-module is called \emph{extension of $\psi$ along $\varphi$}, and we write $\psi\whiteleft\varphi$. It can be calculated in $\REL$ respectively $\NREL$ as $\psi\blackleft (U\varphi\cdot m_X^\circ)$. However, in the sequel it will not be necessary to remember how  $\psi\whiteleft\varphi$ is computed neither one needs to recall the structure $\fspstr{-}{-}$ on $PX$, as long as one believes in

\begin{theorem}[\citep{Hof_Cocompl}]\label{TheoremGenYoneda}
$\psi\whiteleft\varphi(\fry,z)=\fspstr{U\mate{\varphi}(\fry)}{\mate{\psi}(z)}$.
\end{theorem}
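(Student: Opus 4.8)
The plan is to unwind both sides of the claimed identity into their defining formulas in $\NREL$ (the approach case; the topological case follows by the interpretation $\true=0$, $\false=\infty$) and show they agree pointwise. Recall that, by definition, $\psi\whiteleft\varphi = \psi\blackleft(U\varphi\cdot m_X^\circ)$, where $\blackleft$ is the ordinary extension operation in $\NREL$ computed by the formula analogous to \eqref{formula_extension}; and that for a $\mU$-module $\varphi:X\kmodto Y$ the mate $\mate{\varphi}:Y\to PX$ sends $y$ to the down-set $y^*\kleisli\varphi = \varphi(-,y)$ suitably ``closed up''. So the right-hand side $\fspstr{U\mate{\varphi}(\fry)}{\mate{\psi}(z)}$ is the distance in $PX$ between the point $U\mate{\varphi}(\fry)\in U(PX)$ transported into $PX$ and the point $\mate{\psi}(z)\in PX$ — except that it is written using the hom of $PX$ evaluated on an element of $U(PX)$, i.e.\ it secretly involves the convergence structure of $PX$. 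The first step is therefore to make precise what $\fspstr{-}{-}:U(PX)\times PX\to[0,\infty]$ means: it is the $\mU$-category structure on $PX$, namely $c_{PX} = (\yoneda_X)_* \whiteleft \mathrm{id}$ — or more usefully, it is characterized by the Yoneda lemma $\fspstr{\yoneda_X(x)}{\psi} = \psi(x)$ together with the fact that $PX$ is the free cocompletion.

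Next I would compute the left-hand side directly. Expanding $\psi\blackleft(U\varphi\cdot m_X^\circ)$ via the $\NREL$-extension formula gives
\[
\psi\whiteleft\varphi(\fry,z) \;=\; \sup_{\frX\in UUX,\; m_X(\frX)=\cdot} \bigl(\,\psi(\cdot,z) \,\ominus\, U\varphi(\frX,\fry)\,\bigr),
\]
i.e.\ a supremum over all ultrafilters $\frX$ of ultrafilters lying over the relevant fibres, of the truncated difference between $\psi$-values and $U\varphi$-values. The point is to recognize this expression as exactly the sup-formula for the distance $\fspstr{-}{-}$ in $PX$ evaluated at $(U\mate{\varphi}(\fry), \mate{\psi}(z))$: the ultrafilter $U\mate{\varphi}(\fry)$ on $PX$ is the image under $U\mate{\varphi}:UY\to U(PX)$, and unwinding $U$ applied to the map $\mate{\varphi}$ reproduces precisely the quantification over $\frX$ with the prescribed $m_X$-image, because $\mate{\varphi}(y) = \varphi(-,y)$ and $U$ of an evaluation-type map is computed by Remark~\ref{RemarkXi} as an infimum/supremum over ultrafilters in the total space. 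Matching the two expressions term by term — the $U\varphi(\frX,\fry)$ factor against the ``$U(PX)$-component'' and $\psi(\cdot,z)$ against $\mate{\psi}(z)$ — yields the identity.

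The main obstacle, and the place where real work is needed, is precisely this matching step: verifying that the convergence structure on $PX$ (equivalently, the $\mU$-category structure $\fspstr{-}{-}$, which a priori is defined via the compact-open / compact-Hausdorff exponential as in the ``Fact'' side remark) is computed by the same sup-over-$\frX$ formula that comes out of the Kleisli-extension. This is essentially the content of the generalized Yoneda lemma and is not formal: it requires knowing the explicit description of the function-space structure on $PX = \two^{X^\op}$ (resp.\ $[0,\infty]^{X^\op}$), using that $X^\op$ is exponentiable (Corollary~\ref{Xopexp}) so that $U$ commutes appropriately with the relevant products, and using the compatibility of $\xi$ with $+$ and $\hom$ recorded in Remark~\ref{RemarkXi} to push the ultrafilter $U\mate{\varphi}(\fry)$ through the evaluation. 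I would structure this as: (1) reduce $\fspstr{U\mate{\varphi}(\fry)}{\mate{\psi}(z)}$ to a supremum over $\frw\in U(PX\otimes X)$ lying over $U\mate{\varphi}(\fry)$ via the definition of the exponential structure; (2) transport such $\frw$ back along $\mate{\varphi}$ to ultrafilters in the domain, using that $\mate{\varphi}$ is itself continuous/contractive (Corollary~\ref{Xopexp}); (3) identify the resulting indexing set with the $\frX\in UUX$ over the right fibre appearing in $\psi\blackleft(U\varphi\cdot m_X^\circ)$; (4) check the scalar identity $b(\ev(\frw),\mate{\psi}(z)) \ominus a(\frx, x) = \psi(\cdot,z)\ominus U\varphi(\frX,\fry)$ under this correspondence, which reduces to the Yoneda lemma $\fspstr{\yoneda_X(x)}{\mate{\psi}(z)} = \mate{\psi}(z)(x)$ and the defining equations $b\kleisli\varphi = \varphi$, $\varphi\kleisli a=\varphi$ of a $\mU$-module. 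The topological case is the same argument with $\sup/\ominus$ read off as the corresponding logical operations.
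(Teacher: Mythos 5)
There is nothing in the paper to compare against: Theorem~\ref{TheoremGenYoneda} is imported from \citep{Hof_Cocompl} without proof (the text explicitly asks the reader to ``believe in'' it), so your attempt can only be judged on its own merits. On those merits it is a plan rather than a proof: the step you yourself single out as ``the main obstacle'' --- matching the exponential structure of $PX$ with the formula $\psi\blackleft(U\varphi\cdot m_X^\circ)$ --- \emph{is} the content of the theorem, and it is deferred rather than carried out. The first half of the plan (rewriting the left-hand side as $\sup_{\frX\in UUX}\bigl(\psi(m_X(\frX),z)\ominus U\varphi(\frX,\fry)\bigr)$) is fine, but the outline of the second half would not go through as described.

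Concretely: (i) you work with $\frw\in U(PX\otimes X)$ and a ``scalar identity'' $b(\ev(\frw),\mate{\psi}(z))\ominus a(\frx,x)$, which does not typecheck; the underlying set of $X^\op$ is $UX$, so the function-space formula for $\fspstr{\frp}{h}$ quantifies over $\frw\in U(PX\otimes X^\op)$ and points $\frx\in UX$, the relevant term being $\bigl(h(\frx)\ominus\xi\cdot U\!\ev(\frw)\bigr)\ominus a^\op(U\pi_2(\frw),\frx)$. (ii) Reducing that step ``to the Yoneda lemma'' is circular in this development, since Lemma~\ref{LemmaYoneda} is obtained as a special case of the very theorem being proved. (iii) The proposed mechanism for the matching --- transporting $\frw$ back along $\mate{\varphi}$ ``using that $\mate{\varphi}$ is continuous/contractive'' --- is not a valid tool: contractivity of a map does not let you lift ultrafilters along it, and in fact no such lifting is needed. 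What the two inequalities actually require is, for $\fspstr{U\mate{\varphi}(\fry)}{\mate{\psi}(z)}\leqslant(\psi\whiteleft\varphi)(\fry,z)$, a direct-image argument on ultrafilter bases (for $B\in\fry$ and $\calA\in U\pi_2(\frw)$ one has $\pi_1^{-1}(\mate{\varphi}(B))\cap\pi_2^{-1}(\calA)\in\frw$, whence $\xi\cdot U\!\ev(\frw)\geqslant U\varphi(U\pi_2(\frw),\fry)$) together with the module law $\psi\kleisli a=\psi$ to absorb the $a^\op$-term; and, for the reverse inequality, for each $\frX\in UUX$ a witness $\frw$ over $(U\mate{\varphi}(\fry),\frX)$ obtained by pushing forward some $\frv\in U(UX\times Y)$ with $\xi\cdot U\varphi(\frv)\leqslant U\varphi(\frX,\fry)+\eps$ (reading $\varphi$ as a function $UX\times Y\to[0,\infty]$ as in Remark~\ref{RemarkXi}; an $\eps$-argument is needed since that infimum need not be attained), taking $\frx=m_X(\frX)$ so that the $a^\op$-term vanishes. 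None of these ingredients appears in your sketch, so as it stands the proposal has a genuine gap at its central step.
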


Since the structure $a$ of $X=(X,a)$ is a $\mU$-module $X\kmodto X$, we obtain as its mate the \emph{Yoneda embedding} $\yoneda_X=\mate{a}:X\to PX$ which sends $x$ to $x^*=a(-,x)$. Choosing in \eqref{ExtUMod} $\varphi$ as the identity module and $\psi:X\kmodto 1$, the theorem above specialises to the Yoneda

\begin{lemma}\label{LemmaYoneda}
$\fspstr{U\!\yoneda_X(\frx)}{\psi}=\psi(\frx)$.
\end{lemma}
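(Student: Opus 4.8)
The plan is to derive the Yoneda Lemma (Lemma \ref{LemmaYoneda}) as a direct specialisation of the general formula in Theorem \ref{TheoremGenYoneda}, using only two inputs: the computation of the extension along the identity module, and the observation that $\mate{a}=\yoneda_X$. First I would recall that for $X=(X,a)$ the structure $a$ is the identity arrow $X\kmodto X$ in $\Mod{\mU}$, so for any $\mU$-module $\psi:X\kmodto 1$ we have $\psi\whiteleft a=\psi$ (this is exactly the left-unit law $\psi\kleisli a = \psi$ recorded just before, together with the fact that $-\whiteleft a$ is right adjoint to $-\kleisli a = \mathrm{id}$, hence is the identity). Now apply Theorem \ref{TheoremGenYoneda} with $\varphi:=a$: the left-hand side is $(\psi\whiteleft a)(\frx,\ast) = \psi(\frx)$, while the right-hand side is $\fspstr{U\mate{a}(\frx)}{\mate{\psi}(\ast)}$. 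Since $\mate{a}=\yoneda_X$ by definition of the Yoneda embedding, and since the mate of a module $\psi:X\kmodto 1$ into $PX$ is, by the identification of down-sets with modules of type $X\kmodto 1$, the module $\psi$ itself viewed as an element of $PX$, the right-hand side reads $\fspstr{U\yoneda_X(\frx)}{\psi}$. Equating the two sides gives $\fspstr{U\yoneda_X(\frx)}{\psi}=\psi(\frx)$, which is the claim.

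The main point requiring care is the bookkeeping of the mates when $Z=1$: one must check that $\mate{\psi}\colon 1\to PX$ really is ``$\psi$ itself'', i.e.\ that under the correspondence between $\mU$-modules $X\kmodto 1$ and points of $PX=\two^{X^\op}$ (resp. $[0,\infty]^{X^\op}$) furnished by exponentiability of $X^\op$ (Corollary \ref{Xopexp}), the module $\psi$ corresponds to the element $\psi\in PX$ given pointwise by $x\mapsto\psi(x)$ — just as in the ordered case a down-set $\psi:X\modto 1$ is literally an element of $PX$. This is immediate from the definitions but should be stated. A second small verification is that $-\whiteleft a$ is genuinely the identity on $\mU$-modules into $1$: this follows because $-\kleisli a$ is the identity functor on the hom-poset $\Mod{\mU}(X,1)$ (by $\psi\kleisli a=\psi$, valid since $a$ is the identity $\mU$-module), and a right adjoint to the identity is the identity.

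I expect no genuine obstacle here: the lemma is, by design of the preceding development, a one-line corollary of Theorem \ref{TheoremGenYoneda}. The only thing that could trip one up is the ``pseudo-issue'' flagged in the introduction — that $PX$ need not be separated, so ``the'' mate is determined only up to the equivalence on $PX$ — but since the displayed equality $\fspstr{U\yoneda_X(\frx)}{\psi}=\psi(\frx)$ is an equality of numbers (truth values resp. elements of $[0,\infty]$), not of points of $PX$, this causes no trouble. I would therefore present the proof in roughly three sentences: instantiate $\varphi=a$ in Theorem \ref{TheoremGenYoneda}; use $\psi\whiteleft a=\psi$ on the left and $\mate{a}=\yoneda_X$, $\mate{\psi}=\psi$ on the right; conclude.
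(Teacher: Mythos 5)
Your proposal is correct and is essentially the paper's own argument: the lemma is obtained by specialising Theorem \ref{TheoremGenYoneda} to $\varphi=a$ (the identity $\mU$-module, whose mate is $\yoneda_X$) and $\psi:X\kmodto 1$, using $\psi\whiteleft a=\psi$. The additional bookkeeping you flag (that $\mate{\psi}$ for $Z=1$ is just $\psi$ as a point of $PX$) is a harmless elaboration of what the paper leaves implicit.
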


As usual, the lemma above tells us that the Yoneda embedding is fully faithful (=initial). For a topological space $X$, the Yoneda lemma says that, when identifying $\psi\in PX$ with a filter $\frf\in F_0(X)$,
\[
U\!\yoneda_X(\frx)\to\frf\iff\frx\supseteq\frf,
\]
which follows also easily from the definition of the topology on $F_0(X)$ (see Example \ref{Psh_vs_Filter}).

Each module $\varphi:X\kmodto Y$ induces maps $-\kleisli\varphi:PY\to PX$ and $-\whiteleft\varphi:PX\to PY$ which are both cont(inuous/ractive) as $-\kleisli\varphi$ is the mate of the module $(\yoneda_Y)_*\kleisli\varphi:X\kmodto PY$, and $-\whiteleft\varphi$ is the mate of $(\mate{\varphi})_*:Y\kmodto PX$, and therefore form an adjunction $-\kleisli\varphi\dashv -\whiteleft\varphi$ in $\TOP$/$\AP$. Hence, for $f:X\to Y$ in $\TOP$/$\AP$, one has
\[
\xymatrix{PX\ar@/^1.5em/[rr]^{(-\kleisli f^*)}_\bot\ar@/_1.5em/[rr]_{(-\whiteleft f_*)}^\bot && PY.\ar[ll]|-{(-\kleisli f_*)}}
\]
In the sequel we write $Pf$ for $-\kleisli f^*$. Note that $\psi\whiteleft(\yoneda_X)_*=\fspstr{-}{\psi}=\psi^*$, hence $-\whiteleft(\yoneda_X)_*=\yoneda_{PX}$.

Following the order-path, one calls a topological/approach space \emph{cocomplete} if the Yoneda embedding $\yoneda_X:X\to PX$ has a left adjoint $\Sup_X:PX\to X$ in $\TOP$/$\AP$. If, for a topological space $X$, we think of $PX$ as $F_0(X)$, then $\Sup_X$ produces for each filter $\frf\in F_0(X)$ a smallest convergence point. In \citep{Hof_Cocompl} it is shown that cocomplete spaces behave pretty much as cocomplete ordered sets:
\begin{itemize}
\item cocomplete=injective,
\item $PX$ is cocomplete\footnote{Of course, this follows also from the fact that any power of $\two$ respectively $[0,\infty]$ is injective in $\TOP$ respectively $\AP$.} where a supremum $\Sup_X:PPX\to PX$ is given by $-\kleisli(\yoneda_X)_*$,
\item the subcategory $\COCTS_\sep$ of $\TOP$/$\AP$ consisting of cocomplete T$_0$ spaces and left adjoint morphisms is reflective, and the Yoneda embedding provides a universal arrow,
\item even better, $\COCTS_\sep$ is monadic over $\TOP$/$\AP$ where the induced monad $\mPsh$ is of Kock-Z\"oberlein type and has $P$ as functor, the Yoneda embeddings $\yoneda_X:X\to PX$ as units and $\yonmult_X:=-\kleisli(\yoneda_X)_*:PPX\to PX$ as multiplications (providing us with the filter monad in the topological case and with what one might call now \emph{approach filter monad} in the approach case),
\item even even better, $\COCTS_\sep$ is also monadic over $\SET$ and $\ORD$/$\MET$.
\end{itemize}

\section{A seemingly unnatural dual adjunction}\label{SectionDuality}

At the end of Section \ref{SectComplOrdSet} we briefly discussed the dual adjunction between $\ORD$ and $\CCD$. The proof sketched there is (can be) entirely formulated in ``modul\^es'', hence it goes through without big problems for $\TOP$/$\AP$. It is interesting to observe that this only applies to $X\mapsto\two^{X^\op}$, the construction $X\mapsto\two^{X}$ (see Remark \ref{DualConstr}) is a completely different story and studied in general in \citep{HS_Morita}. Note that $(-)^\op:\TOP\to\TOP$ is no longer an equivalence, and also that $\two^{X^\op}$ is a (very particular) topological space but $\two^{X}$ in general not since $\TOP$ is not Cartesian closed. Of course, $X\mapsto\two^{X}$ leads to the well-known dual adjunction between $\TOP$ and $\FRM$, so lets look now at $X\mapsto\two^{X^\op}$.

In analogy to the $\ORD$-case, a cocomplete topological/approach space $X$ is called \emph{completely distributive} if $\SUP_X:PX\to X$ has a left adjoint in $\TOP$/$\AP$. This is not an empty concept since any space of type $PX$ is (cd), witnessed by the string of adjunctions
\[
\yoneda_{PX}=-\whiteleft(\yoneda_X)_*\vdash -\kleisli(\yoneda_X)_*\vdash -\kleisli(\yoneda_X)^*=P\!\yoneda_X.
\]
We let $\CDTOP$ ($\CDAP$) denote the category of completely distributive topological (approach) T$_0$-spaces and left-and-right adjoint cont(inuous/ractive) maps. The presheaf construction defines functors
\begin{align*}
 D:\TOP^\op\to\CDTOP &&\text{respectively}&& D:\AP^\op\to\CDAP
\end{align*}
sending $f:X\to Y$ to $-\kleisli f_*:PY\to PX$, that is, $DX=PX$ and $Pf\dashv Df$. A completely distributive space $L$ comes together with $\yoneda_L:L\to PL$ and $t_L:L\to PL$ where $t_L\dashv \Sup_L$. As before, we consider now the equaliser
\begin{align}\label{EqDiagS}
 \xymatrix{A\ar[r]^i & L\ar@<0.7ex>[r]^{t_L}\ar@<-0.7ex>[r]_{\yoneda_{L}} & PL.}
\end{align}
in $\TOP$/$\AP$. Let also $M$ be a completely distributive space with corresponding equaliser $j:B\hrw M$ and $f:L\to M$ in $\CDTOP$/$\CDAP$, hence $f$ preserves suprema and has a left adjoint $g:M\to L$. Therefore the diagrams
\begin{align*}
\xymatrix{M\ar[r]^{\yoneda_M}\ar[d]_g & PM\ar[d]^{Pg}\\ L \ar[r]_{\yoneda_L} & PL}
&&\text{and}&&
\xymatrix{PL\ar[r]^{\Sup_L}\ar[d]_{Pf} & L\ar[d]^f\\ PM\ar[r]_{\Sup_M} & M}
\end{align*}
commute (up to equivalence), and from the latter follows that also
\[
 \xymatrix{M\ar[r]^{t_M}\ar[d]_g & PM\ar[d]^{Pg}\\ L \ar[r]_{t_L} & PL}
\]
commutes (up to equivalence, but $PL$ is separated, so it really commutes). We conclude that $g:M\to L$ restricts to a cont(inuous/ractive) map $g_0:B\to A$. Summing up, we obtain functors
\begin{align*}
 S:\CDTOP\to\TOP^\op &&\text{respectively}&& S:\CDAP\to\AP^\op 
\end{align*}
where $SL:=A$ and $Sf=g_0$. 

To construct a natural transformation $\eta:1\to SD$, we start by observing that $P\!\yoneda_X\cdot\yoneda_X=\yoneda_{PX}\cdot\yoneda_X$ for any $X$ in $\TOP$/$\AP$; however, $\yoneda_X$ is in general not the equaliser of $P\!\yoneda_X$ and $\yoneda_{PX}$. Nevertheless, the universal property of the equaliser gives a cont(inuous/ractive) map $\eta_X:X\to SD(X)$ which is just the corestriction of the Yoneda embedding, and $\eta=(\eta_X)_X$ is indeed a natural transformation. Let now $L$ in $\CDTOP$/$\CDAP$ with equaliser diagram \eqref{EqDiagS}, we put
\[
\xymatrix@C=5em{L\ar[r]_{\yoneda_L}\ar@/^{2em}/[rr]^{\eps_L} & PL\ar[r]_-{-\kleisli i_*} & PA=DS(L).}
\]
Then $\eps_L$ is as right adjoint since both $\yoneda_L$ and $-\kleisli i_*$ are. To see that $\eps_L$ is also left adjoint, we show that 
\[
\xymatrix{PL\ar[r]^{P\eps_L}\ar[d]_{\Sup_L} & PPA\ar[d]^{\sup_{PA}=-\kleisli(\yoneda_A)_*}\\
L\ar[r]_{\eps_L} & PA}
\]
commutes. Let $\psi\in PL$ and $\fra\in UA$. Then (with $L=(L,a)$)
\begin{align*}
\eps_L\cdot\Sup_L(\psi)(\fra)&=a(Ui(\fra),\Sup_L(\psi))\\
&= \fspstr{U(t_L\cdot i)(\fra)}{\psi} && (t_L\dashv\Sup_L)\\
&=\fspstr{U\!\yoneda_L(Ui(\fra))}{\psi} && (t_L\cdot i=\yoneda_L\cdot i)\\
&=\psi(Ui(\fra))=\psi\kleisli i_*(\fra) &&(\text{Yoneda lemma})
\intertext{and}
\Sup_{PA}\cdot P\eps_L(\psi) &= \psi\kleisli\eps_L^*\kleisli(\yoneda_A)_*\\
&=\psi\kleisli\yoneda_L^*\kleisli(-\kleisli i_*)^*\kleisli(\yoneda_A)_*
&&(\eps_L=(-\kleisli i_*)\cdot\yoneda_L)\\
&=\psi\kleisli\yoneda_L^*\kleisli(Pi)_*\kleisli(\yoneda_A)_*
&&(Pi\dashv(-\kleisli i_*),\text{ hence }(Pi)_*=(-\kleisli i_*)^*)\\
&=\psi\kleisli\yoneda_L^*\kleisli(\yoneda_L)_*\kleisli i_*=\psi\kleisli i_* .
\end{align*}
Next we show that $\eps=(\eps_L)_L$ is a natural transformation $\eps:1\to DS$. To this end, let $f:L\to M$ in $\CDTOP$/$\CDAP$ with left adjoint $g:M\to L$. We have to convince our self that
\[
\xymatrix{L\ar[d]_f\ar[r]^{\eps_L} & PA\ar[d]^{-\kleisli(g_0)_*}\\ M\ar[r]_{\eps_M} & PB}
\]
commutes (we use here the notation introduced above), which we do by pasting the commutative diagrams
\begin{align*}
\xymatrix{L\ar[d]_f\ar[r]^{\yoneda_L} & PL\ar[d]^{Pf}\\ M\ar[r]_{\yoneda_M} &PM}
&&\text{and}&&
\xymatrix{PL\ar[d]_{-\kleisli g_*}\ar[r]^{-\kleisli i_*} & PA\ar[d]^{-\kleisli(g_0)_*}\\
PM\ar[r]_{-\kleisli j_*} & PB}
\end{align*}
together. This is indeed possible since from $Pg\dashv Pf$ and $Pg\dashv(-\kleisli g_*)$ follows $Pf=-\kleisli g_*$. Finally, the composites
\[
\begin{matrix}
SL & \xrightarrow{\,\eta_{SL}\,} & SDS(L) &\xrightarrow{\,S(\eps_L)\,} & SL\hspace{2em}\\
x  & \longmapsto & x^* &\longmapsto & \Sup_L(x^*)&=x
\end{matrix}
\]
and
\[
\begin{matrix}
DX & \xrightarrow{\,\eps_{DX}\,} & DSD(X) & \xrightarrow{\,D(\eta_X)\,} & DX\hspace{3em}\\
\psi &\longmapsto & \psi^*\kleisli i_* & \longmapsto & \psi^*\kleisli i_*\kleisli(\eta_X)_*
&=\psi^*\kleisli(\yoneda_X)_*=\psi
\end{matrix}
\]
are both equal to the identity, where $i:SDX\hrw DX$ denotes the inclusion map.

\begin{theorem}
$(D,S,\eta,\eps)$ define a (dual) adjunction $\TOP^\op\leftrightarrows\CDTOP$ resp.\ $\AP^\op\leftrightarrows\CDAP$.
\end{theorem}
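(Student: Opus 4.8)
The plan is to check that the data exhibited in the preceding discussion — the functors $D$, $S$ and the natural transformations $\eta\colon 1\to SD$, $\eps\colon 1\to DS$ — constitute a dual adjunction, i.e.\ an ordinary adjunction $D\dashv S$ between $\TOP^\op$ and $\CDTOP$ (and likewise between $\AP^\op$ and $\CDAP$). First I would confirm that the two functors are well-defined with the stated (co)domains. For $D$: every presheaf space $PX$ lies in $\CDTOP$ by the chain of adjunctions $\yoneda_{PX}\dashv(-\kleisli(\yoneda_X)_*)\dashv P\!\yoneda_X$ recorded above, and $Df=(-\kleisli f_*)$ is a left-and-right adjoint — it is right adjoint to $Pf=(-\kleisli f^*)$ and left adjoint to $(-\whiteleft f_*)$ — hence a morphism of $\CDTOP$. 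For $S$: the equaliser $SL$ is a subspace of the T$_0$ space $L$ and so again T$_0$, while $Sf=g_0$ is the restriction of the left adjoint $g$ of $f$, which is cont(inuous/ractive) and carries $SM$ into $SL$ because the squares relating $g$ with $\yoneda_L,\yoneda_M$ and with $t_L,t_M$ commute.

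Next I would invoke the two natural transformations already produced. The component $\eta_X\colon X\to SDX$ is the corestriction of the Yoneda embedding $\yoneda_X$ along the equaliser $i\colon SDX\hrw PX$, which exists since $P\!\yoneda_X\cdot\yoneda_X=\yoneda_{PX}\cdot\yoneda_X$; its naturality is inherited from that of $\yoneda$. The component $\eps_L=(-\kleisli i_*)\cdot\yoneda_L\colon L\to PA$ is a right adjoint as a composite of right adjoints, and a left adjoint by the square identifying $\Sup_{PA}\cdot P\eps_L$ with $\eps_L\cdot\Sup_L$; naturality of $\eps$ is then obtained by pasting the naturality square of $\yoneda$ with the compatibility square of $(-\kleisli i_*)$, using the identity $Pf=-\kleisli g_*$ valid whenever $g\dashv f$. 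With these components in place, the proof is completed by the two triangle identities: $S\eps_L\cdot\eta_{SL}=1_{SL}$ reduces to $\Sup_L(x^*)=x$ (which holds because $\yoneda_L$ is fully faithful and $L$ is separated, so $\Sup_L\cdot\yoneda_L=1_L$), and $D\eta_X\cdot\eps_{DX}=1_{DX}$ reduces to $\psi^*\kleisli i_*\kleisli(\eta_X)_*=\psi^*\kleisli(\yoneda_X)_*=\psi$, the last equality being cocompleteness of $PX$.

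I expect the genuinely delicate step to be the left-adjointness of $\eps_L$. In the ordered case the analogue is routine, since a monotone map with the relevant composite property is automatically a left adjoint; here, by contrast, cont(inuity/ractivity) of the candidate adjoint is not automatic, so one must actually run the generalised Yoneda lemma (Lemma~\ref{LemmaYoneda}) together with the Kleisli calculus for $\mU$-modules in order to establish commutativity of the square with $\Sup_L$, $\Sup_{PA}$, $P\eps_L$ and $\eps_L$ — precisely the computation displayed just before the theorem. Everything else transfers essentially verbatim from the $\ORD$-argument of Section~\ref{SectComplOrdSet}, so the remaining work is bookkeeping with modules and adjunctions rather than anything specifically topological.
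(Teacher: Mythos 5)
Your proposal is correct and follows the paper's own proof: the theorem merely packages the preceding construction, and you verify exactly the same ingredients — well-definedness of $D$ and $S$, naturality of $\eta$ and $\eps$, left-adjointness of $\eps_L$ via the generalised Yoneda computation, and the two triangle identities reducing to $\Sup_L(x^*)\cong x$ and $\psi^*\kleisli(\yoneda_X)_*=\psi$. One notational slip only: the adjoint string witnessing complete distributivity of $PX$ is $P\!\yoneda_X\dashv(-\kleisli(\yoneda_X)_*)\dashv\yoneda_{PX}$, i.e.\ the reverse of the order you wrote, since the paper's symbol $\vdash$ points the opposite way from $\dashv$.
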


\begin{sideremark}\label{no_schiz_obj}
The dual adjunction above does not seem to be induced by a schizophrenic object. Certainly, $S\cong\hom(-,\two)$ respectively $S\cong\hom(-,[0,\infty])$, but there is no space $X$ with $D\cong\hom(-,X)$. This indicates that the ``obvious'' forgetful functor $\CDTOP\to\SET$ respectively $\CDAP\to\AP$ is a ``bad'' choice, in fact, we will later on (Remark \ref{yes_schiz_obj}) see that there is a better candidate.
\end{sideremark}

As for any dual adjunction, one obtains a dual equivalence between the fixed full subcategories
\begin{align*}
\Fix(\eta):=\{X\mid \eta_X\text{ is an isomorphism}\}
&&\text{and}&&
\Fix(\eps):=\{L\mid \eps_L\text{ is an isomorphism}\}
\end{align*}
which we determine now.
\begin{lemma}\label{LemAdjModEq}
For each topological/approach space $X$ and $\psi\in PX$, 
\[
 P\!\yoneda_X(\psi)=\yoneda_{PX}(\psi)\iff\text{$\psi$ is right adjoint.}
\]
\end{lemma}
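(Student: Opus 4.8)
\emph{Setup.} Since the presheaf monad $\mPsh=(P,\yoneda,\yonmult)$ on $\TOP$/$\AP$ is of Kock--Z\"oberlein type (the analogue of Remark \ref{KZ}), one already has $P\yoneda_X\le\yoneda_{PX}$, so $P\yoneda_X(\psi)\sqsubseteq\yoneda_{PX}(\psi)$ holds in $\Mod{\mU}$ for every $\psi$ and only the reverse inequality is at stake. I shall use: $P\yoneda_X(\psi)=\psi\kleisli(\yoneda_X)^*$ and $\yoneda_{PX}(\psi)=\psi^*=(\mate{\psi})^*$, where for a point $\mate{\psi}\colon1\to PX$ the $\mU$-module $(\mate{\psi})^*\colon PX\kmodto1$ is right adjoint with left adjoint $(\mate{\psi})_*$; the full faithfulness of $\yoneda_X$ in the form $(\yoneda_X)^*\kleisli(\yoneda_X)_*=a$ (the identity $\mU$-module on $X$); and the monad laws $\yonmult_X\cdot\yoneda_{PX}=1_{PX}=\yonmult_X\cdot P\yoneda_X$, the first of which reads $(\mate{\psi})^*\kleisli(\yoneda_X)_*=\psi$.

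\emph{The implication ``$\Leftarrow$''.} Let $\varphi\dashv\psi$ in $\Mod{\mU}$ with $\varphi\colon1\kmodto X$. Composing this adjunction with $(\yoneda_X)_*\dashv(\yoneda_X)^*$ yields $(\yoneda_X)_*\kleisli\varphi\dashv\psi\kleisli(\yoneda_X)^*$, so $P\yoneda_X(\psi)=\psi\kleisli(\yoneda_X)^*$ is a \emph{right-adjoint} $\mU$-module of type $PX\kmodto1$. Now $PX$ is separated and cocomplete, hence Cauchy complete (a cocomplete space admits all colimits, in particular the absolute ones; cf.\ Section \ref{SectMetSp} and \citep{Hof_Cocompl}), so $\yoneda_{PX}$ is an isomorphism onto the Cauchy completion $\widetilde{PX}$ and every right-adjoint $\mU$-module $PX\kmodto1$ is of the form $\chi^*=\yoneda_{PX}(\chi)$ for a (unique) $\chi\in PX$. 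Thus $P\yoneda_X(\psi)=\chi^*$ with $\chi=\yonmult_X(\chi^*)=\yonmult_X(P\yoneda_X(\psi))=\psi$ by the monad laws, whence $P\yoneda_X(\psi)=\psi^*=\yoneda_{PX}(\psi)$.

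\emph{The implication ``$\Rightarrow$''.} Assume $P\yoneda_X(\psi)=\yoneda_{PX}(\psi)$ and put $\varphi:=(\yoneda_X)^*\kleisli(\mate{\psi})_*\colon1\kmodto X$. Substituting $\psi=(\mate{\psi})^*\kleisli(\yoneda_X)_*$, and using the counit $(\mate{\psi})_*\kleisli(\mate{\psi})^*\sqsubseteq1_{PX}$ together with $(\yoneda_X)^*\kleisli(\yoneda_X)_*=a$,
\[
\varphi\kleisli\psi=(\yoneda_X)^*\kleisli\bigl((\mate{\psi})_*\kleisli(\mate{\psi})^*\bigr)\kleisli(\yoneda_X)_*\sqsubseteq(\yoneda_X)^*\kleisli(\yoneda_X)_*=a ,
\]
while the hypothesis and the unit $1_1\sqsubseteq(\mate{\psi})^*\kleisli(\mate{\psi})_*$ (the identity $\mU$-module on the terminal space) give
\[
\psi\kleisli\varphi=\bigl(\psi\kleisli(\yoneda_X)^*\bigr)\kleisli(\mate{\psi})_*=\yoneda_{PX}(\psi)\kleisli(\mate{\psi})_*=(\mate{\psi})^*\kleisli(\mate{\psi})_*\sqsupseteq1_1 .
\]
Hence $\varphi\dashv\psi$, i.e.\ $\psi$ is right adjoint.

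\emph{On the main point.} The computations above are the evident transcriptions of the ordered case; the only genuinely non-formal input is that $PX$ is Cauchy complete -- equivalently, that cocompleteness of $PX$ forces $\yoneda_{PX}\colon PX\to\widetilde{PX}$ to be surjective. I expect this (and, more precisely, the bookkeeping needed so that it behaves well in the absence of separation axioms elsewhere, although $PX$ itself is separated) to be where care is required. A proof of ``$\Leftarrow$'' that avoids Cauchy completeness instead evaluates both $\mU$-modules at $\frp\in U(PX)$ via the generalised Yoneda lemma (Theorem \ref{TheoremGenYoneda}), bounds $\fspstr{\frp}{U\yoneda_X(\frx)}$ in terms of $\fspstr{\frp}{\psi}$ and $\varphi$ using $\varphi\kleisli\psi\sqsubseteq a$, and then discards $\varphi$ using $1_1\sqsubseteq\psi\kleisli\varphi$; here the topological Yoneda lemma, with its ultrafilters of ultrafilters, is the technically heavier case.
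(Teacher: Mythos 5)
Your proof is correct, but it follows a genuinely different route from the paper's. The paper outsources both directions to results of \citep{HT_LCls}: for ``$\Leftarrow$'' it uses that $\tilde{X}=\{\psi\in PX\mid\psi\text{ right adjoint}\}$ is the \emph{Lawvere closure} of $\yoneda_X(X)$ in $PX$ together with the observation that the equaliser of $\yoneda_{PX}$ and $P\yoneda_X$ is Lawvere closed and contains $\yoneda_X(X)$; for ``$\Rightarrow$'' it evaluates $\psi\kleisli\yoneda_X^*$ at the principal ultrafilter $\doo{\psi}$ and then invokes Proposition 4.16 of \citep{HT_LCls}. You instead argue purely inside the module calculus: for ``$\Rightarrow$'' you exhibit the left adjoint explicitly as $\varphi=\yoneda_X^*\kleisli(\mate{\psi})_*$ and verify unit and counit from full faithfulness of $\yoneda_X$ and the identity $\psi^*\kleisli(\yoneda_X)_*=\psi$, which replaces the citation by a short self-contained computation; for ``$\Leftarrow$'' you compose adjunctions to see that $P\yoneda_X(\psi)$ is right adjoint and then use that the cocomplete separated space $PX$ is Lawvere (``Cauchy'') complete, so this module is representable, after which the two unit laws of $\mPsh$ pin the representing object down to $\psi$. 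The trade-off: the paper's proof is short given the closure-theoretic machinery of \citep{HT_LCls}, while yours only needs the standard (and non-circular) fact that a cocomplete space is Lawvere complete -- a fact the paper itself invokes in a later remark. One small caveat: your parenthetical justification of that fact (``admits all colimits, in particular the absolute ones'') is a bit loose; what is actually needed is that the colimit of a right-adjoint weight \emph{represents} it, i.e.\ if $\varphi\dashv\chi$ and $x=\Sup(\chi)$ then $x_*=a\whiteleft\chi=\varphi$, whence $x^*=\chi$ by uniqueness of right adjoints -- a routine computation, so this is a presentational gap rather than a mathematical one.
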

\begin{proof}
Our proof uses the fact obtained by \citep{HT_LCls} that
\[
\tilde{X}:=\{\psi\in PX\mid \psi\text{ is right adjoint}\}                                                  
\]
is the Lawvere closure of $\yoneda_X(X)$ in $PX$. Clearly, the equaliser of $\yoneda_{PX}$ and $P\!\yoneda_X$ is Lawvere closed and contains $\yoneda_X(X)$, and the implication ``$\Lw$'' follows. To see ``$\Rw$'', note that from $P\!\yoneda_X(\psi)=\yoneda_{PX}(\psi)$ follows $\psi^*=\psi\kleisli\yoneda_X^*$, hence $\psi\kleisli\yoneda_X^*(\doo{\psi})$ is $\true$ respectively $0$. Since $Ue_Y\cdot e_Y= m_Y^\circ\cdot  e_Y$\footnote{The same holds for any monad where $T1=1$.} for any $Y$,
\[
\psi\kleisli\yoneda_X^*(\doo{\psi})
=\psi\cdot U\yoneda_X^*(e_{UPX}\cdot e_{PX}(\psi))
=\bigvee_{\frx\in UX}\psi(x)\otimes U\hat{a}(e_{UPX}\cdot e_{PX}(\psi),T\yoneda_X(\frx))
\]
where $\hat{a}$ denotes the structure on $PX$, $\otimes$ is either $\&$ or $+$, and $\bigvee$ is either $\exists$ or $\inf$. The result follows now from Proposition 4.16 (3.16 in the arXiv-version) of \citep{HT_LCls}.
\end{proof}
Hence, $X$ belongs to $\Fix(\eta)$ precisely if each right adjoint module $\psi$ is representable as $\psi=x^*$ for a unique $x\in X$. But this is precisely the definition of a \emph{Lawvere complete}\footnote{also called Cauchy complete} separated space as introduced in \citep{CH_Compl}. In both the topological and the approach case, Lawvere completeness together with separateness means soberness, so that $\Fix(\eta)$ is precisely the category $\SOB$/$\ASOB$ of sober topological/approach spaces and continuous/contraction maps.

\begin{example}\label{ExampleRighAdjModTop}
For a topological space $X$, a $\mU$-module $\varphi:1\kmodto X$ corresponds to a closed subset $A\subseteq X$, and $\psi:X\kmodto 1$ to a closed subset $\calA\subseteq UX$. With this identification, $\varphi\dashv\psi$ means that (see \citep{CH_Compl})
\begin{itemize}
\item $\calA=\{\frx\in UX\mid \forall\,x\in A\,.\,\frx\to x\}$,
\item there exists an ultrafilter $\frx_0\in\calA$ with $A\in\frx_0$.
\end{itemize}
Hence, for any $\frx\in\calA$ and any $B\in\frx$, $A\subseteq\overline{B}$ and therefore $B\in\frx_0$. We conclude that $\frx\le\frx_0$, hence $\calA=\downc\frx_0$.
\end{example}

For $L$ in $\CDTOP$/$\CDAP$, $\eps_L:L\to PA$ has a left adjoint $c:PA\to L$ which sends $\psi\in PA$ to $\Sup_L(\psi\kleisli i^*)$. Since $\eps_L$ preserves suprema and $\eps\cdot i=\yoneda_A$, we see that even $\eps_L\cdot c=1$ since
\begin{multline*}
\eps_L\cdot c(\psi)=\eps_L(\Sup_L(\psi\kleisli i^*))=\Sup_{PA}(P\eps_L(\psi\kleisli i^*))
=\Sup_{PA}(\psi\kleisli i^*\kleisli\eps_L^*)\\
=\Sup_{PA}(\psi\kleisli\yoneda_A^*)
=\yonmult_A\cdot P\yoneda_A(\psi)=\psi.
\end{multline*}
We call a completely distributive topological/approach space $L$ \emph{totally algebraic} if also $c\cdot\eps_L\cong1$, which amounts to the condition
\[
 \Sup_L(x^*\kleisli i_*\kleisli i^*)\cong x
\]
for each $x\in X$. Clearly, $\Fix(\eps)$ is the full subcategory of $\CDTOP$/$\CDAP$ consisting of all totally algebraic $T_0$-spaces; we denote this category as $\TATOP$ respectively as $\TAAP$. In conclusion,
\begin{theorem}\label{TheoremAbsDuality}
$\SOB^\op\cong\TATOP$ and $\ASOB^\op\cong\TAAP$.
\end{theorem}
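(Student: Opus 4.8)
The plan is to obtain Theorem~\ref{TheoremAbsDuality} as an immediate consequence of the dual adjunction $(D,S,\eta,\eps)$ established just above, together with the two identifications of fixed subcategories carried out in the discussion preceding the statement. I would first invoke the purely formal fact that any (dual) adjunction $D\colon\catfont{A}^\op\rightleftarrows\catfont{B}\colon S$ with unit $\eta$ and counit $\eps$ restricts to a dual \emph{equivalence} between the full subcategories $\Fix(\eta)$ and $\Fix(\eps)$: on $\Fix(\eta)$ the unit is invertible, $D$ carries $\Fix(\eta)$ into $\Fix(\eps)$ and $S$ carries $\Fix(\eps)$ into $\Fix(\eta)$, and the triangle identities then exhibit $D$ and $S$ as mutually inverse up to the (now invertible) $\eta$ and $\eps$. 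Hence everything reduces to describing $\Fix(\eta)$ and $\Fix(\eps)$ concretely.

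For $\Fix(\eta)$ I would argue as follows. By construction $\eta_X\colon X\to SD(X)$ is the corestriction of the Yoneda embedding $\yoneda_X\colon X\to PX$ to the equaliser of $P\yoneda_X$ and $\yoneda_{PX}$, and by Lemma~\ref{LemAdjModEq} this equaliser is exactly the subspace $\tilde X=\{\psi\in PX\mid\psi\text{ right adjoint}\}$, which by \citep{HT_LCls} is the Lawvere closure of $\yoneda_X(X)$. Thus $\eta_X$ is an isomorphism precisely when every right adjoint $\mU$-module $\psi\colon X\kmodto 1$ equals $x^*$ for a unique $x\in X$, i.e.\ when $X$ is Lawvere complete and separated; since ``Lawvere complete $+$ separated'' means sober in both the topological and the approach setting, $\Fix(\eta)=\SOB$ (resp.\ $\ASOB$).

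For $\Fix(\eps)$ I would use the computation already made: for $L$ in $\CDTOP$/$\CDAP$ with $A=SL$ and inclusion $i\colon A\hrw L$, the counit component $\eps_L\colon L\to PA$ has left adjoint $c\colon PA\to L$, $\psi\mapsto\Sup_L(\psi\kleisli i^*)$, and one always has $\eps_L\cdot c=1$. Therefore $\eps_L$ is an isomorphism if and only if $c\cdot\eps_L\cong 1$, which unwinds to the condition
\[
\Sup_L(x^*\kleisli i_*\kleisli i^*)\cong x\quad\text{for each }x\in L,
\]
i.e.\ $L$ is totally algebraic. So $\Fix(\eps)$ is precisely $\TATOP$ (resp.\ $\TAAP$), and feeding this back into the first step yields $\SOB^\op\cong\TATOP$ and $\ASOB^\op\cong\TAAP$.

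In truth little genuinely new work is left once the statement is reached, so there is no serious obstacle \emph{at this point}; the weight of the argument lies earlier --- in verifying that $(D,S,\eta,\eps)$ really is a (dual) adjunction, and, above all, in Lemma~\ref{LemAdjModEq}, whose proof rests on the identification of $\tilde X$ as a Lawvere closure from \citep{HT_LCls} and on the coincidence of soberness with Lawvere completeness for separated spaces. The only mild care required here is bookkeeping: confirming that $\eta_X$, $\eps_L$, $c$ and the restricted left adjoint $g_0$ are genuinely cont(inuous/ractive) and that the relevant adjunctions hold in $\TOP$/$\AP$ rather than merely in the ordered module categories --- all of which was dealt with when $S$, $D$ and the (co)unit were constructed.
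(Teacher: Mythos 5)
Your proposal is correct and follows the paper's own route exactly: the theorem is obtained by restricting the dual adjunction $(D,S,\eta,\eps)$ to its fixed subcategories, with $\Fix(\eta)$ identified as the (approach) sober spaces via Lemma \ref{LemAdjModEq} and $\Fix(\eps)$ identified as the totally algebraic spaces via the computation $\eps_L\cdot c=1$ and the condition $c\cdot\eps_L\cong 1$. No substantive difference from the paper's argument.
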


\begin{example}
By definition, a topological space $X$ is totally algebraic if each element $x\in X$ is a supremum of the distributor $x^*\kleisli i_*\kleisli i^*:X\kmodto 1$. Intuitively, $x^*\kleisli i_*\kleisli i^*$ is the down-set of all totally algebraic elements below $x$, and in fact, $\frx\in UX$ belongs to $x^*\kleisli i_*\kleisli i^*$ if and only if there is some $\fra\in UA$ with $\frx\le\fra$ and $\fra\to x$.
\end{example}

\begin{sideremark}
It is well-known (see, for instance,Theorem 2.0 of \citep{LR_SGDuality}) that these fixed subcategories are reflective if and only if $\eta_{SL}$ respectively $\eps_{DX}$ are isomorphisms, that is, $SL$ is sober respectively $DX$ is totally algebraic. Now, any completely distributive space is cocomplete, hence Lawvere complete (=sober), and $SL$ is L-closed (see \citep{HT_LCls}) in $L$ since it is the equaliser of $\yoneda_L$ and $t_L$. Therefore $SL$ is sober. Certainly, $DX=PX$ is totally algebraic for each sober space $X$. For an arbitrary space $X$, the induced $\mU$-module $i_*$ of the sobrification $i:X\to\tilde{X}$ satisfies $i^*\kleisli i_*=1$ and $i_*\kleisli i^*=1$, therefore $PX\cong P\tilde{X}$ and the assertion follows.
\end{sideremark}

\section{Frames vs.\ complete distributivity}\label{SectionFramesDist}

In the previous section we have studied the dual adjunctions
\begin{align*}
\TOP^\op\leftrightarrows\CDTOP&&\text{and}&&\AP^\op\leftrightarrows\CDAP
\end{align*}
which (I believe) are quite different from the ``traditional ones with frames (see \citep{Isb_Frm}) respectively approach frames (see \citep{BLO_AFrm}). Nevertheless, these adjunctions restrict to dual equivalences involving (approach) sober spaces; therefore one might ask now about the relationship between frames and completely distributivity spaces. In this section we will consider only the topological case since I do not know the answer for approach spaces. 

Recall from Example \ref{Psh_vs_Filter} that $PX$ is homeomorphic to the filter space $F\calO X$, where $\calO X$ denotes as usual the frame of open subsets of a topological space $X$. Therefore we can hope that there is a commutative diagram
\[
\xymatrix{ & \TOP^\op\ar[dl]_\calO\ar[dr]^D\\ \FRM\ar[rr]_F && \CDTOP}
\]
of functors, where $FL$ denotes the usual filter space of a frame. More general, for a meet semi-lattice $L$ one puts
\[
 FL:=\{\frf\subseteq L\mid\text{ $\frf$ is a (possibly improper) filter}\}
\]
which is a topological space with
\[
 x^\#=\{\frf\in FL\mid x\in\frf\}\hspace{2em}(x\in L)
\]
as basic open set. Note that $1^\#=FL$ and $(x\wedge y)^\#=x^\#\cap y^\#$. Furthermore, the underlying order on $FL$ is given by
\[
\frf\le\frg\iff \doo{\frf}\to\frg\iff \forall x\in \frg\,.\,\frf\in x^\#\iff \frg\subseteq\frf,
\]
which also tells us that $FL$ is separated (=T$_0$). For a meet semi-lattice homomorphism $f:L\to M$, the mapping
\[
Ff:FL\to FM,\,\frf\mapsto\upc\{f(x)\mid x\in\frf\}
\]
is continuous since
\[
 Ff^{-1}(y^\#)=\{\frf\in FL\mid\exists x\in\frf\,.\,f(x)\le y\}=\bigcup_{x:f(x)\le y}x^\#,
\]
and so is
\[
 f_!:FM\to FL,\,\frg\mapsto f^{-1}(\frg).
\]
since
\begin{equation}\label{eqforrho}
 f_!^{-1}(x^\#)=\{\frg\mid f_!(\frg)\in x^\#\}=\{\frg\mid f(x)\in\frg\}=f(x)^\#.
\end{equation}
Furthermore, one easily verifies that $f_!\dashv Ff$ in $\TOP$. Given also $g:L\to M$ with $f\le g$ and $\frf\in FL$, then
\[
 \{g(x)\mid x\in\frf\}\subseteq\upc\{f(x)\mid x\in\frf\}=Ff(\frf)
\]
and therefore $Ff(\frf)\le Fg(\frf)$. We write $\TOP_{\inf}$ for the 2-category of T$_0$-spaces and right adjoint continuous maps with the pointwise order on hom-sets, and $\SLAT$ denotes the 2-category of meet semi-lattices and meet semi-lattice homomorphisms with the pointwise order on hom-sets.
\begin{proposition}
$F:\SLAT\to\TOP_{\inf}$ is a 2-functor.
\end{proposition}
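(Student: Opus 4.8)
The plan is to verify directly that the assignment $L \mapsto FL$, $f \mapsto Ff$ respects composition and identities and is locally monotone; functoriality on the underlying sets is the content we must supply, 2-functoriality then being immediate from the last computation preceding the statement. First I would check that $F$ preserves identities: for $f = 1_L$ the map $Ff$ sends a filter $\frf$ to $\upc\{x \mid x \in \frf\} = \upc\frf = \frf$, since filters are up-closed, so $F(1_L) = 1_{FL}$. Next, for composable meet semi-lattice homomorphisms $f\colon L \to M$ and $g\colon M \to N$, I would show $F(g \cdot f) = Fg \cdot Ff$ by evaluating both sides on a filter $\frf \in FL$: the right-hand side gives $\upc\{g(y) \mid y \in \upc\{f(x) \mid x \in \frf\}\}$, and since $g$ is monotone this up-set coincides with $\upc\{g(f(x)) \mid x \in \frf\} = \upc\{(g\cdot f)(x) \mid x \in \frf\}$, which is $F(g\cdot f)(\frf)$. (One should note in passing that $Ff(\frf)$ really is a filter, i.e.\ closed under finite meets, which uses that $f$ preserves $\wedge$ and $1$; this was implicitly needed already for $Ff$ to be well-defined as stated.)

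Having established that $F$ is an ordinary functor $\SLAT \to \TOP$, I would then observe that it lands in $\TOP_{\inf}$: the computation just before the statement shows $f_! \dashv Ff$ in $\TOP$, so each $Ff$ is a right adjoint continuous map, which is exactly the requirement for a morphism of $\TOP_{\inf}$. Finally, for the 2-dimensional part, the text has already shown that $f \le g$ in $\SLAT$ (pointwise order) implies $Ff(\frf) \le Fg(\frf)$ for every $\frf$, i.e.\ $Ff \le Fg$ in the pointwise order on $\TOP_{\inf}$-hom-sets; this is precisely local monotonicity of $F$. Since both $\SLAT$ and $\TOP_{\inf}$ are $2$-categories whose only non-trivial $2$-cells are these order relations, a functor that is locally monotone is automatically a $2$-functor (it sends the unique $2$-cell $f \Rightarrow g$ to the unique $2$-cell $Ff \Rightarrow Fg$, compatibly with all pastings, trivially). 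Assembling these pieces gives the claim.

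I do not expect a genuine obstacle here; the statement is essentially bookkeeping, and every ingredient (well-definedness of $Ff$, the adjunction $f_! \dashv Ff$, and the monotonicity $f \le g \Rightarrow Ff \le Fg$) has been verified in the lines immediately preceding the proposition. The only point requiring a modicum of care is the preservation-of-composition computation, where one must use monotonicity of the second homomorphism to identify $\upc\{g(y) \mid y \in \upc S\}$ with $\upc\{g(s) \mid s \in S\}$ for $S = \{f(x) \mid x \in \frf\}$; this is routine but is the one place where an "obvious" set-theoretic identity is actually being invoked.
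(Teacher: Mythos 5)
Your proposal is correct and follows essentially the same route as the paper: the paper states this proposition as a summary of the immediately preceding verifications (continuity of $Ff$ and $f_!$, the adjunction $f_!\dashv Ff$, and $f\le g\Rightarrow Ff\le Fg$), leaving the routine identity/composition bookkeeping implicit, which you supply correctly (including the up-closure identity via monotonicity of $g$ and the well-definedness of $Ff$ as a filter).
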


Given a meet semi-lattice $L$, one has the mapping
\[
\alpha_L:L\to\calO(FL),\,x\mapsto x^\#
\]
which is an order-embedding since $x^\#\subseteq y^\#\iff\upc x\in y^\#\iff x\le y$. Furthermore, $\alpha_L$ preserves all existing infima in $L$. To see this, observe first that
\[
 \interior(\calA)=\{\frf\in FL\mid \exists x\in\frf\,.\,x^\#\subseteq\calA\}
\]
Let now $(x_i)_{i\in I}$ be a family of elements of $L$ with infimum $x\in L$. Then
\begin{multline*}
\bigwedge_{i\in I} x_i^\#=\interior(\bigcap_{i\in I} x_i^\#)
=\{\frf\in FL\mid \exists z\in\frf\,\forall i\in I\,.\,z^\#\subseteq x_i^\#\}\\
=\{\frf\in FL\mid \exists z\in\frf\,\forall i\in I\,.\,z\le x_i\}
=\{\frf\in FL\mid x\in\frf\}=x^\#.
\end{multline*}
If $L$ is complete, then $\alpha_L:L\to\calO(FL)$ has a left adjoint $\beta_L:\calO(FL)\to L$ which is necessarily given by
\[
 \beta_L(\calA)=\bigwedge\{x\in L\mid \calA\subseteq x^\#\}.
\]

\begin{lemma}
Assume that $L$ is complete. For any open subset $\calA\subseteq FL$,
\[
 \bigwedge\{x\in L\mid \calA\subseteq x^\#\}=\bigvee\{y\in L\mid y^\#\subseteq\calA\}.
\]
\end{lemma}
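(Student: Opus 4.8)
The plan is to name the two sides, put
$a := \bigwedge\{x\in L\mid \calA\subseteq x^\#\}$ and $b := \bigvee\{y\in L\mid y^\#\subseteq\calA\}$
(both elements of $L$ since $L$ is complete; note $1$ always lies in the first set, so $a$ is at worst $\bigwedge L$), and prove $b\le a$ and $a\le b$ separately.

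For $b\le a$ I would fix any $y$ with $y^\#\subseteq\calA$ and any $x$ with $\calA\subseteq x^\#$. Then $y^\#\subseteq x^\#$, and since $\alpha_L$ is an order-embedding — the equivalence $x^\#\subseteq y^\#\iff\upc x\in y^\#\iff x\le y$ recalled just above the lemma — this forces $y\le x$. Hence each such $y$ is a lower bound of $\{x\mid\calA\subseteq x^\#\}$, so $y\le a$, and taking the supremum over all admissible $y$ yields $b\le a$.

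For the reverse inequality $a\le b$ it suffices to show $\calA\subseteq b^\#$: once this is known, $b$ itself belongs to $\{x\mid\calA\subseteq x^\#\}$, and therefore $a\le b$. So let $\frf\in\calA$. Because $\calA$ is open we have $\calA=\interior(\calA)$, and by the description of the interior recalled above, $\interior(\calA)=\{\frf\in FL\mid\exists z\in\frf.\ z^\#\subseteq\calA\}$, there is some $z\in\frf$ with $z^\#\subseteq\calA$. This $z$ lies in $\{y\mid y^\#\subseteq\calA\}$, so $z\le b$; since $\frf$ is a filter, hence up-closed, we conclude $b\in\frf$, i.e.\ $\frf\in b^\#$. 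Thus $\calA\subseteq b^\#$, giving $a\le b$, and combining the two inequalities yields $a=b$.

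I do not expect a genuine obstacle: this is the standard interior-versus-generators computation in a filter space. The only two points to watch are that $b$ (and $a$) actually exist in $L$, which is precisely where completeness is used, and that one really invokes $\calA=\interior(\calA)$ to extract the witness $z\in\frf$ — dropping openness of $\calA$ breaks the second inequality. In effect the lemma just records that the left adjoint $\beta_L$ constructed before it may equivalently be computed ``from below'' by $\beta_L(\calA)=\bigvee\{y\mid y^\#\subseteq\calA\}$.
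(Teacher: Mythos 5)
Your argument is correct and essentially identical to the paper's: the nontrivial inequality is obtained exactly as there, by extracting from openness a witness $z\in\frf$ with $z^\#\subseteq\calA$ and using up-closedness of $\frf$ to conclude $\calA\subseteq b^\#$. The only difference is that you also write out the easy inequality $b\le a$, which the paper dismisses with ``we only need to show $\le$''.
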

\begin{proof}
We only need to show ``$\le$''. We put $z=\bigvee\{y\in L\mid y^\#\subseteq\calA\}$ and show $\calA\subseteq z^\#$. To this end, let $\frf\in\calA$. Since $\calA$ is open, there is some $u\in\frf$ with $u^\#\subseteq\calA$. Hence $u\le z$ and therefore $\frf\in z^\#$.
\end{proof}

\begin{proposition}
For every frame $L$, $\beta_L:\calO(FL)\to L$ is a frame homomorphism.
\end{proposition}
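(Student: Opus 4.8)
The plan is to verify the two defining properties of a frame homomorphism for $\beta_L\colon\calO(FL)\to L$: preservation of arbitrary joins and of finite meets (the latter including the empty meet, i.e.\ the top element). Preservation of joins is immediate, since $\beta_L$ is a left adjoint (to $\alpha_L$) and hence preserves all colimits; in particular $\beta_L\bigl(\bigcup_i\calA_i\bigr)=\bigvee_i\beta_L(\calA_i)$ for every family $(\calA_i)_i$ of open sets, and $\beta_L(\emptyset)=0$. For the top element, I would first record the identity $\beta_L\alpha_L=1_L$: indeed $\beta_L(x^\#)=\bigwedge\{y\in L\mid x^\#\subseteq y^\#\}=\bigwedge\{y\in L\mid x\le y\}=x$, using that $\alpha_L$ is an order-embedding. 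Since $FL=1^\#=\alpha_L(1)$, this gives $\beta_L(FL)=1$.

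It remains to treat binary meets, i.e.\ to show $\beta_L(\calA\cap\calB)=\beta_L(\calA)\wedge\beta_L(\calB)$ for open $\calA,\calB\subseteq FL$ (note $\calA\cap\calB$ is again open). The inequality ``$\le$'' is just monotonicity of $\beta_L$ applied to $\calA\cap\calB\subseteq\calA$ and $\calA\cap\calB\subseteq\calB$. For ``$\ge$'' I would switch to the description of $\beta_L$ supplied by the preceding Lemma, $\beta_L(\calC)=\bigvee\{y\in L\mid y^\#\subseteq\calC\}$, and compute, using the distributive law of the frame $L$,
\[
\beta_L(\calA)\wedge\beta_L(\calB)=\Bigl(\bigvee\{y\mid y^\#\subseteq\calA\}\Bigr)\wedge\Bigl(\bigvee\{z\mid z^\#\subseteq\calB\}\Bigr)=\bigvee\{\,y\wedge z\mid y^\#\subseteq\calA,\ z^\#\subseteq\calB\,\}.
\]
For any pair $y,z$ occurring here one has $(y\wedge z)^\#=y^\#\cap z^\#\subseteq\calA\cap\calB$, so $y\wedge z$ lies among the elements whose supremum defines $\beta_L(\calA\cap\calB)$, whence $y\wedge z\le\beta_L(\calA\cap\calB)$; taking the join over all such pairs yields $\beta_L(\calA)\wedge\beta_L(\calB)\le\beta_L(\calA\cap\calB)$, and combining the two inequalities finishes the proof.

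The one step that is not purely formal --- and the only place where the frame axiom of $L$ is used --- is the second inequality in the binary-meet case, where the distributivity of $L$ and the identity $(x\wedge y)^\#=x^\#\cap y^\#$ are combined; everything else reduces to the adjunction $\beta_L\dashv\alpha_L$, the unit/counit identity $\beta_L\alpha_L=1_L$, and the elementary fact $1^\#=FL$. So I expect the bookkeeping with the two formulas for $\beta_L$ (the ``infimum'' one from its definition and the ``supremum'' one from the Lemma) to be the main, though minor, thing to get right.
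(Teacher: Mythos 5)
Your proof is correct and follows essentially the same route as the paper: the binary-meet case is exactly the paper's computation (the supremum formula from the preceding Lemma, frame distributivity, and $(y\wedge z)^{\#}=y^{\#}\cap z^{\#}$), while your explicit treatment of arbitrary joins via $\beta_L\dashv\alpha_L$ and of the top element via $\beta_L\alpha_L=1_L$ and $FL=1^{\#}$ merely spells out what the paper dismisses as immediate.
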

\begin{proof}
Clearly, $\beta_L(FL)=\top$. Let now $\calA,\calB\in\calO(FL)$. Then
\begin{multline*}
\beta_L(\calA)\wedge\beta_L(\calB)
=\bigvee\{y\in L\mid y^\#\subseteq\calA\}\wedge\bigvee\{z\in L\mid z^\#\subseteq\calB\}\\
=\bigvee\{y\wedge z\mid y^\#\subseteq\calA,z^\#\subseteq\calB\}
=\bigvee\{x\in L\mid x^\#\subseteq\calA\cap\calB\}=\beta_L(\calA\cap\calB).\qedhere
\end{multline*}
\end{proof}
\noindent Hence, for any frame $L$, one has
\[
\xymatrix{FL\ar@/^1.5em/[rr]^{F\alpha_L}_\top\ar@/_1.5em/[rr]_{(\beta_L)_!}^\top && F\calO F(L)\ar[ll]|-{F\beta_L}}
\]
Since $P(FL)\cong F\calO F(L)$ and
\[
 F\alpha_L(\frf)=\langle\{x^\#\mid x\in\frf\}\rangle=\yoneda_{FL}(\frf),
\]
we conclude that $FL$ is a completely distributive T$_0$-space.

\begin{proposition}
$F:\SLAT\to\TOP_{\inf}$ restricts to a 2-functor $F:\FRM_\wedge\to\CDTOP_{\inf}$ where $\FRM_\wedge$ denotes the full subcategory of $\SLAT$ defined by those meet-semilattices which are frames, and $\CDTOP_{\inf}$ denotes the 2-category of completely distributive T$_0$-spaces and right adjoint continuous maps.
\end{proposition}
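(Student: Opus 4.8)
The strategy is to deduce the statement formally from material already at hand. Both $\FRM_\wedge$ and $\CDTOP_{\inf}$ are \emph{full} sub-2-categories, of $\SLAT$ and $\TOP_{\inf}$ respectively: on the left one keeps all meet-semilattice homomorphisms between frames (with the pointwise order on hom-sets), on the right all right adjoint continuous maps between completely distributive $T_0$-spaces (again with the pointwise order). Since we already know that $F:\SLAT\to\TOP_{\inf}$ is a 2-functor, nothing needs to be checked on 1-cells or 2-cells in order to corestrict it; the only thing to verify is that $F$ sends each \emph{object} of $\FRM_\wedge$ into $\CDTOP_{\inf}$, i.e.\ that $FL$ is a completely distributive $T_0$-space for every frame $L$.

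This, however, is precisely what was established just above the statement. For the record: $FL$ is separated because its underlying order is reverse inclusion; and, $L$ being in particular a complete lattice, one has $\beta_L\dashv\alpha_L$ in $\SLAT$, hence $F\beta_L\dashv F\alpha_L$ by 2-functoriality of $F$, while $(\beta_L)_!\dashv F\beta_L$ holds in $\TOP$. Together with $P(FL)\cong F\calO F(L)$ and $F\alpha_L=\yoneda_{FL}$ this gives the adjoint string $(\beta_L)_!\dashv F\beta_L\dashv\yoneda_{FL}$, which displays $FL$ as cocomplete with $\Sup_{FL}\cong F\beta_L$ and then as completely distributive with $t_{FL}\cong(\beta_L)_!$. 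Hence $F$ corestricts to a 2-functor $F:\FRM_\wedge\to\CDTOP_{\inf}$.

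I do not expect a genuine obstacle: the substantial content --- that $FL$ is completely distributive whenever $L$ is a frame --- has already been proved, and what is left is bookkeeping about full sub-2-categories. The one point worth a moment's attention is that the hom-orders of $\FRM_\wedge$ and $\CDTOP_{\inf}$ are exactly those inherited from $\SLAT$ and $\TOP_{\inf}$, so that the local monotonicity of $F$ recorded in the previous proposition is automatically retained by the corestriction.
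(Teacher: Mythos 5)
Your proposal is correct and matches the paper's (implicit) argument: the proposition is stated with no separate proof precisely because the preceding discussion — the adjoint string $(\beta_L)_!\dashv F\beta_L\dashv F\alpha_L$ together with $P(FL)\cong F\calO F(L)$, $F\alpha_L=\yoneda_{FL}$ and separatedness of $FL$ — already shows $FL$ is a completely distributive T$_0$-space, and fullness (and local fullness) of $\FRM_\wedge\subseteq\SLAT$ and $\CDTOP_{\inf}\subseteq\TOP_{\inf}$ makes the corestriction automatic on 1-cells and 2-cells. Your only addition is making explicit that $F\beta_L\dashv F\alpha_L$ comes from $\beta_L\dashv\alpha_L$ via 2-functoriality, which is exactly the reasoning the paper leaves tacit.
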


To show that $F:\FRM_\wedge\to\CDTOP_{\inf}$ is an equivalence of categories, we will now describe its inverse $\Pt:\CDTOP_{\inf}\to\FRM_\wedge$. To motivate our construction, note that this functor should send a completely distributive space $Y$ of the form $Y\cong PX$ for $X\in\TOP$ to the frame $\calO X\cong\TOP(X,\two)^\op$ of opens of $X$. By the universal property of the Yoneda embedding, 
\[
 \mathrm{LeftAdjoint}(PX,2)\to\TOP(X,\two),\,g\mapsto g\cdot\yoneda_X
\]
is an order isomorphism. Its inverse sends $\varphi:X\to\two$ to the left adjoint
\begin{equation}\label{LeftKan}
 \varphi_L:=\Sup_\two\cdot P\varphi:PX\to\two.
\end{equation}
Therefore we consider, for any topological space $X$,
\[
 \Lambda(X):=\{\varphi:X\to\two\mid \text{$f$ is continuous and left adjoint}\}
\]
which becomes an ordered set with the pointwise order. In the sequel we will write $\calC(X)$ for the coframe of all continuous maps of type $X\to\two$. Note that $\varphi:X\to\two$ is left adjoint in $\TOP$ if and only if it is continuous and left adjoint in $\ORD$ (with respect to the underlying orders). The first hint that we are on the right track is
\begin{lemma}\label{CompNatTransFrm}
For each frame $L$, the map $\rho_L:L\to\Lambda(FL)^\op$ sending $x\in L$ to
\[
\varphi_x:FL\to\two,\,\frf\mapsto
\begin{cases}
 1 & x\notin\frf\\
 0 & x\in\frf
\end{cases}
\]
is an order-isomorphism.
\end{lemma}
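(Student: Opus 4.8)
The plan is to verify three things in turn: that $\rho_L$ lands in $\Lambda(FL)$, that it is a full order-embedding, and that it is surjective; the only step with real content is the last one. For well-definedness I would note that $\varphi_x^{-1}(\{0\})=\{\frf\in FL\mid x\in\frf\}=x^\#$ is a basic open of $FL$ and $\{0\}$ is open in $\two$, so $\varphi_x$ is continuous; and that $\varphi_x$ is a left adjoint in $\ORD$ because the underlying order of $FL$ is reverse inclusion, in which suprema are intersections of filters, and $\varphi_x\bigl(\bigcap_i\frf_i\bigr)=0\iff x\in\bigcap_i\frf_i\iff\bigvee_i\varphi_x(\frf_i)=0$, so $\varphi_x$ preserves all suprema of the complete lattice $FL$ and hence has a right adjoint (explicitly $0\mapsto\upc x$, $1\mapsto\{\top_L\}$). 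By the remark recorded just before the lemma, continuity together with left-adjointness in $\ORD$ gives left-adjointness in $\TOP$, so $\varphi_x\in\Lambda(FL)$.

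For the order-embedding I would show $x\le y$ in $L$ if and only if $\varphi_y\le\varphi_x$ pointwise, which is exactly $\rho_L(x)\le\rho_L(y)$ in $\Lambda(FL)^{\op}$. If $x\le y$ and $\varphi_x(\frf)=0$ then $x\in\frf$, hence $y\in\frf$ since $\frf$ is up-closed, so $\varphi_y(\frf)=0$; thus $\varphi_y\le\varphi_x$. Conversely, evaluating $\varphi_y\le\varphi_x$ at $\frf=\upc x$ gives $\varphi_y(\upc x)=0$, i.e.\ $y\in\upc x$, i.e.\ $x\le y$. Since $L$ is a frame and hence antisymmetric, this in particular makes $\rho_L$ injective.

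For surjectivity, let $\varphi\in\Lambda(FL)$ with right adjoint $\psi:\two\to FL$ in $\ORD$. I would argue that $U:=\varphi^{-1}(\{0\})$ is open (continuity of $\varphi$) and, by the adjunction, equals $\{\frf\mid\frf\le\psi(0)\}=\{\frf\in FL\mid\psi(0)\subseteq\frf\}$, which contains $\psi(0)$. Openness then yields a basic open $x^\#$ with $\psi(0)\in x^\#\subseteq U$; membership $\psi(0)\in x^\#$ gives $x\in\psi(0)$, i.e.\ $\upc x\subseteq\psi(0)$, while $\upc x\in x^\#\subseteq U$ gives $\psi(0)\subseteq\upc x$, so $\psi(0)=\upc x$ is principal. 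Hence $U=\{\frf\mid x\in\frf\}=\varphi_x^{-1}(\{0\})$, and since $\varphi$ and $\varphi_x$ both take values in $\two$ this forces $\varphi=\varphi_x=\rho_L(x)$.

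I expect this last step to be the only genuine obstacle: it is precisely the interplay between continuity of $\varphi$ (which makes $\varphi^{-1}(\{0\})$ open), its left-adjointness (which identifies $\varphi^{-1}(\{0\})$ with the principal down-set of $\psi(0)$ in the underlying order of $FL$), and the explicit basis $\{x^\#\mid x\in L\}$ of $FL$ that forces $\psi(0)$ to be a principal filter. The other two steps are routine bookkeeping with the definitions of $FL$, its topology, and its underlying order.
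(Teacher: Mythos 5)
Your proof is correct and follows essentially the same route as the paper: continuity via $\varphi_x^{-1}(0)=x^\#$, left-adjointness via preservation of suprema (intersections) in $FL$, the embedding via evaluation at principal filters, and surjectivity by locating a basic open $x^\#$ around the largest filter sent to $0$. The only cosmetic difference is that you phrase that largest filter as $\psi(0)$ for the right adjoint $\psi$, whereas the paper takes it as $\bigvee\varphi^{-1}(0)$ using sup-preservation; these coincide, so the arguments are the same.
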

\begin{proof}
First note that $\varphi_x$ is the characteristic map of the complement of $x^\#$, hence it is continuous. Furthermore, $\varphi_x$ preserves suprema (=intersection), hence it is left adjoint. From
\[
 x\le y\iff\forall\frf\in FL\,.\,(x\in\frf\Rw y\in\frf)\iff \varphi_y\le\varphi_x
\]
we deduce that $L\to\Lambda(FL)^\op$ is an order-embedding. Let now $\varphi:FL\to\two$ be continuous and left adjoint. Put $\calB=\varphi^{-1}(0)$ and $\frf=\bigvee\calB$. Since $\varphi$ preserves suprema, $\varphi(\frf)=0$ and therefore $\frf\in\calB$. Since $\calB$ is open, there is some $x\in\frf$ with $x^\#\subseteq\calB$. Hence $\upc x\le\frf$, that is, $\frf\subseteq\upc x$, and therefore $\frf=\upc x$. We conclude that $\varphi=\varphi_x$.
\end{proof}

\begin{proposition}
Let $X$ be a completely distributive spaces with $t_X\dashv\Sup_X\dashv\yoneda_X$. Then the inclusion map $i:\Lambda(X)\to\calC(X)$ has a right adjoint $r:\calC(X)\to\Lambda(X)$ given by $r(\varphi)=\varphi_L\cdot t_X$ (see \eqref{LeftKan}). Moreover, $r$ preserves finite suprema.
\end{proposition}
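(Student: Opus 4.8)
The plan is to realise $r$ as the composite $\calC(X)=\TOP(X,\two)\xrightarrow{(-)_L}\mathrm{LeftAdjoint}(PX,\two)\xrightarrow{(-)\cdot t_X}\Lambda(X)$ and to reduce the statement to formal properties of the adjoint string $t_X\dashv\Sup_X\dashv\yoneda_X$ together with the order-isomorphism recorded in \eqref{LeftKan}. First I would check that $r$ is well defined into $\Lambda(X)$: the map $\varphi_L=\Sup_\two\cdot P\varphi:PX\to\two$ is left adjoint in $\TOP$, being the composite of $P\varphi=-\kleisli\varphi^*$ (left adjoint to $-\kleisli\varphi_*$) and of $\Sup_\two$ (left adjoint to $\yoneda_\two$, since $\two\cong P1$ is cocomplete); and $t_X$ is left adjoint by complete distributivity, so $r(\varphi)=\varphi_L\cdot t_X$ is left adjoint in $\TOP$, hence continuous and left adjoint for the underlying orders, i.e.\ $r(\varphi)\in\Lambda(X)$. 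Monotonicity of $r$ is clear since $(-)_L$ is monotone (it is the inverse of the order-isomorphism $g\mapsto g\cdot\yoneda_X$) and precomposition with $t_X$ is monotone.

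Next I would prove $i\dashv r$ by establishing the two inequalities that determine an adjunction between posets. For $i\cdot r\le\mathrm{id}_{\calC(X)}$ I would first note $t_X\le\yoneda_X$, which is immediate from the string ($t_X(x)\le\yoneda_X(y)\iff x\le\Sup_X(\yoneda_X(y))=y$, now put $y=x$), and deduce $r(\varphi)=\varphi_L\cdot t_X\le\varphi_L\cdot\yoneda_X=\varphi$. For the other half I would prove the stronger $r\cdot i=\mathrm{id}_{\Lambda(X)}$: applying $\Sup_X$ to $t_X\le\yoneda_X$ gives $\Sup_X\cdot t_X\le\Sup_X\cdot\yoneda_X=\mathrm{id}_X$, which together with the unit $\mathrm{id}_X\le\Sup_X\cdot t_X$ of $t_X\dashv\Sup_X$ yields $\Sup_X\cdot t_X=\mathrm{id}_X$ (our completely distributive spaces being separated); and since every $\psi\in\Lambda(X)$ is a left adjoint map between cocomplete spaces, it preserves suprema, so $\psi_L=\psi\cdot\Sup_X$, whence $r(i\psi)=\psi_L\cdot t_X=\psi\cdot\Sup_X\cdot t_X=\psi$. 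From $r$ monotone, $r\cdot i=\mathrm{id}$ and $i\cdot r\le\mathrm{id}$ the adjunction follows formally: $i\psi\le\varphi$ implies $\psi=r(i\psi)\le r\varphi$, and $\psi\le r\varphi$ implies $i\psi\le i(r\varphi)\le\varphi$.

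For preservation of finite suprema, the empty join is immediate: $\bot\in\calC(X)$ is the constant map $0$, which is left adjoint (its right adjoint is the constant map at the top element of $X$, which exists since $X$ is cocomplete), so $\bot\in\Lambda(X)$ and $r(\bot)=r(i\bot)=\bot$. For binary joins, since $i\dashv r$ exhibits $\Lambda(X)$ as a coreflective subposet of $\calC(X)$, its finite joins are computed in $\calC(X)$, hence pointwise; and as $(-)\cdot t_X$ commutes with pointwise joins it suffices to prove $(\varphi\vee\varphi')_L=\varphi_L\vee\varphi'_L$ pointwise on $PX$. Here I would use the homeomorphism $PX\cong F_0(X)$ of Example \ref{Psh_vs_Filter}: under it $\varphi_L$ is the map sending $\frf\in F_0(X)$ to $0$ precisely when $\varphi^{-1}(0)\in\frf$, since this map is continuous (its $0$-fibre is the basic open $(\varphi^{-1}(0))^\#$), preserves suprema (joins of filters are intersections), and restricts along $\yoneda_X$ to $\varphi$ (because $\yoneda_X(x)$ corresponds to the filter of open neighbourhoods of $x$), so uniqueness of left adjoints identifies it with $\varphi_L$. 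As $(\varphi\vee\varphi')^{-1}(0)=\varphi^{-1}(0)\cap\varphi'^{-1}(0)$ and $\frf$ is a filter, membership of the former in $\frf$ is equivalent to membership of both $\varphi^{-1}(0)$ and $\varphi'^{-1}(0)$ in $\frf$, which is exactly the pointwise identity we need; hence $r(\varphi\vee\varphi')=r(\varphi)\vee r(\varphi')$.

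The genuinely non-formal point, and the one I expect to be the main obstacle, is this last pointwise identity $(\varphi\vee\varphi')_L=\varphi_L\vee\varphi'_L$: everything else is bookkeeping with the adjoint string, whereas here one needs a description of $\varphi_L$ concrete enough --- supplied by the filter space $F_0(X)$ --- to see that $(-)_L$ turns finite intersections of opens into pointwise joins, and it is exactly at this point that filteredness is used in an essential way. A secondary item to pin down carefully is the claim that a continuous left adjoint map between cocomplete spaces preserves suprema (used to get $\psi_L=\psi\cdot\Sup_X$, hence $r\cdot i=\mathrm{id}$); this is standard but should be spelled out.
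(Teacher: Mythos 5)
Your proof of the adjunction half coincides with the paper's: well-definedness of $r$ as a composite of left adjoints, $i\cdot r\le 1$ from $t_X\le\yoneda_X$, and $r\cdot i=1$ from $\Sup_X\cdot t_X=1$ together with the fact that a left adjoint $\psi:X\to\two$ satisfies $\psi_L=\psi\cdot\Sup_X$ (the paper uses exactly the identity $\varphi\cdot\Sup_X=\Sup_\two\cdot P\varphi$ without further comment, so your flagged ``standard fact'' is at the paper's own level of detail; note also that the paper's remark just before Lemma \ref{CompNatTransFrm}, that left adjointness in $\TOP$ for continuous maps reduces to order-theoretic left adjointness, covers the point you worried about). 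Where you genuinely diverge is the preservation of finite suprema. The paper argues abstractly: $r$ is the corestriction of the composite $\calC(X)\cong\Lambda(PX)\hookrightarrow\calC(PX)\xrightarrow{-\cdot t_X}\calC(X)$, where the first map is the order-isomorphism \eqref{LeftKan}, the inclusion is a left adjoint by the already-proved adjunction half applied to the completely distributive space $PX$ (hence preserves suprema), and precomposition with the continuous map $t_X$ is a coframe homomorphism; this is shorter, needs no concrete model of $PX$, and is the kind of argument one could hope to transport to the approach case. You instead work concretely in the filter space $F_0(X)$ of Example \ref{Psh_vs_Filter}, identify $\varphi_L$ explicitly as $\frf\mapsto 0\iff\varphi^{-1}(0)\in\frf$ (your identification is correct, and your justification mirrors the paper's own Lemma \ref{CompNatTransFrm}), and then read off $(\varphi\vee\varphi')_L=\varphi_L\vee\varphi'_L$ from closure of filters under binary intersections, plus the observation that joins in the coreflective subposet $\Lambda(X)$ are pointwise. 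Your route is more computational and tied to the topological/filter description, but it has the virtue of exposing exactly where filteredness does the work, which the paper's factorisation hides inside the statement that $-\cdot t_X$ is a coframe homomorphism; both arguments are sound.
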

\begin{proof}
First note that $r(\varphi)$ is left adjoint since it is a composite of left adjoint. Furthermore, $i\cdot r\le 1$ since $\varphi=\varphi_L\cdot\yoneda_X\ge \varphi_L\cdot t_X$ for any $\varphi\in\calC(X)$, and $r\cdot i=1$ since $\varphi=\varphi\cdot\Sup_X\cdot t_X=\Sup_\two\cdot P\varphi\cdot t_X=\varphi_L\cdot t_X$ for each left adjoint $\varphi:X\to\two$. Finally, $r:\calC(X)\to\Lambda(X)$ is the corestriction of
\[
 \calC(X)\xrightarrow{\;\cong\;}\Lambda(PX)\xrightarrow{\;\text{left adjoint}\;}\calC(PX)
\xrightarrow{\;\text{coframe homom.\ induced by }t_X\;}\calC(X),
\]
therefore $r$ preserves finite suprema.
\end{proof}
\begin{corollary}
For each completely distributive spaces $X$, $\Lambda(X)$ is a coframe.
\end{corollary}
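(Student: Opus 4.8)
The plan is to obtain the coframe structure on $\Lambda(X)$ by transporting it along the adjunction of the previous proposition, exhibiting $\Lambda(X)$ as a quotient of the coframe $\calC(X)$. Recall that $\calC(X)$, the poset of continuous maps $X\to\two$ with the pointwise order, is a coframe: it is isomorphic to the lattice of closed subsets of $X$, hence has arbitrary infima and its finite suprema are unions, and the coframe law $\varphi\vee\bigwedge_j\varphi_j=\bigwedge_j(\varphi\vee\varphi_j)$ is just elementary set theory. The preceding proposition supplies the order-embedding $i\colon\Lambda(X)\hookrightarrow\calC(X)$ together with a right adjoint $r$ with $r\cdot i=1$ which, crucially, preserves finite suprema (being a right adjoint, it automatically preserves all infima).

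First I would record that $\Lambda(X)$ is a complete lattice, by giving the explicit formula $\bigwedge_j\varphi_j=r\bigl(\bigwedge_j i\varphi_j\bigr)$ for infima of a family $(\varphi_j)_j$ in $\Lambda(X)$, the inner infimum being taken in $\calC(X)$. This is immediate from the adjunction: for $\varphi\in\Lambda(X)$ one has $\varphi\le r(\bigwedge_j i\varphi_j)$ iff $i\varphi\le\bigwedge_j i\varphi_j$ iff $i\varphi\le i\varphi_j$ for all $j$ iff $\varphi\le\varphi_j$ for all $j$, using that $i$ is an order-embedding. Thus $\Lambda(X)$ has all infima and so is a complete lattice, in particular it has finite suprema; moreover $i$, being a left adjoint, preserves them, so $i(\varphi\vee\varphi')=i\varphi\vee i\varphi'$ in $\calC(X)$ for $\varphi,\varphi'\in\Lambda(X)$.

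It then remains to verify distributivity, and here the two properties of $r$ enter. The inequality $\varphi\vee\bigwedge_j\varphi_j\le\bigwedge_j(\varphi\vee\varphi_j)$ holds in any lattice, so only the reverse one is at stake; using the infimum formula, that $i$ preserves finite suprema, the coframe law in $\calC(X)$, that $r$ preserves finite suprema, and finally $r\cdot i=1$ together with the infimum formula once more, one computes
\[
\bigwedge_j(\varphi\vee\varphi_j)=r\Bigl(\bigwedge_j(i\varphi\vee i\varphi_j)\Bigr)=r\Bigl(i\varphi\vee\bigwedge_j i\varphi_j\Bigr)=\varphi\vee r\Bigl(\bigwedge_j i\varphi_j\Bigr)=\varphi\vee\bigwedge_j\varphi_j,
\]
which gives the claim. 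The only genuine input is the preceding proposition (notably that $r$ preserves finite suprema), and the single step that uses this rather than merely the existence of the adjoint is the third equality above; the rest is formal, the one thing to keep straight being the asymmetry of the situation ($r$ preserves all infima and finite suprema, $i$ preserves all suprema) and the arbitrary-versus-finite distinction. So I expect no real obstacle here beyond bookkeeping, the substantive work having already been done in the proposition.
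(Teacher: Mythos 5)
Your argument is correct and is exactly the (implicit) argument the paper intends: the corollary is stated without proof as an immediate consequence of the preceding proposition, namely transporting the coframe structure of $\calC(X)$ along the adjunction $i\dashv r$ with $r\cdot i=1$ and $r$ preserving finite suprema. Your explicit computation of infima in $\Lambda(X)$ as $r\bigl(\bigwedge_j i\varphi_j\bigr)$ and the four-step chain verifying the coframe law is just the standard spelling-out of that transport, so there is nothing to add.
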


For any left adjoint $g:Y\to X$ in $\TOP$, composition with $g$ defines a monotone map
\[
\Lambda(g):\Lambda(X)\to\Lambda(Y),\,\varphi\mapsto\varphi\cdot g.
\]
Furthermore, since
\[
\xymatrix{\Lambda(X)\ar[r]^{\Lambda(g)}\ar[d] & \Lambda(Y)\ar[d]\\
\calC(X)\ar[r]_{\calC(g)} & \calC(Y)}
\]
commutes, $\Lambda(g)$ preserves finite suprema. For $X$ in $\CDTOP_{\inf}$ we put $\Pt(X):=\Lambda(X)^\op$, and for $f:X\to Y$ in $\CDTOP_{\inf}$ with left adjoint $g:Y\to X$ we define $\Pt(f)=\Lambda(g)^\op$. Then

\begin{proposition}
$\Pt:\CDTOP_{\inf}\to\FRM_\wedge$ is a 2-functor.
\end{proposition}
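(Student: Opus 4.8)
The plan is straightforward: by this point essentially all the real content is in hand, and what remains is to assemble it and check the (2\nobreakdash-)functor axioms. First I would record that $\Pt$ is well defined on objects: for $X$ in $\CDTOP_{\inf}$ the ordered set $\Lambda(X)$ is a coframe by the corollary just established, so $\Pt(X)=\Lambda(X)^{\op}$ is indeed a frame, i.e.\ an object of $\FRM_\wedge$. Next I would treat morphisms: given a right adjoint continuous map $f:X\to Y$, the left adjoint $g:Y\to X$ of $f$ in $\TOP$ is \emph{uniquely} determined because $X$ is a T$_0$-space (anti-symmetry of the underlying order forces adjoints to be literally unique, not just up to isomorphism), so $\Pt(f):=\Lambda(g)^{\op}$ is unambiguous; and since we have already shown that $\Lambda(g):\Lambda(X)\to\Lambda(Y)$ preserves finite suprema (via the commuting square with $\calC(g)$), the map $\Lambda(g)^{\op}$ preserves finite infima and hence is a morphism of $\FRM_\wedge$.

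Then I would verify functoriality, which reduces to functoriality of the assignment $g\mapsto\Lambda(g)$. Preservation of identities is immediate, since $1_X\dashv 1_X$ and $\Lambda(1_X)(\varphi)=\varphi\cdot 1_X=\varphi$. For a composite $X\xrightarrow{f}Y\xrightarrow{f'}Z$ in $\CDTOP_{\inf}$ with left adjoints $g\dashv f$ and $g'\dashv f'$, I would invoke the standard fact that $g\cdot g'\dashv f'\cdot f$, note that $g\cdot g'$ is (again by separateness of $X$) precisely the left adjoint entering the definition of $\Pt(f'\cdot f)$, and then use the elementary identity $\Lambda(g\cdot g')(\varphi)=\varphi\cdot g\cdot g'=\Lambda(g')\bigl(\Lambda(g)(\varphi)\bigr)$ to get $\Lambda(g\cdot g')=\Lambda(g')\cdot\Lambda(g)$; passing to opposite orders turns this into $\Pt(f'\cdot f)=\Pt(f')\cdot\Pt(f)$.

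For the $2$-dimensional part I would take $f\le f'$ in $\CDTOP_{\inf}$ and use that passing to left adjoints reverses the order, so that $g'\le g$; since every $\varphi\in\Lambda(X)$ is monotone this gives $\varphi\cdot g'\le\varphi\cdot g$, i.e.\ $\Lambda(g')\le\Lambda(g)$ pointwise, and reversing the order on $\Lambda(Y)$ yields $\Pt(f)=\Lambda(g)^{\op}\le\Lambda(g')^{\op}=\Pt(f')$. Thus $\Pt$ is locally monotone, hence a strict $2$-functor.

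I do not anticipate a genuine obstacle here; the only points that need real care are the well-definedness issues — that the chosen left adjoint $g$ is unique, and that $g\cdot g'$ is the canonical left adjoint of $f'\cdot f$ — and these are exactly where the T$_0$\nobreakdash-hypothesis built into $\CDTOP_{\inf}$ gets used. Without separation one would be forced into pseudo-functors, but inside $\CDTOP_{\inf}$ everything is strict, so the argument stays purely formal.
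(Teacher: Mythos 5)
Your verification is correct and is exactly the routine argument the paper leaves implicit: it states this proposition without proof, relying on the preceding facts that $\Lambda(g)$ preserves finite suprema, that left adjoints are strictly unique by the T$_0$ hypothesis, and that taking adjoints reverses the order of $2$-cells. Your assembly of these points (objects, well-definedness, functoriality, local monotonicity) matches the intended proof.
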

Furthermore, we revise Lemma \ref{CompNatTransFrm}:
\begin{lemma}
$\rho_L$ is the $L$-component of a natural isomorphism $\rho:1_{\FRM_\wedge}\to\Pt F$.
\end{lemma}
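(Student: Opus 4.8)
The plan is to build on Lemma~\ref{CompNatTransFrm}, which already supplies, for every frame $L$, an order-isomorphism $\rho_L\colon L\to\Lambda(FL)^\op=\Pt F(L)$. Since $L$ is a frame and $\Lambda(FL)$ is a coframe (so $\Lambda(FL)^\op$ is a frame), an order-isomorphism between them automatically preserves all existing meets and joins and is therefore a morphism of $\FRM_\wedge$; hence each $\rho_L$ is already an isomorphism in $\FRM_\wedge$. What remains is naturality: for a frame homomorphism $h\colon L\to M$ one must check that
\[
\xymatrix{L\ar[r]^-{\rho_L}\ar[d]_h & \Pt F(L)\ar[d]^{\Pt F(h)}\\ M\ar[r]_-{\rho_M} & \Pt F(M)}
\]
commutes. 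Because the hom-orders in $\FRM_\wedge$ and $\CDTOP_{\inf}$ are pointwise and the relevant maps are monotone, it suffices to check this square of underlying functions, and $2$-naturality is then automatic.

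First I would unwind $\Pt F(h)$. Recall that $F\colon\SLAT\to\TOP_{\inf}$ sends $h$ to the \emph{right} adjoint continuous map $Fh\colon FL\to FM$, whose left adjoint is $h_!\colon FM\to FL,\ \frg\mapsto h^{-1}(\frg)$. By the definition of $\Pt$ on morphisms (it takes the left adjoint and then dualises), $\Pt F(h)=\Lambda(h_!)^\op$, and $\Lambda(h_!)$ sends $\psi\in\Lambda(FL)$ to $\psi\cdot h_!\in\Lambda(FM)$. Thus, as a function, $\Pt F(h)$ carries $\varphi_x=\rho_L(x)$ to $\varphi_x\cdot h_!$.

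Now I would evaluate $\varphi_x\cdot h_!$ pointwise. For $\frg\in FM$ one has $(\varphi_x\cdot h_!)(\frg)=\varphi_x(h^{-1}(\frg))$, which by the definition of $\varphi_x$ equals $0$ precisely when $x\in h^{-1}(\frg)$, i.e.\ when $h(x)\in\frg$, and equals $1$ otherwise; that is exactly $\varphi_{h(x)}(\frg)$. Hence $\Pt F(h)(\rho_L(x))=\varphi_{h(x)}=\rho_M(h(x))$, so the square commutes, and together with Lemma~\ref{CompNatTransFrm} this exhibits $\rho=(\rho_L)_L$ as a natural isomorphism $1_{\FRM_\wedge}\to\Pt F$. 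I do not expect a real obstacle: the entire substance already lies in Lemma~\ref{CompNatTransFrm}, and the only care needed is the $\op$-bookkeeping — remembering that $F(h)$ is the right adjoint $Fh$, so that $\Pt$ produces its left adjoint $h_!$ — after which the verification is the one-line computation above.
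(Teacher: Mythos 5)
Your proposal is correct and matches the paper's proof: the paper disposes of naturality by invoking the identity $h_!^{-1}(x^\#)=h(x)^\#$ (equation \eqref{eqforrho}), which is precisely your pointwise computation that $\varphi_x\cdot h_!=\varphi_{h(x)}$, the isomorphism part being Lemma \ref{CompNatTransFrm} in both cases. The only difference is that you spell out the $\op$/adjoint bookkeeping explicitly, which the paper leaves implicit.
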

\begin{proof}
Use \eqref{eqforrho} to conclude naturality.
\end{proof}
For a space $X$ in $\CDTOP_{\inf}$, we put
\[
 \sigma_X:X\to F\Pt(X),\,x\mapsto\{\varphi\in\Lambda(X)\mid\varphi(x)=0\}.
\]
\begin{lemma}
$\sigma_X$ is surjective.
\end{lemma}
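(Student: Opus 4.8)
The plan is to make a point of $F\Pt(X)$ explicit, read off what its $\sigma_X$-preimage must be, and verify the guess using that $X$ is cocomplete. Each $\varphi\in\Lambda(X)$ is continuous and left adjoint in $\ORD$, so $\varphi^{-1}(0)$ is an open down-set closed under suprema, hence $\varphi^{-1}(0)=\downc a_\varphi$ for $a_\varphi:=\bigvee\varphi^{-1}(0)$. The assignment $\varphi\mapsto a_\varphi$ is an order-reversing bijection of $\Lambda(X)$ onto $O:=\{a\in X\mid\downc a\text{ open}\}$, with inverse sending $a$ to the characteristic map of $X\setminus\downc a$. Since $\downc a\cap\downc b=\downc(a\wedge b)$ and $\downc\top=X$, the set $O$ is closed under binary meets and contains the top of $X$, and this bijection identifies the meet-semilattice $\Pt(X)=\Lambda(X)^{\op}$ with $O$ (using that finite joins in $\Lambda(X)$ are pointwise, as they are computed in $\calC(X)$ and $\Lambda(X)\hookrightarrow\calC(X)$ is a left adjoint). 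Consequently a filter of $\Pt(X)$ is nothing but an up-closed, $\wedge$-closed subset $\frf\subseteq O$ containing $\top$, and under the identification $\sigma_X(x)$ becomes $\upc x\cap O=\{a\in O\mid x\le a\}$.

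So, given a filter $\frf\subseteq O$, I would put $x:=\bigwedge_X\frf$; this infimum exists because a cocomplete ordered set is a complete lattice. The inclusion $\frf\subseteq\upc x\cap O$ is immediate. For the reverse inclusion, let $a\in O$ with $x=\bigwedge\frf\le a$; since $\frf$ is up-closed in $O$ it suffices to produce some $g\in\frf$ with $g\le a$. Writing $\varphi_a\in\Lambda(X)$ for the characteristic map of $X\setminus\downc a$ (so $\varphi_a^{-1}(0)=\downc a$), the key observation is that the continuous map $\varphi_a:X\to\two$ preserves down-directed infima; as $\frf$ is down-directed (being closed under binary meets) and $\varphi_a(x)=0$ (because $x\le a$), this gives $0=\varphi_a(\bigwedge\frf)=\bigwedge_{g\in\frf}\varphi_a(g)$, hence $\varphi_a(g)=0$, i.e.\ $g\le a$, for some $g\in\frf$. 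Thus $\upc x\cap O=\frf$, i.e.\ $\sigma_X(x)=\frf$, and $\sigma_X$ is surjective.

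The only genuinely non-formal ingredient is the key observation above: every continuous map $X\to\two$ preserves down-directed infima, equivalently every open subset of $X$ is inaccessible by down-directed infima. This is where cocompleteness really enters, via the classical fact that a cocomplete $T_0$-space is (up to homeomorphism) a continuous lattice equipped with its Scott topology, so that its open sets are exactly the Scott-open ones. Here one must watch the conventions: the topology of $X$ is the Scott topology for the order \emph{opposite} to the ``underlying order'' $\le$ used in the paper, so a down-directed family in the sense of $\le$ with infimum $\bigwedge\frf$ is precisely a directed family with supremum $\bigwedge\frf$ in the order governing Scott-openness. Granting this point, everything else in the argument is bookkeeping.
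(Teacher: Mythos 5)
Your proof is correct and is essentially the paper's own argument: you construct the very same point, namely the codirected infimum $x=\bigwedge_{\varphi\in\frf}\bigvee\varphi^{-1}(0)$, and verify $\sigma_X(x)=\frf$ by the same two steps, the nontrivial one resting on the fact that continuous maps into $\two$ on a cocomplete ($=$ injective) space preserve codirected infima; your identification of $\Lambda(X)$ with $\{a\in X\mid\downc a\text{ open}\}$ is only a transparent repackaging of the paper's passage from $\varphi$ to $x_\varphi=\bigvee\varphi^{-1}(0)$. If anything, you spell out the key ``open sets are Scott-open for the dual order'' ingredient more explicitly than the paper, which invokes it simply as ``by continuity''.
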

\begin{proof}
Let $\frj\subseteq\Lambda(X)$ be an ideal. For any $\varphi\in\frj$, put $A_\varphi:=\{x\in X\mid\varphi(x)=0\}$ and $x_\varphi:=\bigvee A_\varphi$. Since $x_\psi\le x_\varphi$ for $\varphi\le\psi\in\frj$, the association $\varphi\mapsto x_\varphi$ defines a codirected diagram $D:\frj^\op\to X$. Let $x=\bigwedge_{\varphi\in\frj}x_\varphi$. By continuity, $\varphi(x)=0$ for every $\varphi\in\frj$. Let now $\varphi_0\in\Lambda(X)$ with $\varphi_0\notin\frj$. For any $\varphi\in\frj$, $\varphi_0\not\le\varphi$ and therefore there is some $x\in A_\varphi$ with $\varphi_0(x)=1$, hence $\varphi_0(x_\varphi)=1$. Consequently, $\varphi_0(x)=1$.
\end{proof}

By definition, any space $X=FL$ for some frame $L$ has a basis for the closed sets formed by the complements of the opens $x^\#$ ($x\in L$). The characteristic map of such a basic closed set is left adjoint (see Lemma \ref{CompNatTransFrm}), hence any $\varphi\in\calC(X)$ is the infimum of elements of $\Lambda(X)$. Via the adjunction $t_X\dashv\Sup_X$ one can transport this property to any completely distributive space $X$ as follows. For any $\varphi\in\calC(X)$, $\varphi\cdot\Sup_X\in\calC(PX)$, hence $\varphi\cdot\Sup_X\cong\bigwedge_i\varphi_i$ in $\calC(PX)$ with all $\varphi_i:PX\to\two$ left adjoint, and therefore $\varphi\cong\varphi\cdot\Sup_X\cdot t_X\cong(\bigwedge_i\varphi_i)\cdot t_X\cong\bigwedge_i(\varphi_i\cdot t_X)$.
\begin{lemma}\label{LemmaSigmaInjective}
For each completely distributive space $X$ and $x,y\in X$ with $x\not\cong y$, $\sigma_X(x)\neq\sigma(y)$.
\end{lemma}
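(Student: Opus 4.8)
The plan is to establish the sharper statement that, for all $x,y\in X$,
\[
x\le y\iff\sigma_X(y)\subseteq\sigma_X(x);
\]
the lemma then follows at once, since $\sigma_X(x)=\sigma_X(y)$ forces both inclusions, hence $x\le y$ and $y\le x$, that is $x\cong y$. The implication from left to right I would dispatch immediately: a map $\varphi\in\Lambda(X)$ is in particular continuous, hence monotone for the underlying orders, so $\varphi(x)\le\varphi(y)$ in $\two$; thus $\varphi(y)=0$ forces $\varphi(x)=0$, which is exactly $\sigma_X(y)\subseteq\sigma_X(x)$.

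For the reverse implication I would argue by contraposition. Assuming $x\not\le y$, i.e.\ that $\doo{x}\to y$ fails, there is an open set $U\subseteq X$ with $y\in U$ and $x\notin U$, and the characteristic map $\varphi\in\calC(X)$ of the closed set $X\setminus U$ satisfies $\varphi(y)=0$ and $\varphi(x)=1$. At this point I would invoke the approximation established in the paragraph just before the lemma: in the coframe $\calC(X)$ one has $\varphi\cong\bigwedge_{i\in I}(\varphi_i\cdot t_X)$ for a family of left-adjoint continuous maps $\varphi_i:PX\to\two$. Since $t_X$ is itself continuous and left adjoint, each $\psi_i:=\varphi_i\cdot t_X$ lies in $\Lambda(X)$; and since the order and infima of $\calC(X)$ are computed pointwise in $\two$, the equality $\varphi(x)=1$ gives $\psi_i(x)=1$ for every $i$, while $\varphi(y)=0$ produces an index $i_0$ with $\psi_{i_0}(y)=0$. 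Then $\psi_{i_0}\in\sigma_X(y)\setminus\sigma_X(x)$, so $\sigma_X(y)\not\subseteq\sigma_X(x)$, completing the contrapositive.

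I do not expect a serious obstacle here; the argument is largely bookkeeping. The two points that need care are the order conventions --- that $\{1\}$ is closed in $\two$, so continuous maps into $\two$ are monotone and $\calC(X)$ carries the pointwise order, whence its infima agree pointwise with those of $\two$ --- and the observation that the approximating maps $\varphi_i\cdot t_X$ are again left adjoint. This last point is precisely where complete distributivity of $X$ enters, since it is what supplies the left adjoint $t_X\dashv\Sup_X$ living in $\TOP$.
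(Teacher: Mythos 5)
Your argument is correct and is essentially the paper's own: the paper's one-line proof likewise takes, for $x\not\cong y$ with (say) $y\notin\cl\{x\}$, a ``left adjoint closed subset'' $B$ with $x\in B$ and $y\notin B$, obtained exactly as you do from the approximation $\varphi\cong\bigwedge_i(\varphi_i\cdot t_X)$ established in the paragraph preceding the lemma, and your $\psi_{i_0}$ is the characteristic map of such a $B$. The additional material in your write-up (monotonicity of the maps in $\Lambda(X)$ and the full equivalence $x\le y\iff\sigma_X(y)\subseteq\sigma_X(x)$) is a harmless strengthening of the same idea.
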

\begin{proof}
If, for instance, $y\notin\cl\{x\}$, then there exists some ``left adjoint closed subset'' $B\subseteq X$ with $y\notin B$ and $x\in B$.
\end{proof}
\begin{proposition}
For any $X\in\CDTOP_{\inf}$, $\sigma_X:X\to F\Pt(X)$ is an isomorphism.
\end{proposition}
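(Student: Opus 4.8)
The plan is to show that $\sigma_X$ is a continuous, open and bijective map; such a map is a homeomorphism, and since a homeomorphism between objects of $\CDTOP_{\inf}$ is itself a right adjoint continuous map whose inverse is again of this type, it is in particular an isomorphism of $\CDTOP_{\inf}$.

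\emph{Continuity and bijectivity.} For $\varphi\in\Lambda(X)$, regarded as an element of $\Pt(X)=\Lambda(X)^\op$, one computes $\sigma_X^{-1}(\varphi^\#)=\{x\in X\mid\varphi\in\sigma_X(x)\}=\varphi^{-1}(0)$, which is open in $X$ since $\varphi$ is continuous; as the sets $\varphi^\#$ form a basis of $F\Pt(X)$ this yields continuity of $\sigma_X$. Surjectivity of $\sigma_X$ is the lemma established just above, and injectivity follows from Lemma~\ref{LemmaSigmaInjective} together with the fact that $X$, being an object of $\CDTOP_{\inf}$, is T$_0$, so that $x\neq y$ forces $x\not\cong y$.

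\emph{Openness.} This is the heart of the matter. The key input is the observation preceding Lemma~\ref{LemmaSigmaInjective}: in a completely distributive space every $\varphi\in\calC(X)$ is an infimum of elements of $\Lambda(X)$. Read through the identification of $\calC(X)$ with the coframe of closed subsets of $X$ ordered by inclusion, it says that every closed subset of $X$ is an intersection of closed sets of the form $\psi^{-1}(1)$ with $\psi\in\Lambda(X)$; passing to complements, the family $\{\varphi^{-1}(0)\mid\varphi\in\Lambda(X)\}$ is a basis for the topology of $X$. It then suffices to identify the image of a basic open: for $\varphi\in\Lambda(X)$ one has $\sigma_X(\varphi^{-1}(0))=\varphi^\#$, where ``$\subseteq$'' is immediate and ``$\supseteq$'' uses surjectivity to write a given element of $\varphi^\#$ as $\sigma_X(x)$, so that $\varphi\in\sigma_X(x)$, i.e.\ $x\in\varphi^{-1}(0)$. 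Since the image of a union is the union of the images, $\sigma_X$ carries every open of $X$ to an open of $F\Pt(X)$, hence is an open map; combined with the previous paragraph, $\sigma_X$ is a homeomorphism.

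I expect the only real obstacle to be the openness step, and within it the bookkeeping that ``infimum in $\calC(X)$'' is genuinely intersection of the corresponding closed sets — so that the basis claim is precisely what is needed — and that the image formula $\sigma_X(\varphi^{-1}(0))=\varphi^\#$ holds for \emph{every} $\varphi\in\Lambda(X)$ and not merely for those arising from a frame. Once these points are in place, the remaining verifications (in particular that $\sigma_X(x)$ really is a filter of $\Pt(X)$, which uses $i\cdot r\le 1$) are routine unwinding of the definitions of $F$, $\Pt$ and $\sigma_X$.
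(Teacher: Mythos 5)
Your proof is correct and takes essentially the same route as the paper's: the same bijectivity inputs, the same continuity computation $\sigma_X^{-1}(\varphi^\#)=\varphi^{-1}(0)$, and the same key step combining surjectivity with the observation that every element of $\calC(X)$ is an infimum of elements of $\Lambda(X)$. The paper merely phrases the last step in complementary form, showing $\sigma_X(B)=F\Pt(X)\setminus\varphi^\#$ is closed for each closed set $B$ with left adjoint characteristic map $\varphi$, rather than your $\sigma_X(\varphi^{-1}(0))=\varphi^\#$ on the basic opens.
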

\begin{proof}
We know alreay that $\sigma_X:X\to F\Pt(X)$ is bijective. To see continuity, notice that 
\[
 \sigma_X^{-1}(\varphi^\#)=\{x\in X\mid\varphi(x)=0\}
\]
for any $\varphi\in\Lambda(X)$. Let now $B\subseteq X$ be closed with left adjoint characteristic map $\varphi:X\to\two$. Then
\[
 \sigma_X(B)=\{\sigma_X(x)\mid x\in B\}=F\Pt(X)\setminus(\varphi^\#).
\]
Clearly, $\varphi\notin\sigma_X(x)$ for any $x\in B$. Let now $\frj\subseteq\Lambda(X)$ be an ideal with $\varphi\notin\frj$. One has $\frj=\sigma_X(x)$ for some $x\in X$ and, since $\varphi\notin\sigma_X(x)$, $x\in B$.
\end{proof}
\begin{lemma}
$\sigma=(\sigma_X)_X$ is a natural isomorphism $\sigma:1_{\CDTOP_{\inf}}\to F\Pt$.
\end{lemma}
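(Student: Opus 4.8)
The plan is to establish the only thing not yet verified, namely naturality of $\sigma$: each component $\sigma_X$ has already been shown to be an isomorphism, and the inverses of a natural family of isomorphisms automatically assemble into a natural transformation, so naturality is enough. Concretely, I would fix a morphism $f\colon X\to Y$ in $\CDTOP_{\inf}$, let $g\colon Y\to X$ be its left adjoint, and fix $x\in X$; the goal is then
\[
\sigma_Y(f(x))=F\Pt(f)\bigl(\sigma_X(x)\bigr).
\]
First I would unfold the right-hand side. Since $\Pt(f)=\Lambda(g)^{\op}$ acts on elements by $\varphi\mapsto\varphi\cdot g$, since $F$ sends a meet-semilattice homomorphism $h$ to $\frf\mapsto\upc\{h(a)\mid a\in\frf\}$, and since a filter of $\Pt(Y)=\Lambda(Y)^{\op}$ is precisely an ideal of $\Lambda(Y)$ (so that ``$\upc$ in $\Pt(Y)$'' reads as ``$\downc$ in $\Lambda(Y)$''), one gets
\[
F\Pt(f)\bigl(\sigma_X(x)\bigr)=\{\psi\in\Lambda(Y)\mid \exists\,\varphi\in\Lambda(X)\,.\,\varphi(x)=0\ \text{and}\ \psi\le\varphi\cdot g\}.
\]
Along the way one checks that $\sigma_X(x)$ is indeed an ideal of $\Lambda(X)$: downward closure is clear, and directedness uses that $\Lambda(X)$ is a coframe together with the fact that the inclusion $\Lambda(X)\hookrightarrow\calC(X)$, being a left adjoint (to $r$), preserves joins, so joins in $\Lambda(X)$ are computed pointwise.

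The key step is the adjunction identity: for $\psi\in\Lambda(Y)$ and $\varphi\in\Lambda(X)$ (in particular both monotone),
\[
\psi\le\varphi\cdot g\quad\Longleftrightarrow\quad\psi\cdot f\le\varphi ,
\]
i.e.\ precomposition $-\cdot f$ is left adjoint to $-\cdot g$ on monotone maps into $\two$. This follows from $g\dashv f$: the counit $g\cdot f\le 1_X$ gives ``$\Rightarrow$'' (apply $\varphi$ to $g(f(x))\le x$) and the unit $1_Y\le f\cdot g$ gives ``$\Leftarrow$'' (apply $\psi$ to $y\le f(g(y))$). Substituting, the right-hand side becomes $\{\psi\in\Lambda(Y)\mid\exists\,\varphi\in\Lambda(X)\,.\,\varphi(x)=0,\ \psi\cdot f\le\varphi\}$, whereas $\sigma_Y(f(x))=\{\psi\in\Lambda(Y)\mid(\psi\cdot f)(x)=0\}$. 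Hence the claim reduces to the purely ``one-space'' statement: for $\chi:=\psi\cdot f\in\calC(X)$ one has $\chi(x)=0$ if and only if $\chi\le\varphi$ for some $\varphi\in\Lambda(X)$ with $\varphi(x)=0$.

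The implication ``$\Leftarrow$'' is trivial. For ``$\Rightarrow$'' I would invoke the observation recorded just before Lemma~\ref{LemmaSigmaInjective}: every element of $\calC(X)$ is an infimum of left-adjoint continuous maps, and this infimum is pointwise, since for $\two$-valued continuous maps $(\bigwedge_i\chi_i)^{-1}(1)=\bigcap_i\chi_i^{-1}(1)$ is closed. Writing $\chi=\bigwedge_i\varphi_i$ with each $\varphi_i\in\Lambda(X)$, the equality $0=\chi(x)=\inf_i\varphi_i(x)$ in $\two$ forces $\varphi_{i_0}(x)=0$ for some index, and then $\varphi:=\varphi_{i_0}\ge\chi$ is the required map. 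I expect the main obstacle to be exactly this last point — guaranteeing that the left-adjoint characteristic maps are ``dense enough'' and that the relevant suprema and infima are computed pointwise — together with keeping straight the three occurrences of $(-)^{\op}$ (in $\Pt(X)$, in $\Lambda(g)^{\op}$, and in the reverse-inclusion order on filter spaces); once these are pinned down, the verification is routine, and combining componentwise invertibility with naturality yields the natural isomorphism $\sigma\colon 1_{\CDTOP_{\inf}}\to F\Pt$.
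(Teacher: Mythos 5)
Your proof is correct and follows essentially the same route as the paper's: unfold $F\Pt(f)$ on the ideal $\sigma_X(x)$, transpose along $g\dashv f$ (in closed-set language, $B\subseteq g^{-1}(A)\iff f^{-1}(B)\subseteq A$), and reduce to the identity $\sigma_Y(f(x))=\downc\{\varphi\cdot g\mid\varphi\in\sigma_X(x)\}$. The only difference is that you make explicit the nontrivial inclusion the paper leaves implicit, by invoking the observation preceding Lemma~\ref{LemmaSigmaInjective} that every closed subset of a completely distributive space is a pointwise infimum of ``left adjoint closed subsets''; this is exactly the right ingredient and your use of it is sound.
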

\begin{proof}
We have to show the naturality condition. To this end, let $f:X\to Y$ in $\CDTOP_{\inf}$ with left adjoint $g:Y\to X$. We identify $\Lambda(X)$ with the set of all ``left adjoint closed subsets'' of $X$, and $\sigma_X(x)=\{A\in\Lambda(X)\mid x\notin A\}$. Then
\[
\downc\{g^{-1}(A)\mid x\notin A\}=\{B\in\Lambda(Y)\mid x\notin f^{-1}(B)\}
=\{B\in\Lambda(Y)\mid f(x)\notin B\}.\qedhere
\]
\end{proof}
\begin{theorem}
$F:\FRM_\wedge\to\CDTOP_{\inf}$ and $\Pt:\CDTOP_{\inf}\to\FRM_{\wedge}$ define an equivalence of categories.
\end{theorem}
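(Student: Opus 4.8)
The plan is to harvest the equivalence directly from the two natural isomorphisms already at our disposal, so this last theorem is really just a matter of assembling pieces. We have the 2-functors $F:\FRM_\wedge\to\CDTOP_{\inf}$ and $\Pt:\CDTOP_{\inf}\to\FRM_\wedge$, and the two lemmas immediately preceding supply natural isomorphisms $\rho:1_{\FRM_\wedge}\to\Pt F$ and $\sigma:1_{\CDTOP_{\inf}}\to F\Pt$. The first, and essentially only, step is to observe that the existence of these two natural isomorphisms is by definition exactly what it means for $F$ to be an equivalence of categories with quasi-inverse $\Pt$; nothing further is needed for the statement as phrased.

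A second, optional step would record the 2-categorical refinement. Since the components $\rho_L$ and $\sigma_X$ are order-isomorphisms (the former by Lemma \ref{CompNatTransFrm}, the latter because $\sigma_X$ is an isomorphism of topological spaces, hence of underlying orders) and the orders on the hom-sets of $\FRM_\wedge$ and $\CDTOP_{\inf}$ are computed pointwise, I would check that $\rho$ and $\sigma$ are 2-natural, so that $F$ and $\Pt$ constitute a 2-equivalence. If one insists on an adjoint equivalence, one adjusts one of the two isomorphisms in the standard formal way so that the triangle identities hold; this uses no special feature of the categories involved.

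I do not expect a genuine obstacle here: all of the substance has already been discharged --- the identification $\Lambda(FL)^\op\cong L$, the coframe structure on $\Lambda(X)$, the bijectivity of $\sigma_X$ (its surjectivity together with Lemma \ref{LemmaSigmaInjective}), its continuity and openness, and the naturality squares for $\rho$ and $\sigma$. The one thing that calls for a little care is bookkeeping: keeping straight which preceding lemma supplies which half of the equivalence, and confirming that the two natural transformations are invertible with inverses that are again natural.
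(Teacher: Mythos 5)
Your proposal is correct and matches the paper exactly: the paper also gives no separate argument for this theorem, since the natural isomorphisms $\rho:1_{\FRM_\wedge}\to\Pt F$ and $\sigma:1_{\CDTOP_{\inf}}\to F\Pt$ established in the preceding lemmas are precisely what is needed. The optional remarks about 2-naturality and adjusting to an adjoint equivalence are fine but not required for the statement as phrased.
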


\begin{corollary}
A topological space is equivalent to the filter space of some frame if and only if it is completely distributive.
\end{corollary}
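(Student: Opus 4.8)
The plan is to deduce the corollary directly from the equivalence $F:\FRM_\wedge\to\CDTOP_{\inf}$ (with inverse $\Pt$) just established, the only real work being to bridge this $2$-categorical equivalence to the bare statement about objects of $\TOP$. For the ``only if'' direction, assume $X$ is equivalent in $\TOP$ to the filter space $FL$ of a frame $L$. We have already seen, in the discussion preceding the first appearance of $F:\FRM_\wedge\to\CDTOP_{\inf}$, that $FL$ is a completely distributive $T_0$-space, the string $F\alpha_L\dashv F\beta_L\dashv(\beta_L)_!$ inside $P(FL)\cong F\calO F(L)$ exhibiting $\yoneda_{FL}=F\alpha_L$ as a left adjoint. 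So it only remains to note that complete distributivity is invariant under equivalence in $\TOP$: an equivalence $X\simeq Y$ induces an equivalence $PX\simeq PY$ under which the Yoneda embeddings correspond, hence transports the adjunctions $t\dashv\Sup\dashv\yoneda$ from $FL$ to $X$.

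For the ``if'' direction, let $X$ be completely distributive. If $X$ is already $T_0$, then $X\in\CDTOP_{\inf}$ and the isomorphism $\sigma_X:X\to F\Pt(X)$ obtained above identifies $X$ with the filter space of the frame $\Pt(X)$. In general one first observes that a cocomplete — in particular, a completely distributive — space is equivalent to its $T_0$-reflection $q:X\to X_0$: since $PX$ is separated, $\yoneda_X$ factors as $\bar\yoneda\cdot q$ for a unique $\bar\yoneda:X_0\to PX$, and then $g:=\Sup_X\cdot\bar\yoneda:X_0\to X$ is a pseudo-inverse of $q$, because $g\cdot q\cong\Sup_X\cdot\yoneda_X\cong 1_X$ and hence, by the universal property of the reflection applied to $q$ itself, $q\cdot g\cong 1_{X_0}$. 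This $X_0$ inherits complete distributivity (being a retract of $X$, or since $PX\cong PX_0$ carries the relevant adjunctions), so $X_0\in\CDTOP_{\inf}$ and therefore $X\simeq X_0\cong F\Pt(X_0)$, the filter space of the frame $\Pt(X_0)$.

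The one genuinely delicate point is exactly this reconciliation of the paper's ``no separation axioms'' convention with the fact that the equivalence between $\FRM_\wedge$ and $\CDTOP_{\inf}$ is phrased for $T_0$-spaces; once one knows that a completely distributive space is equivalent to its (still completely distributive) $T_0$-reflection, the corollary follows immediately from that equivalence, with no further computation. Everything else is a direct citation of results already proved: that $FL$ is completely distributive, that $\sigma_X$ is an isomorphism for $X\in\CDTOP_{\inf}$, and the equivalence $F\dashv\vdash\Pt$.
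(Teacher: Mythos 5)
Your proof is correct and follows essentially the same route as the paper, which states the corollary as an immediate consequence of the equivalence $F:\FRM_\wedge\rightleftarrows\CDTOP_{\inf}:\Pt$ together with the earlier observation that each $FL$ is completely distributive. The only addition is your explicit argument that a completely distributive space is equivalent to its (still completely distributive) T$_0$-reflection, a detail the paper leaves implicit in the word ``equivalent'', and your argument for it (factoring $\yoneda_X$ through the reflection and using $\Sup_X$, plus antisymmetry of the order on maps into a T$_0$-space) is sound.
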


Throughout we have emphasised that both $F$ and $\Pt$ are 2-functors, hence the subcategories of $\FRM_\wedge$ and $\CDTOP_{\inf}$ defined by the left adjoint morphisms are equivalent as well. Therefore

\begin{theorem}
$\FRM$ is equivalent to $\CDTOP$.
\end{theorem}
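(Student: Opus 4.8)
The plan is to deduce this from the $2$-equivalence $F:\FRM_\wedge\to\CDTOP_{\inf}$, $\Pt:\CDTOP_{\inf}\to\FRM_\wedge$ together with the $2$-natural isomorphisms $\rho:1_{\FRM_\wedge}\to\Pt F$ and $\sigma:1_{\CDTOP_{\inf}}\to F\Pt$ already established, using nothing more than the elementary principle that a $2$-functor between (locally ordered) $2$-categories sends adjoint $1$-cells to adjoint $1$-cells. Concretely, I would first identify the wide subcategories of $\FRM_\wedge$ and $\CDTOP_{\inf}$ whose morphisms are the left adjoint $1$-cells with $\FRM$ and $\CDTOP$ respectively, then note that $F$ and $\Pt$ restrict to functors between these wide subcategories, and finally check that $\rho$ and $\sigma$ restrict to natural isomorphisms between the restricted functors.

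The key identification on the algebraic side is that the left adjoint $1$-cells of the $2$-category $\SLAT$, taken between objects that happen to be frames, are exactly the frame homomorphisms. Recall that an adjunction $f\dashv g$ in a $2$-category whose hom-categories are ordered sets is just a pair of monotone maps with $1\le gf$ and $fg\le 1$; so a meet-semilattice homomorphism $f:L\to M$ between frames is a left adjoint in $\SLAT$ precisely when it admits a right adjoint $g:M\to L$ that is again a meet-semilattice homomorphism. Being a left adjoint forces $f$ to preserve arbitrary suprema, and together with preservation of finite meets this says exactly that $f$ is a frame homomorphism; conversely, every frame homomorphism preserves arbitrary suprema, hence (the underlying lattices being complete) admits a right adjoint $g$, and $g$, as a right adjoint, preserves arbitrary meets -- in particular the top and binary meets -- so $g$ lies in $\SLAT$. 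Thus the wide subcategory of $\FRM_\wedge$ determined by the left adjoint morphisms is precisely $\FRM$.

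On the spatial side the corresponding statement is that the left adjoint $1$-cells of $\CDTOP_{\inf}$ are exactly the left-and-right adjoint continuous maps, i.e.\ the morphisms of $\CDTOP$. A morphism of $\CDTOP_{\inf}$ is a right adjoint continuous map $f:X\to Y$ between completely distributive $T_0$-spaces; it is a left adjoint $1$-cell of $\CDTOP_{\inf}$ exactly when it admits a right adjoint $g:Y\to X$ which is itself a morphism of $\CDTOP_{\inf}$. But any right adjoint of $f$ in $\TOP$ is, by definition of adjunction in $\TOP$, a continuous map, and it is visibly again a right adjoint continuous map, hence automatically a morphism of $\CDTOP_{\inf}$; so the condition merely says that $f$ is \emph{also} a left adjoint in $\TOP$. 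Conversely, a left-and-right adjoint continuous map between completely distributive $T_0$-spaces is in particular a right adjoint continuous map, hence a morphism of $\CDTOP_{\inf}$, and by the same argument it is a left adjoint $1$-cell there. So the wide subcategory of $\CDTOP_{\inf}$ determined by the left adjoint morphisms is precisely $\CDTOP$.

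With these two identifications in hand the conclusion is formal: since $2$-functors preserve adjunctions, $F$ restricts to a functor $\FRM\to\CDTOP$ and $\Pt$ to a functor $\CDTOP\to\FRM$; each component $\rho_L$, $\sigma_X$ is an isomorphism in the ambient $2$-category, hence a left adjoint $1$-cell (adjoint to its inverse, which is again an isomorphism and hence again a morphism of the restricted category), so $\rho$ and $\sigma$ restrict to natural transformations with invertible components, naturality being inherited because the morphisms of $\FRM$ and $\CDTOP$ are among those of $\FRM_\wedge$ and $\CDTOP_{\inf}$. This exhibits $\FRM$ as equivalent to $\CDTOP$. The main obstacle -- and really the only non-bookkeeping step -- is the two characterisations of left adjoint $1$-cells above, and within those the single delicate point that passing to the right adjoint does not leave the category; but that reduces, on the algebraic side, to the standard fact that right adjoints between complete lattices preserve arbitrary meets, and on the spatial side to the triviality that a right adjoint in $\TOP$ is by definition continuous, so I expect no serious difficulty, only care about which $2$-categorical structure is in play at each step.
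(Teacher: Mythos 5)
Your argument is correct and is essentially the paper's own: the paper deduces the theorem in one line from the fact that $F$ and $\Pt$ are 2-functors forming an equivalence, hence restrict to an equivalence between the subcategories of left adjoint morphisms, which are $\FRM$ and $\CDTOP$. You merely make explicit the two identifications (left adjoint $1$-cells of $\FRM_\wedge$ are the frame homomorphisms; left adjoint $1$-cells of $\CDTOP_{\inf}$ are the left-and-right adjoint continuous maps) that the paper leaves implicit, and these are verified correctly.
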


\begin{remark}\label{yes_schiz_obj}
The results of this section tell us that $\CDTOP$ is actually a very nice category: it is monadic over $\SET$. However, we have to take here the ``right'' forgetful functor $\CDTOP\to\SET$ (see also Remark \ref{no_schiz_obj}); namely the one which sends $X\in\CDTOP$ to the set of all right adjoint continuous maps of type $\two\to X$. Any such map sends necessarily $1$ to the top element of $X$, hence it is completely determined by the image of $0$. But note that, unlike in ordered sets, not every $x\in X$ defines a right adjoint via $0\mapsto x$. Therefore our result really extends the well-known fact that the canonical forgetful functor $\CCD\to\SET$ is monadic. I do not know yet if the corresponding functor $\CDAP\to\SET,\,X\mapsto\mathrm{LeftAdjoint}(X,[0,\infty])$ is monadic.
\end{remark}

\section{Continuous metric spaces}\label{SectionContMetSp}

Motivated by the well-known fact that the continuous lattices are precisely the injective topological spaces under the Scott topology, we call a metric space \emph{continuous} if it underlies an injective approach space. Our first goal is to show that this is indeed a property rather then an additional structure in the sense that there is at most one such approach space. More precise, we show that each injective approach space is a metric compact Hausdorff space where the compact Hausdorff topology is the Lawson topology of the underlying order of the metric. Certainly, one could argue that each separated injective approach space is a split subobject of a power of $[0,\infty]$, and use that $[0,\infty]$ is a metric compact Hausdorff space. Eventually, one obtains a concrete functor $\AP^{\mPsh}\to\AP^{\mU}$ which must be induced by a monad morphism $\mU\to\mPsh$. However, this argument uses the fact that $[0,\infty]$ is an initial cogenerator in $\AP$, but we do not know yet if the corresponding fact is true for $(\mT,\V)$-categories in general. Therefore we give here a different argument which does not rely on this property of $[0,\infty]$. To do so we start at the other end and present the monad morphism $\mU\to\mPsh$ right away. Recall that an approach space $X=(X,a)$ induces a metric $r:=Ua\cdot m_X^\circ$ on $UX$, and $r:UX\relto UX$ can be viewed as a $\mU$-relation $r:X\krelto UX$. This relation is actually a $\mU$-module $r:X\kmodto UX$ as one easily verifies:
\begin{align*}
r\kleisli a &= Ua\cdot m_X^\circ\cdot Ua\cdot m_X^\circ=r\cdot r=r,\text{ and}\\
(Ua\cdot m_X^\circ\cdot m_X)\kleisli r &=
Ua\cdot m_X^\circ\cdot m_X\cdot UUa\cdot Um_X^\circ\cdot m_X^\circ\\
&=Ua\cdot m_X^\circ\cdot Ua\cdot m_{UX}\cdot m_{UX}^\circ\cdot m_X^\circ=Ua\cdot m_X^\circ\cdot Ua\cdot m_X^\circ=r\cdot r=r.
\end{align*}
From that one obtains a contraction map $\yonedaT_X:UX\to PX$, which turns out to be the $X$-component of a natural transformation $U\to P$. To check naturality, let also $Y=(Y,b)$ be an approach space and $f:X\to Y$ be a contraction map. Furthermore, let $s:=Ub\cdot m_Y^\circ$ be the induced metric on $UY$ and not that
\[
U(f^*)\cdot m_X^\circ=Uf^\circ\cdot Ub\cdot m_X^\circ=Uf^\circ\cdot s=(Uf)^*,
\]
where $(Uf)^*$ is the module induced by the contraction map $Uf:UX\to UY$ between metric spaces. With this in mind, the left-lower path in
\[
\xymatrix{UX\ar[r]^{\yonedaT_X}\ar[d]_{Uf} & PX\ar[d]^{Pf}\\ UY\ar[r]_{\yonedaT_Y} & PY}
\]
sends $\frx$ to $s(-,Uf(\frx))=Uf^*(-,\frx)$, and the the upper-right path sends $\frx$ to
\[
\yonedaT_X(\frx)\kleisli f^*=r(-,\frx)\cdot Uf^*=Uf^*(-,\frx).
\]
Since also the triangle
\[
\xymatrix{UX\ar[rr]^{\yonedaT_X} && PX\\ & X\ar[ul]^{e_X}\ar[ur]_{\yoneda_X} &}
\]
commutes for each approach space $X$, we conclude that ``composition with $\yonedaT_X$'' induces a functor $\AP^{\mPsh}\to\AP^\mU$ and, consequently, $(\yonedaT_X)_X$ is a monad morphism. Here we use the following well-known fact.
\begin{proposition}\label{PropositionMonadMorphism}
Let $\mT=\monad$ and $\mT'=\monadb$ be monads on a category $\catfont{C}$, and let $d:T\to T'$ be a natural transformation. Then the following assertions are equivalent.
\begin{eqcond}
\item $d$ is a monad morphism from $\mT$ to $\mT'$.
\item For every $\mT'$-algebra $(X,T'X\xrightarrow{\alpha}X)$, $(X,TX\xrightarrow{d_X}T'X\xrightarrow{\alpha}X)$ is a $\mT$-algebra.
\item For every object $X$ in $\catfont{C}$, $(T'X,TT'X\xrightarrow{d_{T'X}}T'T'X\xrightarrow{m'_X}T'X)$ is a $\mT$-algebra.
\end{eqcond}
\end{proposition}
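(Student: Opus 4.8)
The plan is to establish the cycle of implications (a)~$\Rightarrow$~(b)~$\Rightarrow$~(c)~$\Rightarrow$~(a). Throughout I phrase the condition in (a) as the two equations $d\cdot e=e'$ and $d\cdot m=m'\cdot d_{T'(-)}\cdot Td$; here one should note that, by naturality of $d$ applied to its own components $d_X$, one has $d_{T'X}\cdot Td_X=T'd_X\cdot d_{TX}$, so the second equation may be read in either form.

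For (a)~$\Rightarrow$~(b), given a $\mT'$-algebra $(X,\alpha)$ I would verify that $\beta:=\alpha\cdot d_X$ satisfies the two $\mT$-algebra laws. The unit law $\beta\cdot e_X=1_X$ is immediate from $d\cdot e=e'$ and the $\mT'$-unit law for $\alpha$. For associativity I would expand $\beta\cdot T\beta=\alpha\cdot d_X\cdot T\alpha\cdot Td_X$, move $d_X$ past $T\alpha$ using naturality of $d$ at $\alpha$, apply the $\mT'$-associativity $\alpha\cdot T'\alpha=\alpha\cdot m'_X$, and then use the second monad-morphism equation to reach $\alpha\cdot d_X\cdot m_X=\beta\cdot m_X$. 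The implication (b)~$\Rightarrow$~(c) is then immediate: instantiate (b) at the free $\mT'$-algebra $(T'X,m'_X)$.

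The real content is (c)~$\Rightarrow$~(a), and this is the step I expect to be the main obstacle, since both monad-morphism equations must be reconstructed from the single hypothesis that $\gamma_X:=m'_X\cdot d_{T'X}$ is a $\mT$-algebra structure on $T'X$ for every $X$. I would first recover the unit equation: precomposing the $\mT$-algebra unit law $\gamma_X\cdot e_{T'X}=1_{T'X}$ with $e'_X\colon X\to T'X$ and then simplifying the left-hand side via naturality of $e$ at $e'_X$, naturality of $d$ at $e'_X$, and a unit law of $\mT'$ (namely $m'_X\cdot T'e'_X=1_{T'X}$), the left-hand side collapses to $d_X\cdot e_X$, giving $d_X\cdot e_X=e'_X$. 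For the multiplication equation I would invoke the universal property of the free $\mT$-algebra $(TX,m_X)$: for any $h\colon X\to T'X$ the morphism $\gamma_X\cdot Th\colon TX\to T'X$ is a $\mT$-algebra morphism $(TX,m_X)\to(T'X,\gamma_X)$, which needs only that $(T'X,\gamma_X)$ is a $\mT$-algebra, i.e.\ hypothesis (c), together with naturality of $m$ and the $\mT$-algebra associativity. Taking $h=e'_X$ and simplifying $\gamma_X\cdot Te'_X$ by naturality of $d$ and the same unit law of $\mT'$ shows that this morphism is precisely $d_X$; hence $d_X$ is a $\mT$-algebra morphism, which unwinds exactly to $d_X\cdot m_X=\gamma_X\cdot Td_X=m'_X\cdot d_{T'X}\cdot Td_X$, the second monad-morphism equation. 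This closes the cycle; apart from (c)~$\Rightarrow$~(a), every step is routine naturality-chasing.
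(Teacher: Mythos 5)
Your proof is correct, and every step checks out: the unit and associativity verifications in (a)$\Rightarrow$(b), the specialisation to the free algebra $(T'X,m'_X)$ for (b)$\Rightarrow$(c), and in (c)$\Rightarrow$(a) both the recovery of $d_X\cdot e_X=e'_X$ from the unit law of $(T'X,\gamma_X)$ and the observation that $\gamma_X\cdot Te'_X=d_X$ is a $\mT$-algebra morphism $(TX,m_X)\to(T'X,\gamma_X)$, which unwinds to the multiplication equation. The paper itself states this proposition as a well-known fact and gives no proof, so there is nothing to compare against; your cycle of implications is the standard argument for this result.
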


\begin{example}\label{ExamplePXisMetCompHaus}
Since $PX$ is cocomplete it also a metric compact Hausdorff space where the convergence $UPX\to PX$ sends $\frp\in UPX$ to $\yonedaT_{PX}(\frp)\kleisli(\yoneda_X)_*$ in $PX$. Recall from Lemma \ref{LemmaYoneda} that $(\yoneda_X)_*:X\kmodto PX$ is given by the evaluation relation $\ev:UX\relto PX,\,\ev(\frx,\psi)=\psi(\frx)$. Therefore, for any $\frx\in UX$, one has
\[
(\yonedaT_{PX}(\frp)\kleisli(\yoneda_X)_*)(\frx)
= U(\fspstr{-}{-})\cdot m_{PX}^\circ\cdot U\yoneda_X(\frx,\frp)\\
= U(\fspstr{-}{-}\cdot U\yoneda_X)\cdot m_{X}^\circ (\frx,\frp)=U\!\ev\cdot m_{X}^\circ (\frx,\frp).
\]
\end{example}

\begin{sideremark}\label{RemarksecondYoneda}
The contraction map $\yonedaT_X:UX\to PX$ can be seen as a ``second'' Yoneda embedding, in fact, as a function it is the co-restriction of the Yoneda embedding of the \emph{metric} space $UX$. Therefore the metric Yoneda lemma applies, but for this co-restriction an even stronger result holds: for $\frX\in UUX$ and $\psi\in PX$, $\fspstr{U\!\yonedaT_X(\frX)}{\psi}=\psi(m_X(\frX))$.
\end{sideremark}

Of course, all what was said so far applies \emph{mutatis mutandis} to topological spaces. Hence, for a (separated) injective space $X$ one gets a compact Hausdorff topology
\begin{equation}\label{Lawson}
\xymatrix@C=5em{UX\ar[r]^-{\yonedaT_X}\ar@/^2.3em/[rr]^{l_X} &PX\cong F_0(X)\ar[r]^-{\Sup_X} & X}
\end{equation}
which is known as the \emph{Lawson topology}. Furthermore, $l_X:UX\to X$ is characterised as being left adjoint to $e_X:X\to UX$ in $\TOP$ and sends each ultrafilter $\frx\in UX$ to its smallest convergence point which can be calculated as
\[l_X(\frx)=\bigvee_{A\in\frx}\bigwedge_{x\in A}x=\bigwedge_{A\in\frx}\bigvee_{x\in A}x.\]
From this formula one concludes that this convergence is already encoded in the underlying order, therefore the topology of $X$ can be recovered from the order structure alone. It also follows that, for injective space $X$ and $Y$, a monotone map $f:X\to Y$ (between the underlying ordered sets) is continuous provided that it preserves co-directed infima\footnote{Recall that we consider the dual of the specialisation order. We should also mention that continuity is even equivalent to preservation of these infima.}.

For an (separated) approach space $X=(X,a)$, we define $l_X$ as in \eqref{Lawson} and, with $a_0$ denoting the underlying metric of $X$, $a(\frx,x)=a_0(l_X(\frx),x)$. We show that $l_X$ is indeed the Lawson topology of the underlying topological space of $X$. It is tempting to argue here that, since $l_X\dashv e_X$ in $\AP$, one also has $l_X\dashv e_X$ in $\TOP$ and we are done. Unfortunately, we are not done since the underlying topological space of the approach space $UX$ is \emph{not} the topological space which comes from applying $U$ to the underlying topological space $X_t$ of $X$, in fact, the latter one has a coarser convergence (see Example \ref{DifferentOrder} below). At least we know that $l_X:U(X_t)\to X_t$ is continuous and, since
\[
a_0(l_X(\frx),x)=r(\frx,e_X(x))=a(\frx,x),
\]
one also has
\[
l_X(\frx)\le x\iff a_0(l_X(\frx),x)=0\iff a(\frx,x)=0\iff\frx\to x\iff\frx\le e_X(x),
\]
and the assertion follows. Here we use the fact that the underlying order of the underlying topology of $X$ coincides with the underlying order of the underlying metric of $X$. In conclusion, the approach structure of an injective approach space can be recovered form its underlying metric; and a contraction map between continuous metric spaces is a contraction map between the corresponding approach spaces if it preserves co-directed infima (i.e.\ if it is continuous with respect to the Scott-topologies of the underlying lattice). The full subcategory of $\AP$ consisting of all injective approach spaces we denote as $\ContMet$, it can be also viewed as a (non-full) subcategory of $\MET$.

\begin{example}\label{DifferentOrder}
We consider the approach space $[0,\infty]$ with $\lambda(\frx,x)=x-\xi(\frx)$ (see \ref{Sierpinski}). In the underlying topology,
\[
\frx\to x\iff 0=x-\xi(\frx)\iff \xi(\frx)\geqslant x.
\]
In particular, any interval $[0,u]$ is closed. Take now the filter base $\frg:=\{(1,1+\eps)\mid 0<\eps\}$ and let $\fry\in U[0,\infty]$ be with $\frg\subseteq\fry$. Then $\doo{1}\not\le\fry$ (since $[0,1]\notin\fry$) but $\delta(\doo{1},\fry)=0$ (since every $B\in\fry$ contains elements arbitrary close to $1$ from the right).
\end{example}

\begin{sideremark}
The metric space $[0,\infty]$ is continuous since it underlies the injective approach space $[0,\infty]$. Certainly, every continuous metric space is also a continuous lattice via its underlying order; however, it should be noted a continuous lattice (via its free metric) is in general not a continuous metric space. For instance, the Sierpi\'nski space $\two$ is not injective in $\AP$. To see this, just consider the embedding $\{0,\infty\}\hrw[0,\infty]$ and $f:\{0,\infty\}\to\two$ with $f(0)=\true$ and $f(\infty)=\false$, and observe that there is no contraction map $g:[0,\infty]\to\two$ extending $f$ since there exists $\frx\in U[0,\infty)$ with $\lambda(\frx,\infty)=0$.
\end{sideremark}

\begin{remark}
If $X$ is an injective approach space, then both its underlying metric and topological space are injective. Therefore $X$ is a metric compact Hausdorff space whose metric space is cocomplete and has a continuous underlying lattice; moreover, the compact Hausdorff topology is the Lawson topology of this lattice. We are wondering how far is this from a characterisation of a continuous metric space.
\end{remark}

We observed already that the approach space $[0,\infty]$ is actually a monoid in the monoidal category $\AP$ since addition $+$ is a contraction map $+:[0,\infty]\otimes[0,\infty]\to[0,\infty]$. Hence it induces a monad $\mM=(M,0,+)$ on $\AP$ where $M=-\otimes[0,\infty]$. For each approach space $X$, 
\[
\trans_X:X\otimes [0,\infty]\to PX, (u,x)\mapsto a(-,x)+u
\]
is a contraction map since it is the mate of the composite
\[
X^\op\otimes X\otimes [0,\infty]\xrightarrow{\,a\otimes 1\,} [0,\infty]\otimes [0,\infty]\xrightarrow{\,+\,}[0,\infty]
\]
of contraction maps. Thinking of $u\in[0,\infty]$ as a $\mU$-module $u:1\kmodto 1$, then $\trans_X(x,u)$ is the $\mU$-module $u\kleisli  x^*$. One easily confirms that the family $\trans=(\trans_X)_X$ is a monad morphism $\mM\to\mPsh$. Therefore each injective approach space admits an action
\[+:=\Sup_X\cdot\trans_X:X\otimes [0,\infty]\to X,\]
which satisfies
\[
a_0(x+u,y)=a_0(\Sup_X(u\kleisli x^*),y)=[u\kleisli x^*,y^*]=a(x,y)-u.
\]

Fixing $u\in[0,\infty]$, one obtains $t_u:X\to X,\,x\mapsto x+u$ in $\AP$. Recall that $a(\frx,x)=a_0(l_X(\frx),x)$, where $l_X\dashv e_X$ in $\AP$. Moreover, from 
\[
a_0(x,y)\geqslant a_0(x+u,y+u)=a_0(x,y+u)-u
\]
follows $a_0(x,y)+u\geqslant a_0(x,y+u)$, and hence also
\begin{align}\label{formula_a_plus_u}
a(\frx,y)+u=a_0(l_X(\frx),y)+u\geqslant a_0(l_X(\frx),y+u)=a(\frx,y+u).
\end{align}
For a numerical relation $\varphi:X\relto Y$ and $u\in[0,\infty]$, we write $\varphi\nplus u$ for the relation defined by $\varphi\nplus u(x,y):=\varphi(x,y)+u$. Note that $U(\varphi \nplus u)=U\varphi\nplus u$, and, given also $\psi:Y\relto Z$ and $v\in[0,\infty]$, $(\psi\nplus v)\cdot (\varphi\nplus u)=(\psi\cdot\varphi)\nplus(v+u)$. With this notation, the formula \eqref{formula_a_plus_u} reads as $a\nplus u\geqslant t_u^\circ\cdot a$, which allows us to conclude
\[
(Ua)\nplus u=U(a\nplus u)\geqslant Ut_u^\circ\cdot Ua,
\]
that is, $Ua(\frX,\frx)+u\geqslant Ua(\frX,Ut_u(\frx))$. Since $t_u$ is a contraction map one has $l_X\cdot Ut_u\le t_u\cdot l_X$ in the underlying order of $X$, and therefore
\[
a(Ut_u(\frx),x)=a_0(l_X\cdot Ut_u(\frx),x)\leqslant a_0(l_X(\frx)+u,x)=a(\frx,x)-u.
\]
We are now in position to prove

\begin{theorem}\label{TheoremInjApExp}
Each injective approach space is exponentiable in $\AP$.
\end{theorem}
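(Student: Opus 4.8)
The plan is to verify the criterion for exponentiability in $\AP$ (see \citep{Hof_TopTh}): the space $X=(X,a)$ is exponentiable if and only if, for every $\frX\in UUX$, every $x\in X$ and all $u,v\in[0,\infty]$ with $u+v\geqslant a(m_X(\frX),x)$, there is some $\frx\in UX$ with $Ua(\frX,\frx)\leqslant u$ and $a(\frx,x)\leqslant v$. So for an injective approach space $X$ and such data $\frX,x,u,v$, it is enough to produce one such $\frx$; the idea is to first hit the ``undivided'' bound with one particular ultrafilter, and then shift it by a translation so as to split the bound as prescribed.

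First I would fix the undivided witness. Since $X$ is injective, Section \ref{SectionContMetSp} identifies $X$ with a metric compact Hausdorff space whose compact Hausdorff part is $l_X\dashv e_X$ and whose structure satisfies $a(\frx,x)=a_0(l_X(\frx),x)$, with $a_0$ the underlying metric; equivalently $a=a_0\cdot l_X$ in $\NREL$. Put $\frz:=Ul_X(\frX)\in UX$. Because $l_X$ is the structure of an Eilenberg--Moore algebra for $\mU$ we have $l_X\cdot Ul_X=l_X\cdot m_X$, hence
\[
a(\frz,x)=a_0\bigl(l_X(Ul_X(\frX)),x\bigr)=a_0\bigl(l_X(m_X(\frX)),x\bigr)=a(m_X(\frX),x).
\]
Moreover, applying the functor $U\colon\NREL\to\NREL$ to $a=a_0\cdot l_X$ gives $Ua=U(a_0)\cdot Ul_X$ (the relation induced by the function $Ul_X$), and $U(a_0)$ is a metric on $UX$, hence reflexive; therefore $Ua(\frX,\frz)=U(a_0)(Ul_X(\frX),Ul_X(\frX))=0$. (This $\frz$ is exactly the witness implicit in the proof of Proposition \ref{PropTensExp}.)

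Next I would rebalance. Given $u,v$ as above, set $s:=a(m_X(\frX),x)-v$ (truncated subtraction, so $s\in[0,\infty]$); from $u+v\geqslant a(m_X(\frX),x)$ one gets $s\leqslant u$, and by construction $a(m_X(\frX),x)-s\leqslant v$. Now put $\frx:=Ut_s(\frz)=Ut_s(Ul_X(\frX))$. The two inequalities established just before the theorem, applied to the ultrafilter $\frz$, read $a(Ut_s(\frz),x)\leqslant a(\frz,x)-s$ and $Ua(\frX,Ut_s(\frz))\leqslant Ua(\frX,\frz)+s$; together with the two computations of the previous paragraph they yield
\[
a(\frx,x)\leqslant a(\frz,x)-s=a(m_X(\frX),x)-s\leqslant v
\qquad\text{and}\qquad
Ua(\frX,\frx)\leqslant Ua(\frX,\frz)+s=s\leqslant u ,
\]
so $\frx$ satisfies the required inequalities and $X$ is exponentiable. (If $X$ is not separated one first passes to its $T_0$-reflection, which is again injective and equivalent to $X$, exponentiability being invariant under equivalence.)

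The heart of the argument, and the step I expect to need care, is the observation that $\frz=Ul_X(\frX)$ is an \emph{exact} witness for the undivided bound, i.e.\ that $Ua(\frX,\frz)=0$ while $a(\frz,x)=a(m_X(\frX),x)$ with no slack; this is precisely what allows a single translation $t_s$ to rebalance the bound without introducing any $\varepsilon$'s. The remaining ingredients — that the two translation inequalities derived before the theorem apply verbatim to $\frz$ (they were established for arbitrary ultrafilters) and the bookkeeping of truncated subtraction in $[0,\infty]$ — are routine.
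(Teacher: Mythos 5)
Your proof is correct and takes essentially the same route as the paper's: the same witness $\fry=Ul_X(\frX)$ (for which the paper leaves the key fact $Ua(\frX,\fry)=0$ implicit, while you justify it via functoriality of $U$ and reflexivity of $Ua_0$, and the algebra law $l_X\cdot Ul_X=l_X\cdot m_X$ giving $a(\fry,x)=a(m_X(\frX),x)$), followed by the same translation maps $t_s$ together with the two inequalities established just before the theorem. The only deviations are cosmetic: you shift by the truncated difference $s$ rather than by $u$ itself, and the $\varepsilon$-free splitting criterion you verify is really the Cartesian exponentiability criterion of \citep{Hof_ExpUnit} (the reference \citep{Hof_TopTh} gives the $+$-exponentiability diagram), but since your stronger condition implies the cited one, nothing is lost.
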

\begin{proof}
Recall from \citep{Hof_ExpUnit} that an approach space $X=(X,a)$ is exponentiable if, for all $\frX\in UUX$ and $x\in X$ with $a(m_X(\frX),x)<\infty$, all $v,u\in[0,\infty)$ with $v+u=a(m_X(\frX),x)$ and all $\eps>0$ there exists an ultrafilter $\frx\in UX$ such that
\begin{align*}
Ua(\frX,\frx)\leqslant v+\eps &&\text{and}&& a(\frx,x)\leqslant u+\eps.
\end{align*}
Assume now that $X=(X,a)$ is injective in $\AP$, and let $\frX\in UUX$, $x\in X$ with $w:=a(m_X(\frX),x)<\infty$ and $u,v\in[0,\infty]$ with $u+v=w$. Put $\fry:=Ul_X(\frX)$ and $\frx:=Ut_u(\fry)$. Then
\begin{align*}
Ua(\frX,\frx) &\leqslant Ua(\frX,\fry)+u= u,\text{ and}\\
a(\frx,x) &\leqslant a(\fry,x)-u\\
&= a_0(l_X\cdot Ul_X(\fry),x)-u\\
&= a_0(l_X\cdot m_X(\fry),x)-u\\
&=w-u= v,
\end{align*}
and the assertion follows.
\end{proof}

\begin{sideremark}
In the proof above we do not need $X$ to be cocomplete, it is enough if $X$ admits suprema of $\mU$-modules of the form $\yonedaT_X(\frx)$ and $t(x,u)$. We will come back to this in Section \ref{SectionExamples}.
\end{sideremark}

With the same argument as in Remark \ref{CocomplExpMet} one can show that with $Y$ and $X$ also $Y^X$ and $Y\times X$ are injective approach space, hence
\begin{theorem}\label{ContMetCartClosed}
$\ContMet$ is Cartesian closed.
\end{theorem}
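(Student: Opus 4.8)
The plan is to realise the Cartesian closed structure on $\ContMet$ as one inherited from $\AP$ equipped with its Cartesian ($\max$-)product, following the template of Remark~\ref{CocomplExpMet}. Recall that $\ContMet$ is the \emph{full} subcategory of $\AP$ spanned by the injective approach spaces, so it suffices to show that the one-point space, binary products, and the exponentials $Y^X$ (all formed in $\AP$) are again injective, and that the exponential adjunction of $\AP$ restricts along the inclusion.

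First I would deal with finite products. The terminal approach space $1$ is injective, and injective objects are closed under products in any category: given an embedding $m\colon A\rwt B$ in $\AP$ and a contraction map $f\colon A\to X\times Y$ with $X,Y$ injective, one extends $\pi_X\cdot f$ and $\pi_Y\cdot f$ separately along $m$ using injectivity of $X$ and of $Y$, and pairs the two extensions to obtain a contraction map $B\to X\times Y$ extending $f$. Hence $\ContMet$ has finite products, computed as in $\AP$.

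Next, the exponentials. By Theorem~\ref{TheoremInjApExp} every injective approach space $X$ is exponentiable in $\AP$, so the functor $X\times-\colon\AP\to\AP$ has a right adjoint $(-)^X$. To see that $Y^X$ is injective whenever $Y$ is, let $m\colon A\rwt B$ be an embedding and $f\colon A\to Y^X$ a contraction map. Since products of embeddings are again embeddings in $\AP$, the map $1_X\times m\colon X\times A\to X\times B$ is an embedding; transposing $f$ across the adjunction gives $\bar f\colon X\times A\to Y$, which by injectivity of $Y$ extends along $1_X\times m$ to some $g\colon X\times B\to Y$; transposing $g$ back and using naturality of the transposition yields a contraction map $B\to Y^X$ extending $f$. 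Thus $Y^X\in\ContMet$, and it is the exponential in $\AP$, hence in $\ContMet$ by fullness.

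Finally, because $\ContMet$ is full in $\AP$, for $A,X,Y\in\ContMet$ the natural isomorphism $\AP(X\times A,Y)\cong\AP(A,Y^X)$ restricts to a natural isomorphism $\ContMet(X\times A,Y)\cong\ContMet(A,Y^X)$, so $X\times-\dashv(-)^X$ as functors on $\ContMet$; together with the finite products this makes $\ContMet$ Cartesian closed. The one genuinely substantial input is already behind us, namely Theorem~\ref{TheoremInjApExp}; beyond it, the only thing to verify is that $X\times-$ preserves embeddings in $\AP$, which is the standard fact that the initial (embedding) morphisms of a topological category over $\SET$ are stable under products.
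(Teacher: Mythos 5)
Your proof is correct and follows essentially the same route as the paper: the paper proves Theorem \ref{TheoremInjApExp} and then, exactly as in Remark \ref{CocomplExpMet}, transposes extension problems across the exponential adjunction (using that $X\times-$ preserves embeddings) to see that $Y^X$ and $X\times Y$ are again injective, whence the full subcategory $\ContMet$ is Cartesian closed. Your additional remarks on the terminal object and on restricting the adjunction along the full inclusion are just the routine details the paper leaves implicit.
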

\begin{remark}
I do not know if in general a cocomplete $(\mT,\V)$-category is exponentiable.
\end{remark}
\begin{example}\label{ExampleActionPX}
The $[0,\infty]$-action on $PX$ sends $(\psi,u)$ to $u\kleisli\psi^*\kleisli(\yoneda_X)_*=u\kleisli\psi=\psi\nplus u$, therefore we write in the sequel $\nplus:PX\otimes[0,\infty]\to PX$. For later use we record already that it is not only a contraction map but even an $\mU$-algebra homomorphism, that is, the diagram
\[
\xymatrix{U(PX\otimes[0,\infty])\ar[r]^-{U\nplus}\ar[d]_\tau & UPX\ar[dd]^\alpha\\
UPX\otimes U[0,\infty]\ar[d]_{\alpha\otimes\xi}\\
PX\otimes[0,\infty]\ar[r]_-{\nplus} & PX}
\]
commutes (for $\alpha:UPX\to PX$ see Example \ref{ExamplePXisMetCompHaus}). To see this, let $\frq\in U(PX\otimes[0,\infty])$ with $\frp:=U\pi_1(\frq)\in UPX$ and $\fru:=U\pi_2(\frp)\in U[0,\infty]$ and let $\frx\in UX$. Since the diagram
\[
\xymatrix{UX\times PX\times [0,\infty]\ar[r]^-{1\times\nplus}\ar[d]_{\ev\times 1} &
UX\times PX\ar[d]^\ev\\ [0,\infty]\times[0,\infty]\ar[r]_+ & [0,\infty]}
\]
commutes, one obtains
\begin{align*}
\alpha\cdot U\nplus(\frq) &= \inf_{\frX,\,m_X(\frX)=\frx}U\ev(\frX,U\nplus(\frq))\\
&=\inf_{\frX,\,m_X(\frX)=\frx}\inf_{\stackrel{\frW\in U(UX\times PX\times[0,\infty])}{U\pi(\frW)=\frX,U\pi_{23}(\frW)=\frq}}\xi\cdot U\!\ev\cdot U(1\times\nplus)(\frW)\\
&=\inf_{\frX,\,m_X(\frX)=\frx}
\inf_{\stackrel{\frW\in U(UX\times PX\times[0,\infty])}{U\pi_1(\frW)=\frX,U\pi_{23}(\frW)=\frq}}
\xi\cdot U\!+(U(\ev\times 1)(\frW))\\
&=\inf_{\frX,\,m_X(\frX)=\frx}
\inf_{\stackrel{\frW\in U(UX\times PX\times[0,\infty])}{U\pi_1(\frW)=\frX,U\pi_{23}(\frW)=\frq}}
\xi\cdot U\!\ev(\tilde{\frW})+\xi(\fru)
&&(\tilde{\frW}=U\pi_{12}(\frW)\in U(UX\times PX))\\
&=\inf_{\frX,\,m_X(\frX)=\frx}
\inf_{\stackrel{\tilde{\frW}\in U(UX\times PX)}{U\pi_1(\tilde{\frW})=\frX,U\pi_2(\tilde{\frW})=\frp}}
\xi\cdot U\!\ev(\tilde{\frW})+\xi(\fru)\\
&=\alpha(\frp)(\frx)+\xi(\fru).
\end{align*}
\end{example}

\section{Everything is relative}\label{SectionRelative}

So far we have studied spaces which admit \emph{all} suprema; however, it is often desirable to limit the discussion to certain chosen ones. This is, for instance, the case in domain theory where one typically considers directed cocomplete ordered sets, and the ``directed version'' of complete distributivity is called continuity. The main point for us is here that many results are valid for both cases, one just has to write $JX$ (the ordered set of all directed down-sets) instead of $PX$ everywhere.

This suggests to start with a specification of certain $\mU$-modules, and to study spaces which admit all suprema of $\mU$-modules belonging to this specified class. This is indeed a well-known procedure in the context of enriched category theory, we refer to \citep{Kel_EnrCat,AK_Colim,KS_Colim,KL_MonCol}. A similar investigation of relative cocompleteness for $(\mT,\V)$-categories (hence for topological and approach spaces) was done in \citep{CH_CocomplII}. There seems to be no equal treatment of relative distributivity (or continuity) in the literature, but some initial steps are done in \citep{HW_AppVCat}. We also wish to point the reader to \citep{Stu_Dynamics} where an extensive study of complete distributivity in the context of quantaloid enriched categories can be found.

Following \citep{KS_Colim}, one might want to start with a collection $\Phi[X]$ of $\mU$-modules of type $X\kmodto 1$, for each space $X$, where $\Phi[X]$ contains all representable modules $x^*:X\kmodto 1$ ($x\in X$). Then a $\Phi$-weighted diagram in a space $X$ is given by a cont(inuous/ractive) map $d:D\to X$ and a $\mU$-module $\psi:D\kmodto 1$ in $\Phi[D]$. A colimit of such a diagram is an element $x\in X$ which represents $d_*\whiteleft\psi$, that is, $x_*=d_*\whiteleft\psi$. One calls $x$ a $\psi$-weighted colimit of $d$ and writes $x\simeq \colim (d,\varphi)$. One would then call a space $X$ $\Phi$-cocomplete if $X$ admits all $\Phi$-weighted colimits. Furthermore, a cont(inuous/ractive) map $f:X\to Y$ preserves the $\psi$-weighted colimit of $d$ if $f(\colim(\psi,d))\cong\colim(\psi,f\cdot d)$. If the family $\Phi[X]$ is functorial in the sense that, for all $f:X\to Y$ in $\TOP$/$\AP$ and all $\psi\in\Phi[X]$, $\psi\kleisli f^*\in\Phi[Y]$, then it is enough to consider weighted diagrams where $d$ is the identity $1_X:X\to X$ since the diagrams $(d:D\to X, \psi:D\kmodto 1)$ and $(1_X:X\to X, \psi\kleisli d^*:X\kmodto 1)$ share the same colimit. Finally, it is often convenient to assume that the family $\Phi[X]$ is \emph{saturated}, meaning that the inclusion map $i:\Phi[X]\to PX$ preserves $\Phi$-weighted colimits, for each space $X$. As we will see below, saturated implies functorial.

One would then call a space $X$ $\Phi$-cocomplete if $X$ admits all colimits weighted by some $\psi:X\kmodto 1$ in $\Phi[X]$. However, the situation for spaces is a bit more complicated then the one for enriched categories as it can be seen already in the case $\Phi[X]=PX$ all $\mU$-modules of type $X\kmodto 1$. If $X$ is cocomplete, then $\Sup_X:PX\to X$ calculates for each weighted diagram $1_X:X\to X$, $\psi:X\kmodto 1$ in $X$ a colimit $\Sup_X(\psi)$, however, already for topological spaces the existence of all weighted colimits does not guarantee cocompleteness of $X$. In fact, \citep{HW_AppVCat} presents an example of a topological spaces $X$ which admits all suprema of $\mU$-modules of type $X\kmodto 1$ but $X$ is not cocomplete. The problem here is that the induced map $PX\to X,\psi\mapsto x$ does not need to be cont(inuous/ractive), and therefore is in general only a right adjoint to $\yoneda_X:X\to PX$ in $\ORD$. The situation changes if we allow $\mU$-modules $\psi:D\kmodto A$ in the definition of weighted colimits, where $A$ might be different from the one-point space $1$. A \emph{colimit} of such a diagram is now a cont(inuous/ractive) map $g:A\to X$ which represents $d_*\whiteleft\psi$, that is, $g_*=d_*\whiteleft\psi$. With this modification it is indeed true that $X$ is cocomplete if and only if $X$ admits all colimits. In other words, $X$ admits ``continuously'' suprema of all $\mU$-modules $\psi:X\kmodto 1$ if and only if $X$ admits colimits of all $\mU$-modules $\psi:X\kmodto A$. 

\begin{example}[Composition as a colimit]\label{comp_as_colim}
Let $\varphi:X\kmodto Y$ and $\psi:Y\kmodto Z$ be $\mU$-modules, and consider the diagram
\[
\xymatrix{Y\ar@{-^{>}}|-{\object@{o}}[d]_\psi\ar[r]^{\mate{\varphi}} & PX.\\ Z}
\]
Then $\colim(\mate{\varphi},\psi)=\mate{\psi\kleisli\varphi}$. I learnt this fact from \citep{Stu_Hausdorff}.
\end{example}

Therefore what we need is not just a choice of $\mU$-modules of type $X\kmodto 1$, but rather a class $\Mod{\Phi}$ of $\mU$-modules $\varphi:X\kmodto Y$. One possibility is to extend the given family $\Phi[X]$ to such a class by defining, for $\varphi:X\kmodto Y$ in $\Mod{\mU}$,
\[
\varphi:X\kmodto Y\text{ in }\Mod{\Phi}\;\text{ if }\;\forall y\in Y\,.\,y^*\kleisli \varphi\in\Phi[X].
\]
Note that, for any cont(inuous/ractive) map $f:Z\to Y$, the $\mU$-module $f^*$ belongs to $\Mod{\Phi}$, and $f^*\kleisli\varphi$ is in $\Mod{\Phi}$ whenever $\varphi:X\kmodto Y$ is in $\Mod{\Phi}$. In \citep{Stu_Hausdorff} it is shown (in the context of quantaloid-enriched categories, but the argument is based on Example \ref{comp_as_colim} and therefore adapts easily to our case) that the family $\Phi[X]$ is saturated if and only if $\Mod{\Phi}$ is actually a subcategory of $\Mod{\mU}$. In \citep{CH_CocomplII} we went the other way around and started with a class $\Mod{\Phi}$ of $\mU$-modules containing all $\mU$-modules of the form $f^*$, closed under certain compositions (see below), and such that
\begin{equation}\label{AxPhi}
(\forall y\in Y\,.\,y^*\kleisli \varphi\in\Mod{\Phi})\iff \varphi\in\Mod{\Phi}
\end{equation}
for all $\varphi: X\kmodto Y\in\Mod{\mU}$. Note that \eqref{AxPhi} guarantees already that $\Mod{\Phi}$ is closed under compositions of the form $f^*\kleisli\varphi$. Combining \citep{Stu_Hausdorff} with \citep{CH_CocomplII} gives
\begin{theorem}\label{TheoremSatComp}
Assume that a family $\Phi[X]$ of $\mU$-modules of type $X\kmodto 1$ ($X$ in $\TOP$ or $\AP$) is given, and define $\Mod{\Phi}$ as above. Then the following assertions are equivalent.
\begin{eqcond}
\item The family $\Phi[X]$ is saturated.
\item $\Mod{\Phi}$ is a subcategory of $\Mod{\mU}$.
\item For all $\psi:X\kmodto 1$ in $\Phi[X]$ and all cont(inuous/ractive) maps $f:X\to Y$ and $g:Y\to X$ where $g_*\in\Mod{\Phi}$,
\begin{align*}
\psi\kleisli f^*\in\Phi[Y] &&\text{and}&& \psi\circ g_*\in\Phi[Y].
\end{align*}
\end{eqcond}
\end{theorem}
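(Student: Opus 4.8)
The plan is to prove the equivalences via the cycle (ii)$\Rightarrow$(iii)$\Rightarrow$(ii) together with (i)$\Leftrightarrow$(ii), the latter being essentially the theorem of \citep{Stu_Hausdorff} recalled just above. Everything rests on three bookkeeping observations about $\Mod{\Phi}$: for any cont(inuous/ractive) map $f:X\to Y$ one has $f^*\in\Mod{\Phi}$, since $x^*\kleisli f^*=(f(x))^*$ and representables lie in $\Phi[X]$ by assumption; the structure module $a:X\kmodto X$ lies in $\Mod{\Phi}$, since $x^*\kleisli a=x^*$; and a $\mU$-module $\psi:X\kmodto 1$ lies in $\Mod{\Phi}$ if and only if $\psi\in\Phi[X]$ (the defining condition only involves the unique point of $1$). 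Granting this, (ii)$\Rightarrow$(iii) is immediate: if $\Mod{\Phi}$ is closed under Kleisli composition, then for $\psi\in\Phi[X]$ and maps $f,g$ as in (iii) both $\psi\kleisli f^*$ and $\psi\kleisli g_*$ are Kleisli composites of $\Mod{\Phi}$-modules of type ${\cdot}\kmodto 1$, hence lie in $\Phi[Y]$.

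The substantial direction is (iii)$\Rightarrow$(ii). Since identities are already in $\Mod{\Phi}$, it suffices to prove closure under composition, and by the definition of $\Mod{\Phi}$ together with associativity of $\kleisli$ this reduces to the single assertion: \emph{if $\chi:Y\kmodto 1$ lies in $\Phi[Y]$ and $\varphi:X\kmodto Y$ lies in $\Mod{\Phi}$, then $\chi\kleisli\varphi\in\Phi[X]$} (apply it with $\chi=z^*\kleisli\varphi'$ for a second module $\varphi'\in\Mod{\Phi}$ and $z$ ranging over its codomain). To prove this I would factor $\varphi$ through the presheaf space. Write $j:\Phi[X]\hrw PX$ for the inclusion; the hypothesis $\varphi\in\Mod{\Phi}$ says precisely that the mate $\mate{\varphi}:Y\to PX$, $y\mapsto y^*\kleisli\varphi$, corestricts to a cont(inuous/ractive) map $\bar{\varphi}:Y\to\Phi[X]$, and since representables lie in $\Phi[X]$ the Yoneda embedding corestricts to $\yoneda_X^\Phi:X\to\Phi[X]$. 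Using the identity $\varphi=(\mate{\varphi})^*\kleisli(\yoneda_X)_*$, the functoriality $(\yoneda_X)_*=j_*\kleisli(\yoneda_X^\Phi)_*$, and the fact that $j$ is fully faithful (so $j^*\kleisli j_*$ is the identity module on $\Phi[X]$), one obtains the factorisation $\varphi=\bar{\varphi}^*\kleisli(\yoneda_X^\Phi)_*$. Now apply (iii) twice: its first clause, with the map $\bar{\varphi}:Y\to\Phi[X]$ and the weight $\chi\in\Phi[Y]$, gives $\chi\kleisli\bar{\varphi}^*\in\Phi[\Phi[X]]$; and its second clause, with the map $\yoneda_X^\Phi:X\to\Phi[X]$ — whose induced module $(\yoneda_X^\Phi)_*$ belongs to $\Mod{\Phi}$, because $\psi^*\kleisli(\yoneda_X^\Phi)_*=\psi$ for every $\psi\in\Phi[X]$ (a Yoneda-type computation inside the subspace $\Phi[X]$, again using that $x^*\in\Phi[X]$) — gives $(\chi\kleisli\bar{\varphi}^*)\kleisli(\yoneda_X^\Phi)_*\in\Phi[X]$. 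By associativity this last module equals $\chi\kleisli\varphi$, completing the reduction and hence (ii).

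For (i)$\Leftrightarrow$(ii) I would argue as in \citep{Stu_Hausdorff} through Example \ref{comp_as_colim}: the element $\chi\kleisli\varphi\in PX$ is exactly (the mate of) the colimit in $PX$ of the diagram $\mate{\varphi}:Y\to PX$ weighted by $\chi$, and when $\varphi\in\Mod{\Phi}$ and $\chi\in\Phi[Y]$ this is a $\Phi$-weighted diagram that factors through $\Phi[X]$. Since $\Phi[X]$ is itself $\Phi$-cocomplete, the statement that the inclusion $j$ preserves $\Phi$-weighted colimits (saturatedness) is exactly the statement that this colimit, formed in $\Phi[X]$ and then transported along $j$, still lies in $\Phi[X]$; by the reduction above this is precisely closure of $\Mod{\Phi}$ under composition, and conversely that closure forces $j$ to preserve these colimits.

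I expect the main obstacle to be the careful bookkeeping in (iii)$\Rightarrow$(ii): one must check on the nose that $\mate{\varphi}$ corestricts to $\Phi[X]$ \emph{exactly} when $\varphi\in\Mod{\Phi}$, that the factorisation $\varphi=\bar{\varphi}^*\kleisli(\yoneda_X^\Phi)_*$ is valid (which needs $\varphi=(\mate{\varphi})^*\kleisli(\yoneda_X)_*$ and full faithfulness of $j$), that the Yoneda identity $\psi^*\kleisli(\yoneda_X^\Phi)_*=\psi$ survives the passage to the subspace $\Phi[X]$ rather than to $PX$, and that $\bar{\varphi}$ and $\yoneda_X^\Phi$ are genuinely cont(inuous/ractive) — the last point following because $j$ is an embedding.
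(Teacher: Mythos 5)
Your proof is correct, and its computational heart coincides with the paper's: the crucial point in both is that $(\yoneda_X^\Phi)_*$ belongs to $\Mod{\Phi}$ (because $\psi^*\kleisli(\yoneda_X^\Phi)_*=\psi$ by the Yoneda lemma), so that clause (iii) applied to $f=\bar{\varphi}$ and $g=\yoneda_X^\Phi$ puts the relevant composite back into $\Phi[X]$. The difference is in how the implications are wired. You close the cycle as (ii)$\Rightarrow$(iii)$\Rightarrow$(ii), proving (iii)$\Rightarrow$(ii) directly via the factorisation $\varphi=\bar{\varphi}^*\kleisli(\yoneda_X^\Phi)_*$, and you outsource (i)$\Leftrightarrow$(ii) to \citep{Stu_Hausdorff}. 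The paper instead runs (i)$\Rightarrow$(ii)$\Rightarrow$(iii)$\Rightarrow$(i): for (iii)$\Rightarrow$(i) it reduces, using closure under $-\kleisli f^*$, to showing that the inclusion $i:\Phi X\to PX$ preserves suprema of weights $\psi:\Phi X\kmodto 1$ in $\Mod{\Phi}$, and then computes the colimit in $PX$ as $\psi\kleisli i^*\kleisli(\yoneda_X)_*=\psi\kleisli(\yoneda_X^\Phi)_*\in\Phi[X]$ --- literally the same identity you use, just aimed at saturation rather than at composition-closure; and it disposes of (i)$\Rightarrow$(ii) in one line, from saturation together with Example \ref{comp_as_colim}, without invoking Stubbe's theorem. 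So the paper's proof is self-contained where yours leans on the cited equivalence; conversely, your route makes the implication (iii)$\Rightarrow$(ii) explicit, which the paper only obtains by passing through (i). One caveat on your sketch of (i)$\Rightarrow$(ii): the phrase ``since $\Phi[X]$ is itself $\Phi$-cocomplete'' is not available at that stage --- the colimit formula $\Psi\kleisli(\yoneda_X^\Phi)_*$ lands in $\Phi[X]$ only once composition-closure is known, which is what is being proved --- so that step should either be phrased purely in terms of the colimit computed in $PX$ lying in $\Phi[X]$ (as the paper does) or left entirely to the citation; since you do cite \citep{Stu_Hausdorff} for the whole of (i)$\Leftrightarrow$(ii), as the paper itself does in the paragraph preceding the theorem, this is a presentational wrinkle rather than a gap.
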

\begin{proof}
By definition, $\varphi:X\kmodto Y$ belongs to $\Mod{\Phi}$ if and only if $\mate{\varphi}:Y\to PX$ factors through $\Phi[X]\hrw PX$. Assume (i) and let $\varphi:X\kmodto Y$ and $\psi:Y\kmodto Z$ be in $\Mod{\Phi}$. Then $\mate{z^*\kleisli\psi\kleisli\varphi}:1\to PX$ factors through $\Phi[X]\hrw PX$, for each $z\in Z$, hence $\psi\kleisli\varphi$ belongs to $\Mod{\Phi}$. The implication (ii)$\Rw$(iii) is clear, so assume now (iii). Since $\Mod{\Phi}$ is closed under compositions of the form $\varphi\kleisli f^*$, it is enough to show that $i:\Phi[X]\to PX$ preserves suprema of $\mU$-modules of type $\Phi[X]\kmodto 1$ in $\Mod{\Phi}$. Let $\psi:\Phi[X]\kmodto 1$ be in $\Mod{\Phi}$. Then the colimit of $i$ and $\psi$ in $PX$ is given by $\psi\kleisli i^*\kleisli(\yoneda_X)_*\in\Phi[X]$.
\end{proof}

Due to the considerations above, throughout this section we assume that a subcategory $\Mod{\Phi}$ of $\Mod{\mU}$ is given which satisfies \eqref{AxPhi} and contains $f^*$ for every cont(inuous/ractive) $f:X\to Y$. Following the nomenclature of \citep{CH_CocomplII}, a cont(inuous/ractive) map $f:X\to Y$ is called \emph{$\Phi$-dense} if $f_*\in\Mod{\Phi}$, and a topological/approach space $X$ is called \emph{$\Phi$-injective} if it is injective w.r.t.\ $\Phi$-dense embeddings. Furthermore, we define 
\[
\Phi X=\{\psi\in PX\mid \psi\in\Mod{\Phi}\}
\]
as a subspace of $PX$. One verifies easily that the Yoneda embedding $\yoneda:X\to PX$ corestricts to a $\Phi$-dense mapping $\yoneda_X^\Phi:X\to\Phi X$. For each $\mU$-module $\varphi:X\kmodto Y$, $\varphi\in\Mod{\Phi}$ if and only if its mate $\mate{\varphi}:Y\to PX$ factors through the embedding $\Phi X\hrw PX$.

For a $\mU$-module $\varphi:X\kmodto Y$ in $\Mod{\Phi}$, the cont(inuous/ractive) map $-\kleisli\varphi:PX\to PY$ sends $\psi\in\Phi X$ to $\psi\kleisli\varphi\in\Phi X$ and therefore restricts to $-\kleisli\varphi:\Phi X\to\Phi Y$. In particular, $Pf:PX\to PY$ restricts to $\Phi f:\Phi X\to \Phi Y$ since $f^*\in\Mod{\Phi}$. The right adjoint $-\kleisli f_*$ of $Pf$ restricts to a right adjoint of $\Phi f$ if $f$ is $\Phi$-dense. In fact, it is shown in \citep{CH_CocomplII} that $f$ is $\Phi$-dense if and only if $\Phi f$ has a right adjoint.

A topological/approach space $X$ is \emph{$\Phi$-cocomplete} if and only if $\yoneda_X^\Phi:X\to\Phi X$ has a left adjoint $\Sup_X^\Phi:\Phi X\to X$, or, equivalently, if $X$ has all weighted colimits where the weight $\psi:D\kmodto A$ belongs to $\Mod{\Phi}$. One obtains at once that a $\Phi$-cocomplete space $X$ is $\Phi$-injective, an extension of $f:A\to X$ along the $\Phi$-dense embedding $i:A\to B$ is given by $\colim (f,i_*)$. In turn, $\Phi$-injectivity of $X$ gives a cont(inuous/ractive) map $\Sup_X^\Phi:\Phi X\to X$ as an extension of $1_X:X\to X$ along $\yoneda_X^\Phi:X\to\Phi X$ which turns out to be left adjoint to $\yoneda_X^\Phi$ in $\TOP$/$\AP$.

A cont(inuous/ractive) map $f:X\to Y$ is called \emph{$\Phi$-cocontinuous} if it preserves all $\Phi$-weighted colimits which exist in $X$. The following results can be found in \citep{CH_CocomplII}.

\begin{proposition}
Let $f:X\to Y$ a cont(inuous/ractive) maps between $\Phi$-cocomplete spaces.
\begin{enumerate}
\item $f$ is $\Phi$-cocontinuous if and only if $f\cdot \Sup^\Phi_X\cong \Sup^\Phi_Y\cdot\Phi f$.
\item $f$ is $\Phi$-cocontinuous and $\Phi$-dense if and only if $f$ is left adjoint.
\end{enumerate}
\end{proposition}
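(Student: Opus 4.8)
The plan is to reduce both assertions to the computation of $\Phi$-weighted colimits via $\Sup^\Phi$ and mates, combined with the elementary calculus of adjunctions in the ordered categories $\TOP/\AP$ and $\Mod{\mU}$. Two facts from \citep{CH_CocomplII} and the Yoneda lemma will be used throughout. First, in a $\Phi$-cocomplete space $X$ the colimit of a $\Phi$-weighted diagram $(d\colon D\to X,\ \psi\colon D\kmodto A)$ with $\psi\in\Mod{\Phi}$ is computed as $\colim(d,\psi)=\Sup^\Phi_X\cdot\mate{\psi\kleisli d^*}$, where $\mate{\psi\kleisli d^*}\colon A\to\Phi X$. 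Second, $(\yoneda^\Phi_X)_*\colon X\kmodto\Phi X$ belongs to $\Mod{\Phi}$ (because $\yoneda^\Phi_X$ is $\Phi$-dense) and, being by the Yoneda lemma (Lemma \ref{LemmaYoneda}) the evaluation $\mU$-relation $\ev(\frx,\psi)=\psi(\frx)$, has identity mate $\mate{(\yoneda^\Phi_X)_*}=1_{\Phi X}$; a short computation using the adjunction $(\yoneda^\Phi_X)_*\dashv(\yoneda^\Phi_X)^*$ in $\Mod{\mU}$ then shows $(1_X)_*\whiteleft(\yoneda^\Phi_X)_*=(\yoneda^\Phi_X)^*=(\Sup^\Phi_X)_*$, i.e.\ $\Sup^\Phi_X=\colim(1_X,(\yoneda^\Phi_X)_*)$. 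I shall also use the trivial identities $\mate{\chi\kleisli f^*}=\Phi f\cdot\mate{\chi}$ and $(f\cdot d)^*=d^*\kleisli f^*$, both immediate from $\Phi f=-\kleisli f^*$ and functoriality of $(-)^*$.

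For (1), the direction ``$f\cdot\Sup^\Phi_X\cong\Sup^\Phi_Y\cdot\Phi f$ implies $f$ is $\Phi$-cocontinuous'' is obtained by applying $f$ to $\colim(d,\psi)=\Sup^\Phi_X\cdot\mate{\psi\kleisli d^*}$ and rewriting
\[
f\cdot\colim(d,\psi)\cong\Sup^\Phi_Y\cdot\Phi f\cdot\mate{\psi\kleisli d^*}=\Sup^\Phi_Y\cdot\mate{\psi\kleisli d^*\kleisli f^*}=\Sup^\Phi_Y\cdot\mate{\psi\kleisli(f\cdot d)^*}=\colim(f\cdot d,\psi).
\]
Conversely, a $\Phi$-cocontinuous $f$ preserves in particular the colimit $\Sup^\Phi_X=\colim(1_X,(\yoneda^\Phi_X)_*)$, so
\[
f\cdot\Sup^\Phi_X\cong\colim(f,(\yoneda^\Phi_X)_*)=\Sup^\Phi_Y\cdot\mate{(\yoneda^\Phi_X)_*\kleisli f^*}=\Sup^\Phi_Y\cdot\Phi f\cdot\mate{(\yoneda^\Phi_X)_*}=\Sup^\Phi_Y\cdot\Phi f.
\]

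For (2), if $f\dashv g$ in $\TOP/\AP$ then $f_*=g^*\in\Mod{\Phi}$, so $f$ is $\Phi$-dense; hence $\Phi f$ has a right adjoint, namely the restriction of $-\kleisli f_*$ to a map $\Phi Y\to\Phi X$, and evaluating on representables ($y^*\kleisli g^*=(g\cdot y)^*=g(y)^*$) one gets that this right adjoint composed with $\yoneda^\Phi_Y$ equals $\yoneda^\Phi_X\cdot g$; passing to left adjoints gives $f\cdot\Sup^\Phi_X\cong\Sup^\Phi_Y\cdot\Phi f$, so $f$ is $\Phi$-cocontinuous by (1). For the converse, assume $f$ is $\Phi$-cocontinuous and $\Phi$-dense; by $\Phi$-density $\Phi f$ has a right adjoint $r\colon\Phi Y\to\Phi X$, and I would put $g:=\Sup^\Phi_X\cdot r\cdot\yoneda^\Phi_Y\colon Y\to X$. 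Using the naturality $\yoneda^\Phi_Y\cdot f=\Phi f\cdot\yoneda^\Phi_X$ together with the unit $1_{\Phi X}\le r\cdot\Phi f$ one obtains $g\cdot f\ge\Sup^\Phi_X\cdot\yoneda^\Phi_X\cong1_X$; using $\Phi$-cocontinuity $f\cdot\Sup^\Phi_X\cong\Sup^\Phi_Y\cdot\Phi f$ together with the counit $\Phi f\cdot r\le1_{\Phi Y}$ one obtains $f\cdot g\cong\Sup^\Phi_Y\cdot\Phi f\cdot r\cdot\yoneda^\Phi_Y\le\Sup^\Phi_Y\cdot\yoneda^\Phi_Y\cong1_Y$. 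Hence $f\dashv g$ and $f$ is left adjoint.

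The main obstacle is the bookkeeping of ``up to equivalence'': $\Phi$-cocontinuity only delivers $f\cdot\Sup^\Phi_X\cong\Sup^\Phi_Y\cdot\Phi f$ and the various (co)units are mere inequalities, so one must check that the chains of $\cong$ and $\le$ compose correctly and that every map occurring is monotone (so that the inequalities may be pre- and post-composed). When the relevant presheaf spaces are separated this collapses to honest equalities, which is the situation to keep in mind. A secondary point requiring care is the precise statement of the colimit formula $\colim(d,\psi)=\Sup^\Phi_X\cdot\mate{\psi\kleisli d^*}$ and, in particular, the identification $\mate{(\yoneda^\Phi_X)_*}=1_{\Phi X}$, i.e.\ that $(\yoneda^\Phi_X)_*$ is exactly the evaluation $\mU$-relation — this is the content of the Yoneda lemma.
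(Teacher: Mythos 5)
The paper offers no proof of this proposition at all: it is quoted from \citep{CH_CocomplII} (``The following results can be found in\ldots''), so there is no in-text argument to compare against. Your proof is correct and is essentially the expected one, carried out entirely with the toolkit the paper itself assembles. The two preliminary facts you rely on do hold in this setting: the reduction of a general diagram $(d,\psi)$ to $(1_X,\psi\kleisli d^*)$ is legitimate because $\Mod{\Phi}$ is a subcategory containing all $f^*$ (so the weights stay in $\Mod{\Phi}$, and $a\whiteleft d^*=d_*$ from $d_*\dashv d^*$ shows the two diagrams share their colimit), and the formula $\colim(d,\psi)\cong\Sup_X^\Phi\cdot\mate{\psi\kleisli d^*}$ then follows from $(\Sup_X^\Phi)_*=(\yoneda_X^\Phi)^*$ together with Theorem \ref{TheoremGenYoneda}; likewise $(\yoneda_X^\Phi)_*\in\Mod{\Phi}$ (the paper records that $\yoneda_X^\Phi$ is $\Phi$-dense) with corestricted mate the identity, giving $\Sup_X^\Phi=\colim(1_X,(\yoneda_X^\Phi)_*)$. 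In (2) you correctly use the statement, made in the paper just before the proposition, that $\Phi$-density of $f$ yields the right adjoint $-\kleisli f_*:\Phi Y\to\Phi X$ of $\Phi f$; the verification $f\dashv g$ with $g=\Sup_X^\Phi\cdot(-\kleisli f_*)\cdot\yoneda_Y^\Phi$ via the unit/counit inequalities and $\Sup^\Phi\cdot\yoneda^\Phi\cong 1$ is sound, and your handling of the up-to-equivalence issues (composing $\cong$ with $\le$, uniqueness of adjoints only up to $\cong$) is the right level of care. The only cosmetic remark is that in the converse of (1) one can equally argue pointwise on weights $\psi\in\Phi X$, since the order on hom-sets is pointwise; your route through the single colimit $\colim(1_X,(\yoneda_X^\Phi)_*)$ is a clean alternative.
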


\begin{corollary}\label{PhiXcocompl}
For each space $X$, $\Phi X$ is $\Phi$-cocomplete where $\Sup_{\Phi X}^\Phi=-\kleisli(\yoneda_X^\Phi)_*$. Furthermore, the inclusion map $\Phi X\hrw PX$ is $\Phi$-cocontinuous.
\end{corollary}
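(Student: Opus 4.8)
The plan is to deduce both assertions from three facts already in place: that for a space $\Phi$-injectivity coincides with $\Phi$-cocompleteness (with $\Sup^{\Phi}$ the left adjoint of $\yoneda^{\Phi}$), that $\Mod{\Phi}$ is a subcategory of $\Mod{\mU}$, and that ${-}\kleisli\varphi\colon PX\to PY$ restricts to $\Phi X\to\Phi Y$ as soon as $\varphi\in\Mod{\Phi}$. Throughout write $j\colon\Phi X\hookrightarrow PX$ for the (fully faithful) inclusion. The heart of the matter is to show that $\Phi X$ is $\Phi$-injective; the explicit formula for $\Sup^{\Phi}_{\Phi X}$ and the $\Phi$-cocontinuity of $j$ then come out by short module manipulations.

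To prove $\Phi X$ is $\Phi$-injective, let $i\colon A\to B$ be a $\Phi$-dense embedding and $f\colon A\to\Phi X$ cont(inuous/ractive). As $PX$ is cocomplete, the weighted diagram $(j\cdot f\colon A\to PX,\ i_*\colon A\kmodto B)$ has a colimit $g=\colim(j\cdot f,i_*)\colon B\to PX$ with $g\cdot i\cong j\cdot f$ (the latter because $i^*\kleisli i_*=a_A$). By Example~\ref{comp_as_colim}, applied with $\umate{j\cdot f}\colon X\kmodto A$ and weight $i_*$, the mate of $g$ is $\umate{g}=i_*\kleisli\umate{j\cdot f}$; here $\umate{j\cdot f}\in\Mod{\Phi}$ since its mate $j\cdot f$ factors through $\Phi X$, and $i_*\in\Mod{\Phi}$ since $i$ is $\Phi$-dense. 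As $\Mod{\Phi}$ is closed under Kleisli composition, $\umate{g}\in\Mod{\Phi}$, so $g$ factors through $\Phi X$ and provides the required extension of $f$. Hence $\Phi X$ is $\Phi$-injective, and therefore $\Phi$-cocomplete; in particular $\yoneda^{\Phi}_{\Phi X}$ has a left adjoint $\Sup^{\Phi}_{\Phi X}$, obtained as the extension of $1_{\Phi X}$ along $\yoneda^{\Phi}_{\Phi X}$.

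Next I would identify $\Sup^{\Phi}_{\Phi X}$ with ${-}\kleisli(\yoneda^{\Phi}_X)_*$. Since $\yoneda^{\Phi}_X$ is $\Phi$-dense, $(\yoneda^{\Phi}_X)_*\in\Mod{\Phi}$, so ${-}\kleisli(\yoneda^{\Phi}_X)_*$ restricts to a cont(inuous/ractive) map $\Phi(\Phi X)\to\Phi X$. From $j\cdot\yoneda^{\Phi}_X=\yoneda_X$ and functoriality of $(-)_*$ one gets $(\yoneda_X)_*=j_*\kleisli(\yoneda^{\Phi}_X)_*$, and fully-faithfulness of $j$ (i.e.\ $j^*\kleisli j_*=a_{\Phi X}$) then yields $j^*\kleisli(\yoneda_X)_*=(\yoneda^{\Phi}_X)_*$; consequently ${-}\kleisli(\yoneda^{\Phi}_X)_*=\yonmult_X\cdot Pj$ on $P(\Phi X)$. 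Together with the Yoneda lemma and the monad law $\yonmult_X\cdot\yoneda_{PX}=1_{PX}$ this shows ${-}\kleisli(\yoneda^{\Phi}_X)_*$ is a cont(inuous/ractive) retraction of $\yoneda^{\Phi}_{\Phi X}$; and a cont(inuous/ractive) retraction of the Yoneda-type embedding $\yoneda^{\Phi}_{\Phi X}$ is its left adjoint (the Kock--Z\"oberlein argument of Remark~\ref{KZ}, applied to the relative monad, which is of Kock--Z\"oberlein type just as $\mPsh$ is), so $\Sup^{\Phi}_{\Phi X}\cong{-}\kleisli(\yoneda^{\Phi}_X)_*$. Finally, $\Phi X$ and $PX$ are both $\Phi$-cocomplete, so by part~(1) of the Proposition above $j$ is $\Phi$-cocontinuous iff $j\cdot\Sup^{\Phi}_{\Phi X}\cong\Sup^{\Phi}_{PX}\cdot\Phi j$; and for $\Psi\in\Phi(\Phi X)$ one has, writing $\Sup^{\Phi}_{PX}$ as the restriction of $\yonmult_X={-}\kleisli(\yoneda_X)_*$, that $\Sup^{\Phi}_{PX}(\Phi j(\Psi))=(\Psi\kleisli j^*)\kleisli(\yoneda_X)_*=\Psi\kleisli(\yoneda^{\Phi}_X)_*=\Sup^{\Phi}_{\Phi X}(\Psi)$, which is exactly that identity.

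As always in this setting, the genuine difficulty is cont(inuity/ractivity), not the order-theoretic bookkeeping: every adjunction identity used above already holds in $\ORD$ or $\MET$. Well-definedness and continuity of ${-}\kleisli(\yoneda^{\Phi}_X)_*$ are handed to us by the restriction lemma, and the step that really needs care is the passage ``cont(inuous/ractive) retraction of $\yoneda^{\Phi}_{\Phi X}$ $\Rightarrow$ left adjoint of $\yoneda^{\Phi}_{\Phi X}$'', i.e.\ the Kock--Z\"oberlein property of the relative monad; making that precise (rather than invoking it) --- or, alternatively, verifying the adjunction $({-}\kleisli(\yoneda^{\Phi}_X)_*)\dashv\yoneda^{\Phi}_{\Phi X}$ directly via the module criterion $({-}\kleisli(\yoneda^{\Phi}_X)_*)_*=(\yoneda^{\Phi}_{\Phi X})^*$ --- is where the actual work lies.
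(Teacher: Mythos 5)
Your overall architecture is workable and the two outer steps are essentially correct: the injectivity argument (Example \ref{comp_as_colim} applied to $j\cdot f$, closure of $\Mod{\Phi}$ under Kleisli composition, the factorisation criterion for $\Mod{\Phi}$, and $i^*\kleisli i_*=a_A$) does show that $\Phi X$ is $\Phi$-injective, hence $\Phi$-cocomplete by the facts already recorded in the paper; and, granted the identification of $\Sup^\Phi_{\Phi X}$, your computation $\Sup^\Phi_{PX}\cdot\Phi j(\Psi)=\Psi\kleisli j^*\kleisli(\yoneda_X)_*=\Psi\kleisli(\yoneda^\Phi_X)_*$ correctly yields $\Phi$-cocontinuity of the inclusion via part (1) of the preceding Proposition. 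The genuine gap is the middle step. Having verified only that $-\kleisli(\yoneda^\Phi_X)_*$ is a cont(inuous/ractive) retraction of $\yoneda^\Phi_{\Phi X}$, you appeal to the Kock--Z\"oberlein property of ``the relative monad''; but at this stage the monad $\mPhi$ is not yet available --- its multiplication $\yonmult^\Phi_X$ is precisely the map $-\kleisli(\yoneda^\Phi_X)_*$ whose adjointness the corollary is meant to establish, so invoking the KZ property of $\mPhi$ here is circular, and the KZ property of $\mPsh$ does not transfer without an argument. Since you explicitly defer this (``where the actual work lies''), the adjunction $-\kleisli(\yoneda^\Phi_X)_*\dashv\yoneda^\Phi_{\Phi X}$ is not actually proved in your text.

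The missing piece is small and can be supplied in two ways. Directly: an adjunction in $\TOP$/$\AP$ is an adjunction in the ordered category, i.e.\ continuity together with the pointwise counit and unit inequalities; the counit is your retraction identity, and for the unit take $\Psi\in\Phi\Phi X$, apply the unit of the absolute adjunction $-\kleisli(\yoneda_X)_*\dashv\yoneda_{PX}$ to $Pj(\Psi)=\Psi\kleisli j^*$ to get $\Psi\kleisli j^*\le\yoneda_{PX}(\Psi\kleisli(\yoneda^\Phi_X)_*)$, and compose with $j_*$ using $j^*\kleisli j_*=a_{\Phi X}$ to obtain $\Psi\le\yoneda^\Phi_{\Phi X}\bigl(\Psi\kleisli(\yoneda^\Phi_X)_*\bigr)$. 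Alternatively --- and this is closer to the computation the paper itself performs inside the proof of Theorem \ref{TheoremSatComp} --- you can bypass both the injectivity detour and the separate identification: by the Yoneda lemma $(\yoneda^\Phi_X)_*$ is the evaluation module whose mate is the inclusion $j:\Phi X\hrw PX$, so for every weight $\Psi:\Phi X\kmodto A$ in $\Mod{\Phi}$ Example \ref{comp_as_colim} exhibits the colimit of $j$ weighted by $\Psi$ in $PX$ as $\mate{\Psi\kleisli(\yoneda^\Phi_X)_*}$; this factors through $\Phi X$ because $(\yoneda^\Phi_X)_*\in\Mod{\Phi}$ and $\Mod{\Phi}$ is closed under composition, and full fidelity of $j$ makes it a colimit in $\Phi X$ which $j$ preserves. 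That single computation gives $\Phi$-cocompleteness, the formula $\Sup^\Phi_{\Phi X}=-\kleisli(\yoneda^\Phi_X)_*$, and the $\Phi$-cocontinuity of $\Phi X\hrw PX$ all at once.
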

As in the absolute case, the subcategory $\Cocts{\Phi}_\sep$ of $\TOP$/$\AP$ consisting of $\Phi$-cocomplete T$_0$-spaces and $\Phi$-cocontinuous morphisms is reflective with the Yoneda embedding as universal arrow. Furthermore, the inclusion functor $\Cocts{\Phi}_\sep\to\TOP$/$\AP$ is monadic. The induced monad $\mPhi=\Phimonad$ is of Kock-Z\"oberlein type and has $\Phi$ as functor, the Yoneda embeddings $\yoneda_X^\Phi:X\to\Phi X$ as units and $\yonmult_X^\Phi:=-\kleisli(\yoneda_X^\Phi)_*:\Phi\Phi X\to \Phi X$ as multiplications.

\begin{theorem}\label{TheoremPhiKleisli}
The category $\Mod{\Phi}$ is dually equivalent to the Kleisli category $\TOP_{\mPhi}$/$\AP_{\mPhi}$ of $\mPhi=\Phimonad$ on $\TOP$/$\AP$.
\end{theorem}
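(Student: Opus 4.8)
The plan is to build an identity-on-objects functor $K\colon\Mod{\Phi}\to(\TOP_{\mPhi})^{\op}$ (resp.\ $K\colon\Mod{\Phi}\to(\AP_{\mPhi})^{\op}$), to check that it is fully faithful, and to conclude — being also bijective on objects — that it is an isomorphism of categories, in particular a dual equivalence. On objects $K$ is the identity; a $\mU$-module $\varphi\colon X\kmodto Y$ in $\Mod{\Phi}$ is sent to its mate $\mate{\varphi}\colon Y\to\Phi X$, viewed as a morphism $Y\to X$ in the Kleisli category $\TOP_{\mPhi}$, that is, as a morphism $X\to Y$ in $(\TOP_{\mPhi})^{\op}$. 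That $K$ is automatically bijective on hom-sets is immediate: by Corollary~\ref{Xopexp} the mate of a $\mU$-module $X\kmodto Y$ is a cont(inuous/ractive) map $Y\to PX$, and (as recalled just before the statement) $\varphi\in\Mod{\Phi}$ precisely when $\mate{\varphi}$ factors through the embedding $\Phi X\hrw PX$; since morphisms $X\to Y$ in $(\TOP_{\mPhi})^{\op}$ are by definition cont(inuous/ractive) maps $Y\to\Phi X$, the mate construction is exactly the required bijection on each hom-set. It is also worth recording that $\Phi X$, being a subspace of the separated space $PX$, is separated, so the universal property of the reflection $\yoneda_X^\Phi$ into $\Phi$-cocomplete T$_0$-spaces is at our disposal. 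The whole content is therefore that $K$ preserves identities and composition.

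Identities are easy: the identity of $X$ in $\Mod{\Phi}$ is the structure $a\colon X\kmodto X$, which equals $1_X^*$ and hence lies in $\Mod{\Phi}$, and its mate corestricts to $\yoneda_X^\Phi\colon X\to\Phi X$ — precisely the identity morphism at $X$ in $\TOP_{\mPhi}$. For composition, given $\varphi\colon X\kmodto Y$ and $\psi\colon Y\kmodto Z$ in $\Mod{\Phi}$, one must show that $\mate{\psi\kleisli\varphi}$ coincides with the Kleisli composite $\yonmult_X^\Phi\cdot\Phi(\mate{\varphi})\cdot\mate{\psi}\colon Z\to\Phi X$. This splits into two observations. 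First, evaluating at $z\in Z$ and using associativity of Kleisli composition of $\mU$-modules together with $\mate{\psi}(z)=z^*\kleisli\psi$, one obtains at once
\[
\mate{\psi\kleisli\varphi}=(-\kleisli\varphi)\cdot\mate{\psi},
\]
where $-\kleisli\varphi\colon\Phi Y\to\Phi X$ is the restriction of the map induced by $\varphi$. Second, one identifies $-\kleisli\varphi$ with the Eilenberg--Moore (Kleisli) extension $\yonmult_X^\Phi\cdot\Phi(\mate{\varphi})$ of $\mate{\varphi}$: a direct pointwise computation gives $(-\kleisli\varphi)\cdot\yoneda_Y^\Phi=\mate{\varphi}$, so by uniqueness of the extension it suffices to know that $-\kleisli\varphi\colon\Phi Y\to\Phi X$ is $\Phi$-cocontinuous — equivalently, by the Proposition of the previous section together with $\Sup_{\Phi X}^\Phi=\yonmult_X^\Phi$, that it is an $\mPhi$-homomorphism between the free algebras $\Phi Y$ and $\Phi X$. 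This in turn holds because $-\kleisli\varphi\colon\Phi Y\to\Phi X$ is the restriction of the cocontinuous (indeed left adjoint) map $-\kleisli\varphi\colon PY\to PX$, while the inclusions $\Phi Y\hrw PY$ and $\Phi X\hrw PX$ are $\Phi$-cocontinuous by Corollary~\ref{PhiXcocompl}, so $\Phi$-weighted colimits in $\Phi Y$ are sent to $\Phi$-weighted colimits in $\Phi X$. Combining the two observations yields $K(\psi\kleisli\varphi)=K(\psi)\kleisli K(\varphi)$ in $(\TOP_{\mPhi})^{\op}$.

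Once $K$ is known to be a functor it is fully faithful and bijective on objects, hence an isomorphism of categories; in particular $\Mod{\Phi}$ and $\TOP_{\mPhi}$ (resp.\ $\AP_{\mPhi}$) are dually equivalent, the two cases being handled uniformly. The main obstacle is the second observation above — the identification $-\kleisli\varphi=\yonmult_X^\Phi\cdot\Phi(\mate{\varphi})$, i.e.\ that the presheaf-extension map $-\kleisli\varphi$ is a homomorphism of $\mPhi$-algebras between the free algebras. This is exactly the point that requires the Kock--Z\"oberlein structure of $\mPhi$ and the description of how $\Phi$-colimits are computed inside $PX$ from the previous section; it is also where the extra care needed in the approach (and general $(\mT,\V)$-) setting — ultrafilters living inside $\mU$-modules, and the generalised Yoneda Lemma~\ref{LemmaYoneda} (Theorem~\ref{TheoremGenYoneda}) — is genuinely used, the purely order-theoretic analogue being routine.
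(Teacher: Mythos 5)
Your proof is correct, and its outer architecture (identity-on-objects functor via mates, identity module $\mapsto$ Yoneda embedding, then check composition) coincides with the paper's; the difference lies in how the composition identity $\mate{\psi\kleisli\varphi}=\yonmult_X^\Phi\cdot\Phi\mate{\varphi}\cdot\mate{\psi}$ is justified. The paper gets it in one line from Example \ref{comp_as_colim} (composition of modules is a $\Phi$-weighted colimit, $\mate{\psi\kleisli\varphi}=\colim(\mate{\varphi},\psi)$) together with the standard formula computing such a colimit in the $\Phi$-cocomplete space $\Phi X$ as $\Sup_{\Phi X}^\Phi\cdot\Phi\mate{\varphi}\cdot\mate{\psi}$, plus $\Sup_{\Phi X}^\Phi=\yonmult_X^\Phi$. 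You instead factor $\mate{\psi\kleisli\varphi}=(-\kleisli\varphi)\cdot\mate{\psi}$ and then identify $-\kleisli\varphi\colon\Phi Y\to\Phi X$ with the Kleisli extension by the universal property of the reflection into $\Cocts{\Phi}_\sep$ (uniqueness of $\Phi$-cocontinuous extensions along $\yoneda_Y^\Phi$, legitimate since $\Phi X$ is separated), with $\Phi$-cocontinuity of $-\kleisli\varphi$ obtained from left-adjointness of $-\kleisli\varphi\colon PY\to PX$ and Corollary \ref{PhiXcocompl}. This buys independence from Example \ref{comp_as_colim} at the price of the cocontinuity argument; the paper's route is shorter because the colimit formula already packages exactly the extension you reconstruct. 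One small step you leave implicit: from ``the inclusion $\Phi X\hrw PX$ is $\Phi$-cocontinuous and the ambient map preserves colimits'' you still need that this fully faithful inclusion \emph{reflects} $\Phi$-weighted colimits in order to conclude that the restricted map $\Phi Y\to\Phi X$ is $\Phi$-cocontinuous; this is routine (use $i^*\kleisli i_*=1$ to transfer the defining equation $g_*=d_*\whiteleft\omega$ back along the embedding), but it should be said. With that sentence added, your argument is complete.
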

\begin{proof}
We have seen already that $\mU$-modules $X\kmodto Y$ in $\Mod{\Phi}$ are in bijection with cont(inuous/ractive) maps of type $Y\to\Phi X$, where the identity distributor $a:X\kmodto X$ corresponds to the Yoneda embedding $\yoneda_X^\Phi:X\to\Phi X$. Let now $\varphi:X\kmodto Y$ and $\psi:Y\kmodto Z$ be $\mU$-modules in $\Phi$. By Example \ref{comp_as_colim},
\[
 \mate{\psi\kleisli\varphi}=\colim(\mate{\varphi},\psi)
=\Sup_{\Phi X}^\Phi\cdot\Phi\mate{\varphi}\cdot\mate{\psi}
=\yonmult_X^\Phi\cdot\Phi\mate{\varphi}\cdot\mate{\psi}.\qedhere
\]
\end{proof}

Both $\Mod{\Phi}$ and $\TOP_{\mPhi}$/$\AP_{\mPhi}$ are actually ordered categories, and the equivalence above is indeed a 2-equivalence.

The notion of complete distributivity generalises in an obvious to this relative case, and was studied in this context under the name ``continuity'' in \citep{HW_AppVCat}. One naturally expects that the proofs of Section \ref{SectionDuality} can be adapted to this case leading to a duality theorem for ``$\Phi$-algebraic spaces''. It is the aim of this section to show that this is indeed the case.

More general, R.~Rosebrugh and R.J. Wood showed in \citep{RW_CCD4} that the category $\CCD_\textrm{sup}$ of constructive complete distributive lattices and suprema preserving maps is equivalent to the idempotent splitting completion $\kar(\REL)$ of the category $\REL$ of sets and relations, as well as to the idempotent splitting completion $\kar(\MOD)$ of the category $\MOD$ of ordered sets and modules. Later on, in \citep{RW_Split} they observed that this theorem is ``not really about lattices'' but rather a special case of a much more general result about ``a mere monad $\monadfont{D}$ on a mere category $\catfont{C}$''.
\begin{theorem}[\cite{RW_Split}]\label{TheoremSplit}
Let $\monadfont{D}$ be a monad on a category $\catfont{C}$ where idempotents split. Then
\[
 \kar(\catfont{C}_\monadfont{D})\cong\Spl(\catfont{C}^\monadfont{D}).
\]
Here $\catfont{C}_\monadfont{D}$ denotes the Kleisli and $\catfont{C}^\monadfont{D}$ the Eilenberg--Moore category of $\monadfont{D}$.
\end{theorem}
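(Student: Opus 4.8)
The plan is to realise both sides of the claimed equivalence as full subcategories of the Eilenberg--Moore category $\catfont{C}^\monadfont{D}$ and to recognise one of them as the idempotent-splitting completion of the other. First I would recall the \emph{comparison functor} $K\colon\catfont{C}_\monadfont{D}\to\catfont{C}^\monadfont{D}$ sending an object $X$ to the free algebra $(\monadfont{D}X,m_X)$ (here $\monadfont{D}$ denotes also the underlying endofunctor, $e$ and $m$ its unit and multiplication); it is fully faithful, with replete image the full subcategory of free algebras. Since every free algebra is a retract of itself, $K$ corestricts to a fully faithful functor $J\colon\catfont{C}_\monadfont{D}\to\Spl(\catfont{C}^\monadfont{D})$, and, by the very definition of $\Spl(\catfont{C}^\monadfont{D})$, every object of $\Spl(\catfont{C}^\monadfont{D})$ is a retract of an object in the image of $J$.

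The crucial step is to show that idempotents split in $\catfont{C}^\monadfont{D}$ as soon as they split in $\catfont{C}$. Given an idempotent Eilenberg--Moore endomorphism $e\colon(X,x)\to(X,x)$, I would split the underlying idempotent $e$ in $\catfont{C}$ as $e=i\cdot p$ with $p\cdot i=1_Y$, and equip $Y$ with the structure $y:=p\cdot x\cdot\monadfont{D}i\colon\monadfont{D}Y\to Y$. A short diagram chase then shows that $(Y,y)$ is an Eilenberg--Moore algebra and that $i\colon(Y,y)\to(X,x)$ and $p\colon(X,x)\to(Y,y)$ are algebra morphisms, so $e$ splits in $\catfont{C}^\monadfont{D}$: the unit law for $(Y,y)$ uses naturality of $e\colon\mathrm{Id}\to\monadfont{D}$ together with the unit law of $(X,x)$, and associativity uses that $e$ is an algebra morphism ($x\cdot\monadfont{D}e=e\cdot x$) together with the associativity of $(X,x)$. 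In particular a retract in $\catfont{C}^\monadfont{D}$ of a split algebra is again split, so $\Spl(\catfont{C}^\monadfont{D})$ is closed under retracts; combined with the previous paragraph, this shows $\Spl(\catfont{C}^\monadfont{D})$ has split idempotents and every one of its objects is a retract of an object of $J(\catfont{C}_\monadfont{D})$.

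It then remains to invoke a purely formal principle: if $J\colon\catfont{A}\to\catfont{B}$ is fully faithful, $\catfont{B}$ has split idempotents, and every object of $\catfont{B}$ is a retract of some $JA$, then the functor $\kar(\catfont{A})\to\catfont{B}$ canonically induced by $J$ is an equivalence. It is fully faithful because $\kar(-)$ carries fully faithful functors to fully faithful ones (an easy computation on the hom-sets of Karoubi envelopes), while $\catfont{B}$ being idempotent-complete yields an equivalence $\kar(\catfont{B})\simeq\catfont{B}$; and it is essentially surjective because a retract of $JA$, exhibited by $i,p$ with $p\cdot i=1$, is exactly the splitting in $\catfont{B}$ of the idempotent $J(\bar e)$, where $\bar e$ is the unique idempotent on $A$ with $J(\bar e)=i\cdot p$. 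Taking $\catfont{A}=\catfont{C}_\monadfont{D}$, $\catfont{B}=\Spl(\catfont{C}^\monadfont{D})$ and $J$ as above then gives $\kar(\catfont{C}_\monadfont{D})\cong\Spl(\catfont{C}^\monadfont{D})$.

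The hard part will be the middle step: checking that a splitting of an idempotent computed in $\catfont{C}$ genuinely lifts to one in $\catfont{C}^\monadfont{D}$. Although each of the identities involved (the two algebra axioms for $(Y,y)$ and the compatibility of $i$ and $p$ with the structure maps) is a routine manipulation, this is the only place where the hypothesis ``idempotents split in $\catfont{C}$'' is used, so one should make sure nothing stronger is smuggled in and that the structure $y$ one builds is the expected one, independent up to the canonical isomorphism of the chosen splitting. Everything else --- the comparison functor, and the behaviour of $\kar(-)$ on fully faithful functors --- is standard and can simply be quoted.
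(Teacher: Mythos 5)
Your proposal is correct and takes essentially the same route as the paper: both identify $\Spl(\catfont{C}^\monadfont{D})$ with the retract-closure of the free algebras inside $\catfont{C}^\monadfont{D}$ (using that idempotents split there) and then recognise this retract-closure as the idempotent splitting completion of the full subcategory of free algebras, which is equivalent to $\catfont{C}_\monadfont{D}$. The only two points to state more carefully are one-liners you gloss over: free algebras belong to $\Spl(\catfont{C}^\monadfont{D})$ because $De_X$ is an algebra-homomorphism section of $m_X$ (not because ``a free algebra is a retract of itself''), and a retract $(Y,\beta)$ of a split algebra $(X,\alpha,t)$ via $i,p$ is again split, witnessed by the homomorphism $Dp\cdot t\cdot i$ --- this is not a particular case of idempotents splitting in $\catfont{C}^\monadfont{D}$, and the paper obtains it instead from projectivity of retracts of free algebras with respect to morphisms that are split epimorphisms in $\catfont{C}$.
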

We recall that an idempotent morphism $e:X\to X$ in a category $\catfont{A}$ splits if $e=s\cdot r$ for $r:X\to Y$ and $s:Y\to X$ in $\catfont{A}$ with $r\cdot s=1_Y$. One says that idempotents split in $\catfont{A}$ if every idempotent is of this form. Most ``everyday'' categories have this property since $s$ can be taken as the equaliser of $e$ and $1_X$ and necessarily $r$ as the induced morphism, or $r$ as the coequaliser of $e$ and $1_X$ and $s$ as the induced morphism; supposing here that these (co)limits exist. The arguably most prominent example of a (highly) non-complete category is $\REL$, and for instance the idempotent relation $<:\R\relto\R$ does not split in $\REL$. In any case, the idempotent splitting completion $\kar(\catfont{A})$ of $\catfont{A}$ has as objects pairs $(X,e)$ where $e$ is idempotent, and a morphism $f:(X,e)\to (X',e')$ in $\kar(\catfont{A})$ is an $\catfont{A}$-morphism $f:X\to X'$ such that $e'\cdot f=f=f\cdot e$. The category $\catfont{A}$ is fully embedded into $\kar(\catfont{A})$ via $X\mapsto(X,1_X)$, all idempotents split in $\kar(\catfont{A})$ and it is indeed the free idempotent splitting completion of $\catfont{A}$. To explain the latter, let $F:\catfont{A}\to\catfont{B}$ be a functor where idempotents split in $\catfont{B}$. One can construct now the (essentially unique) extension $\tilde{F}:\kar(\catfont{A})\to\catfont{B}$ as follows. For any object $(X,e)$ in $\kar(\catfont{A})$, define $\tilde{F}(X,e)$ as the idempotent splitting $FX\xrightarrow{r}\tilde{F}(X,e)\xrightarrow{s}FX$ of the idempotent $Fe$ in $\catfont{B}$; and for a morphism $f:(X,e)\to (X',e')$ in $\kar(\catfont{A})$ put $\tilde{F}f=r'\cdot Ff\cdot s$ where $r'$ and $s'$ split $Fe'$.

Since idempotents split in $\catfont{C}$, idempotents also split in $\catfont{C}^\monadfont{D}$. The objects of $\Spl(\catfont{C}^\monadfont{D})$ are triples $(X,\alpha,t)$ where $(X,\alpha)$ is an Eilenberg--Moore algebra for $\monadfont{D}$ and $t:X\to DX$ is an algebra homomorphism into the free algebra with $\alpha\cdot t=1_X$. The morphisms of $\Spl(\catfont{C}^\monadfont{D})$ are just the algebra homomorphism between the (underlying) algebras. Consequently, if an algebra $(X,\alpha)$ admits splittings $t,t':X\to DX$ then the identity map is an isomorphism between $(X,\alpha,t)$ and $(X,\alpha,t')$. Hence we might as well think of $\Spl(\catfont{C}^\monadfont{D})$ as the full subcategory of $\catfont{C}^\monadfont{D}$ defined by those algebras $(X,\alpha)$ which admit a splitting $t:X\to DX$ in $\catfont{C}^\monadfont{D}$. Note that, as shown in \citep{RW_Split}, if $\monadfont{D}$ is of Kock-Z\"oberlein type, then $(X,\alpha)$ admits at most one splitting which is necessarily left adjoint to $\alpha$.

A category where idempotents split is sometimes also called Cauchy complete, due to the fact that in the language of modules both properties (for categories and metric spaces respectively) are instances of the same definition. Therefore many properties we know about Cauchy completion of metric spaces are shared by $\kar(\catfont{A})$, for instance:

\begin{lemma}
Let $\catfont{A}$ be a full subcategory of $\catfont{B}$ and assume that idempotents split in $\catfont{B}$. Let $\overline{\catfont{A}}$ be the full subcategory of $\catfont{B}$ defined by the retracts of the objects in $\catfont{A}$. Then idempotents split in $\overline{\catfont{A}}$ and $\catfont{A}\to\overline{\catfont{A}}$ is the free idempotent splitting completion of $\catfont{A}$.  
\end{lemma}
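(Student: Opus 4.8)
The plan is to avoid a head-on verification of the (two-dimensional) universal property of the free idempotent splitting completion, and instead to reduce everything to the already-established universal property of $\kar(\catfont{A})$ recalled above. Concretely, I would produce a canonical comparison functor $\kar(\catfont{A})\to\overline{\catfont{A}}$ and show it is an equivalence; since an equivalence transports the defining universal property, the lemma follows.

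First I would check that idempotents split in $\overline{\catfont{A}}$. Let $e\colon X\to X$ be an idempotent in $\overline{\catfont{A}}$. Since $\overline{\catfont{A}}$ and $\catfont{A}$ are full in $\catfont{B}$, $e$ is an idempotent of $\catfont{B}$, hence by hypothesis it splits there: $e=s\cdot r$ with $r\colon X\to Y$, $s\colon Y\to X$ in $\catfont{B}$ and $r\cdot s=1_Y$. Now $Y$ is a retract of $X$, and $X$ is a retract of some object of $\catfont{A}$; since retracts compose ($(q\cdot p)\cdot(i\cdot j)=q\cdot(p\cdot i)\cdot j=q\cdot j=1$ when $p\cdot i=1$ and $q\cdot j=1$), $Y$ is a retract of an object of $\catfont{A}$, i.e.\ $Y\in\overline{\catfont{A}}$; and $r,s$ are morphisms of $\overline{\catfont{A}}$ by fullness. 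So $e$ splits in $\overline{\catfont{A}}$.

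Next, applying the universal extension recalled above to the inclusion $J\colon\catfont{A}\hookrightarrow\overline{\catfont{A}}$ (legitimate since idempotents now split in $\overline{\catfont{A}}$) yields a functor $\tilde J\colon\kar(\catfont{A})\to\overline{\catfont{A}}$ which restricts to $J$ on the copy of $\catfont{A}$ and sends $(A,e)$ to a chosen splitting $A\xrightarrow{r}X\xrightarrow{s}A$ of $e$ inside $\overline{\catfont{A}}$ (so $r\cdot s=1_X$, $s\cdot r=e$), with $\tilde J f=r'\cdot f\cdot s$ on morphisms. I would then show $\tilde J$ is an equivalence. It is essentially surjective: an object $X\in\overline{\catfont{A}}$ is by definition a retract $A\xrightarrow{r}X\xrightarrow{s}A$ of some $A\in\catfont{A}$, and then $e:=s\cdot r\colon A\to A$ is an idempotent of $\catfont{A}$ (fullness) with $X\cong\tilde J(A,e)$. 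For fully faithfulness, with splittings $A\xrightarrow{r}X\xrightarrow{s}A$ of $e$ and $A'\xrightarrow{r'}X'\xrightarrow{s'}A'$ of $e'$, the map $f\mapsto r'\cdot f\cdot s$ from $\kar(\catfont{A})\bigl((A,e),(A',e')\bigr)$ to $\overline{\catfont{A}}(X,X')$ has inverse $g\mapsto s'\cdot g\cdot r$: the latter lands in $\catfont{A}$ by fullness and is a $\kar$-morphism $(A,e)\to(A',e')$ because $e'\cdot(s'\cdot g\cdot r)=s'\cdot g\cdot r=(s'\cdot g\cdot r)\cdot e$ using $r'\cdot s'=1_{X'}$ and $r\cdot s=1_X$; the two round trips simplify to $e'\cdot f\cdot e=f$ (using $e'\cdot f=f=f\cdot e$) and $(r'\cdot s')\cdot g\cdot(r\cdot s)=g$ (using $r'\cdot s'=1_{X'}$, $r\cdot s=1_X$).

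Finally, since $\kar(\catfont{A})$ together with its canonical embedding is the free idempotent splitting completion of $\catfont{A}$, and $\tilde J$ is an equivalence commuting (up to isomorphism) with the embeddings of $\catfont{A}$, the pair $(\overline{\catfont{A}},J)$ enjoys the same universal property: given $F\colon\catfont{A}\to\catfont{C}$ with idempotents splitting in $\catfont{C}$, compose the essentially unique extension $\kar(\catfont{A})\to\catfont{C}$ with a quasi-inverse of $\tilde J$ to obtain the extension along $J$, and transport the uniqueness statement the same way. There is no real obstacle here; the only point requiring a little care is the bookkeeping of the coherence isomorphisms in this transport, because ``free idempotent splitting completion'' is a property up to natural isomorphism rather than an on-the-nose one — but this is entirely routine once $\tilde J$ has been identified as an equivalence.
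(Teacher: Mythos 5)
Your proposal is correct and takes essentially the same route as the paper: both verify that the splitting of an idempotent of $\overline{\catfont{A}}$ (taken in $\catfont{B}$) again lies in $\overline{\catfont{A}}$ because retracts of retracts of objects of $\catfont{A}$ are retracts of objects of $\catfont{A}$, and then identify $\overline{\catfont{A}}$ with $\kar(\catfont{A})$ compatibly with the embedding of $\catfont{A}$. The only cosmetic difference is the direction of the comparison: the paper defines $G:\overline{\catfont{A}}\to\kar(\catfont{A})$ by choosing for each object a split idempotent and pairs it with the functor coming from the universal property, checking both composites are isomorphic to identities, whereas you build $\tilde J:\kar(\catfont{A})\to\overline{\catfont{A}}$ from the universal property and verify fully faithfulness and essential surjectivity directly --- the same equivalence checked from the other side.
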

\begin{proof}
Every idempotent in $\overline{\catfont{A}}$ splits in $\catfont{B}$ and the splitting belongs to $\overline{\catfont{A}}$. By definition, every $B$ in $\overline{\catfont{A}}$ splits some idempotent $e:A\to A$ in $\catfont{A}$. If $B$ splits also $e':A'\to A'$ in $\catfont{A}$, so that $A\xrightarrow{r}B\xrightarrow{s}A$ and  $A'\xrightarrow{r'}B\xrightarrow{s}A'$ with $e=sr$, $rs=1_B$ and $e'=s'r'$, $r's'=1_B$, then $s'r:(A,e)\to(A',e')$  and $sr':(A',e')\to(A,e)$ are inverse to each other in $\kar(\catfont{A})$. Choosing for every $B$ in $\overline{\catfont{A}}$ such an idempotent $e:A\to A$ in $\catfont{A}$ defines the object part of a functor $G:\overline{\catfont{A}}\to\kar(\catfont{A})$, which sends a morphism $f:B\to B'$ in $\overline{\catfont{A}}$ to $s' f r:(A,e)\to(A',e')$ in $\kar(\catfont{A})$. With $F:\kar(\catfont{A})\to\overline{\catfont{A}}$ denoting a functor induced by the universal property, one verifies that both $GF$ and $FG$ are naturally isomorphic to the identity.
\end{proof}

Clearly, every algebra $(X,\alpha)$ which admit a splitting $t:X\to DX$ is a retract of the free algebra $DX$. Vice versa, if $(X,\alpha)$ is a retract of a free algebra, then $(X,\alpha)$ is projective with respect to those morphisms in $\catfont{C}^\monadfont{D}$ which are split epimorphisms in $\catfont{C}$, hence $\alpha:DX\to X$ admit a splitting $t:X\to DX$. Consequently, $\Spl(\catfont{C}^\monadfont{D})$ is the free idempotent splitting completion of full subcategory of $\catfont{C}^\monadfont{D}$ defined by the free algebras which is known to be equivalent to $\catfont{C}_\monadfont{D}$, and Theorem \ref{TheoremSplit} follows. 

Our principal object of interest here is the monad $\mPhi=\Phimonad$ on $\TOP$/$\AP$. We know already that the category of Eilenberg--Moore algebras of $\mPhi$ has $\Phi$-cocomplete T$_0$-spaces as objects, and $\Phi$-cocontinuous cont(inuous/ractive) maps as morphisms. The objects of $\Spl(\TOP^{\mPhi})$ respectively $\Spl(\AP^{\mPhi})$ are those $\Phi$-cocomplete T$_0$-spaces $X$ where $\Sup_X^\Phi:\Phi X\to X$ has a left adjoint adjoint. In the sequel we call such a space \emph{$\Phi$-distributive}, and denote the category of $\Phi$-distributive T$_0$-spaces and $\Phi$-cocontinuous cont(inuous/ractive) maps as $\DTop{\Phi}_{\sup}$/$\DApp{\Phi}_{\sup}$.

Combining Theorem \ref{TheoremSplit} with Theorem \ref{TheoremPhiKleisli} yields

\begin{theorem}\label{TheoremKarDistrib}
$\kar(\Mod{\Phi})^\op\cong\DTop{\Phi}_{\sup}$/$\DApp{\Phi}_{\sup}$.
\end{theorem}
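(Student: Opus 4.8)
The plan is to deduce the statement from Theorem~\ref{TheoremSplit} applied to the monad $\mPhi=\Phimonad$ on $\TOP$ (respectively on $\AP$), together with the description of $\Mod{\Phi}$ as a Kleisli category provided by Theorem~\ref{TheoremPhiKleisli}. The first step is the (easy) remark that idempotents split in $\TOP$ and in $\AP$: both categories are complete, so given an idempotent $e\colon X\to X$ the equaliser $s\colon Y\to X$ of $e$ and $1_X$ together with the induced map $r\colon X\to Y$ satisfies $sr=e$ and $rs=1_Y$. Hence Theorem~\ref{TheoremSplit} applies and gives $\kar(\TOP_{\mPhi})\cong\Spl(\TOP^{\mPhi})$ and $\kar(\AP_{\mPhi})\cong\Spl(\AP^{\mPhi})$.

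The second step is to identify $\Spl(\TOP^{\mPhi})$ with $\DTop{\Phi}_{\sup}$ (and $\Spl(\AP^{\mPhi})$ with $\DApp{\Phi}_{\sup}$), which amounts to matching definitions. The Eilenberg--Moore category $\TOP^{\mPhi}$ is $\Cocts{\Phi}_\sep$, the category of $\Phi$-cocomplete $T_0$-spaces and $\Phi$-cocontinuous maps, and $\Spl(\TOP^{\mPhi})$ is the full subcategory on those algebras $(X,\Sup_X^\Phi)$ admitting a splitting $t_X\colon X\to\Phi X$ in $\TOP^{\mPhi}$. Since $\mPhi$ is of Kock--Z\"oberlein type, such a splitting is unique when it exists and is precisely a left adjoint of $\Sup_X^\Phi$ in $\TOP$ (cf.\ Remark~\ref{KZ} and the discussion of $\Spl$ preceding this theorem); conversely, a left adjoint of $\Sup_X^\Phi$ is automatically a splitting. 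Thus the objects of $\Spl(\TOP^{\mPhi})$ are exactly the $\Phi$-distributive $T_0$-spaces and its morphisms are exactly the $\Phi$-cocontinuous maps, so $\Spl(\TOP^{\mPhi})=\DTop{\Phi}_{\sup}$ on the nose.

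The final step uses Theorem~\ref{TheoremPhiKleisli}, by which $\Mod{\Phi}$ is dually equivalent to the Kleisli category, i.e.\ $\Mod{\Phi}\cong(\TOP_{\mPhi})^\op$ (resp.\ $\Mod{\Phi}\cong(\AP_{\mPhi})^\op$). As the conditions defining a split idempotent and a morphism of $\kar(\catfont{A})$ are self-dual, one has $\kar(\catfont{A}^\op)\cong\kar(\catfont{A})^\op$ for any category $\catfont{A}$, whence
\[
\kar(\Mod{\Phi})^\op\cong\kar\!\bigl((\TOP_{\mPhi})^\op\bigr)^\op\cong\bigl(\kar(\TOP_{\mPhi})^\op\bigr)^\op=\kar(\TOP_{\mPhi})\cong\Spl(\TOP^{\mPhi})=\DTop{\Phi}_{\sup},
\]
and the same chain proves the approach version. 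I do not expect a real obstacle: the one place requiring care is the second step, where however the Kock--Z\"oberlein property established in Section~\ref{SectionRelative} makes the identification $\Spl(\TOP^{\mPhi})=\DTop{\Phi}_{\sup}$ essentially automatic. Finally, since every functor in sight preserves the pointwise order on hom-sets, the displayed isomorphism upgrades to a $2$-equivalence of ordered categories.
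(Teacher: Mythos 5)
Your argument is correct and is essentially the paper's own proof: the theorem is obtained there precisely by combining Theorem~\ref{TheoremSplit} (applied to the Kock--Z\"oberlein monad $\mPhi$ on $\TOP$/$\AP$, where idempotents split) with the dual description of $\Mod{\Phi}$ as the Kleisli category from Theorem~\ref{TheoremPhiKleisli}, after identifying $\Spl(\TOP^{\mPhi})$/$\Spl(\AP^{\mPhi})$ with the $\Phi$-distributive T$_0$-spaces and $\Phi$-cocontinuous maps exactly as you do. Your filling-in of the routine details (splitting of idempotents via equalisers, $\kar(\catfont{A}^\op)\cong\kar(\catfont{A})^\op$, uniqueness of splittings for Kock--Z\"oberlein monads) is accurate and consistent with the paper.
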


Of course, the equivalence above is induced by the equivalence $\varphi:X\kmodto X'\mapsto(-\kleisli\varphi):\Phi X'\to\Phi X$ between $\Mod{\Phi}^\op$ and the full subcategory of $\Cocts{\Phi}_\sep$ defined by the free algebras. Accordingly, the corresponding functors
\begin{align*}
 S:\kar(\Mod{\Phi})^\op\to\DTop{\Phi}_{\sup}/\DApp{\Phi}_{\sup} &&\text{and}&&
 I:\DTop{\Phi}_{\sup}/\DApp{\Phi}_{\sup}\to \kar(\Mod{\Phi})^\op
\end{align*}
can be constructed as follows. For $(X,\theta)$ in $\kar(\Mod{\Phi})$, let $\Phi X\xrightarrow{r}S(X,\theta)\xrightarrow{s}\Phi X$ be a splitting of the idempotent $-\kleisli\theta:\Phi X\to\Phi X$; to have something concrete,
\[
 S(X,\theta)=\{\psi\in\Phi\mid\psi\kleisli\theta=\psi\},
\]
$r:\Phi X\to S(X,\theta),\,\psi\mapsto\psi\kleisli\theta$ and $s:S(X,\theta)\to\Phi X$ is the inclusion functor. If $\varphi:(X,\theta)\to (X',\theta')$, then $S\varphi:S(X,\theta)\to S(X',\theta')$ sends $\psi\in S(X,\theta)$ to $\psi\kleisli\theta$. Let now $X$ be a $\Phi$-distributive T$_0$-space with $\yoneda_X^\Phi\vdash\Sup_X^\Phi\vdash t$. Then $t:X\to\Phi X$ corresponds to a module $\theta:X\kmodto X$ in $\Mod{\Phi}$ which is necessarily idempotent. Furthermore, $\Phi X\xrightarrow{\Sup_X^\Phi} X\xrightarrow{\;t\;}\Phi X$ splits the idempotent $-\kleisli\theta:\Phi X\to \Phi X$, and therefore $I(X)$ can be taken as $(X,\theta)$. Accordingly, for $f:X\to X'$ one calculates now $I(f)=\theta'\kleisli f^*\kleisli\theta$, in the sequel we denote $\theta'\kleisli f^*\kleisli\theta$ also as $f^\#$. Note that both functors $S$ and $I$ are actually 2-functors.

For a $\Phi$-distributive T$_0$-space $X$, the natural isomorphism $X\cong SI(X)$ stems from the fact that both $X$ and $S(X,\theta)$ split the idempotent $-\kleisli\theta:\Phi X\to\Phi X$. Hence,
\begin{align*}
 X\to S(X,\theta),\,x\mapsto x^*\kleisli\theta &&\text{and}&&
S(X,\theta)\to X,\,\psi\mapsto\Sup_X^\Phi(\psi)
\end{align*}
are inverse to each other. Certainly, also $(X,\theta)\cong IS(X,\theta)$ for every $(X,\theta)$ in $\kar(\Mod{\Phi})$, but to describe the natural isomorphism $(X,\theta)\kmodto IS(X,\theta)$ we need some notation.

For $(X,\theta)$ in $\kar(\Mod{\Phi})$ we define $\widehat{\theta}=(-\kleisli\theta)\cdot\mate{\theta}:X\to S(X,\theta)$, which is indeed just the corestriction of $\mate{\theta}:X\to\Phi X$ to $S(X,\theta)$. Furthermore, we put ${\widehat{\theta}}_+=\widehat{\theta}_*\kleisli\theta$ and $\widehat{\theta}^+=\theta\kleisli\widehat{\theta}^*$. Note that $\widehat{\theta}^+\kleisli\widehat{\theta}_+=\theta$ since $\widehat{\theta}^*\kleisli \widehat{\theta}_*={\mate{\theta}}^*\kleisli{\mate{\theta}}_*=\fspstr{U\mate{\theta}(-)}{\mate{\theta}(-)}=\theta\whiteleft\theta$ by Theorem \ref{TheoremGenYoneda}, idempotency of $\theta$ gives $\theta\le\theta\whiteleft\theta$, and therefore $\theta=\theta\kleisli\theta\kleisli\theta\le\theta\kleisli(\theta\whiteleft\theta)\kleisli\theta\le\theta\kleisli\theta=\theta$. One easily verifies that the suprema in $S(X,\theta)$ are given by
\[
 \Sup_{S(X,\theta)}^\Phi:\Phi S(X,\theta)\to S(X,\theta),\,\Psi\mapsto \Psi\kleisli\widehat{\theta}_+,
\]
and the left adjoint of $\Sup_{S(X,\theta)}^\Phi$ by
\[
 t:S(X,\theta)\to \Phi S(X,\theta),\,\psi\mapsto\psi\kleisli\widehat{\theta}^+.
\]
Therefore $t\cdot \Sup_{S(X,\theta)}^\Phi$ sends $\psi$ to $\psi\kleisli\widehat{\theta}_+\kleisli\widehat{\theta}^+$, hence $t=\mate{\omega}$ for $\omega=\widehat{\theta}_+\kleisli\widehat{\theta}^+$. Since $S(X,\theta)$ splits both $-\kleisli\theta$ and $-\kleisli\omega$,
\[
\xymatrix@C=1em{\Phi X\ar[rr]^{-\kleisli\theta}\ar[dr] && \Phi X\\
 & S(X,\theta)\ar[ur]\ar[dr]^t\\
\Phi S(X,\theta)\ar[ur]^{\Sup_{S(X,\theta)}^\Phi}\ar[rr]_ {-\kleisli\omega}&& \Phi S(X,\theta)}
\]
$(X,\theta)$ and $(S(X,\theta),\omega)$ are naturally isomorphic in $\kar(\Mod{\Phi})$ via
\begin{align*}
\widehat{\theta}_+:(X,\theta)\kmodto (S(X,\theta),\omega)
&&\text{and}&&
\widehat{\theta}^+:(S(X,\theta),\omega)\kmodto (X,\theta).
\end{align*}
Finally, we note that the diagrams
\begin{align*}
\xymatrix@C=4em{(X,\theta)\ar@{-^>}[r]|-{\object@{o}}^\varphi
\ar@{-^>}[d]|-{\object@{o}}_{\widehat{\theta}_+}
& (X',\theta')\ar@{-^>}[d]|-{\object@{o}}^{\widehat{\theta'}_+}\\
(S(X,\theta),\omega)\ar@{-^>}[r]|-{\object@{o}}_{(-\kleisli\varphi)^\#}
& (S(X',\theta'),\omega')}
&&
\xymatrix@C=3em{X\ar[r]^f\ar[d]_{\widehat{\theta}} & X'\ar[d]^{\widehat{\theta'}}\\
S(X,\theta)\ar[r]_{-\kleisli f^\#} & S(X',\theta')}
\end{align*}
commute, for $\varphi:(X,\theta)\to(X',\theta')$ in $\kar(\Mod{\Phi})$ and $f:X\to Y$ in $\DTop{\Phi}_{\sup}$/$\DApp{\Phi}_{\sup}$.

Following \citep{RW_CCD4} (and motivated by \citep{Vic_Infosys}) we consider the category $\Inf{\Phi}$ whose objects are pairs $(X,\theta)$ where $X$ is a topological/approach space and $\theta:X\kmodto X$ is an idempotent relation in $\Mod{\Phi}$, and whose morphism $f:(X,\theta)\to(X',\theta')$ are $\Phi$-dense cont(inuous/ractive) maps $f:X\to X'$ satisfying $\theta(\frx,x)\le\theta'(Uf(\frx),f(x))$, for each $\frx\in UX$ and $x\in X$, that is, $\theta\le f^*\kleisli\theta'\kleisli f_*$ or, equivalently, $f_*\kleisli\theta\le\theta'\kleisli f_*$.
\begin{example}
For each $(X,\theta)$ in $\kar(\Mod{\Phi})$, $\widehat{\theta}:X\to S(X,\theta)$ is a morphism in $\Inf{\Phi}$.
\end{example}
To each $f:(X,\theta)\to(X',\theta')$ in $\Inf{\Phi}$ we associate modules
\begin{align*}
f_+=\theta'\kleisli f_*\kleisli\theta &&\text{and}&& f^+=\theta\kleisli f_*\kleisli\theta'
\end{align*}
in $\Mod{\Phi}$, and then $f_+:(X,\theta)\kmodto(X',\theta')$ and $f^+:(X',\theta')\kmodto(X,\theta)$ are morphisms in $\kar(\Mod{\Phi})$ and $f_+\dashv f^+$. Furthermore, these constructions define functors
\begin{align*}
(-)_+:\Inf{\Phi}\to \kar(\Mod{\Phi}) &&\text{and}&& (-)^+:\Inf{\Phi}^\op\to\kar(\Mod{\Phi})
\end{align*}
with $X_+=X=X^+$. For a $\Phi$-distributive space $X=(X,a)$ we consider the equaliser $i:A\to X$ of $\yoneda_X^\Phi,\mate{\theta}:X\to\Phi X$ and observe that, for $\frx\in UA$ and $x\in A$,
\[
a(\frx,x)=\fspstr{U\yoneda_X^\Phi(\frx)}{\yoneda_X^\Phi(x)}
=\fspstr{U\yoneda_X^\Phi(\frx)}{\mate{\theta}(x)}=\theta(\frx,x).
\]
Hence $i:(A,a)\to(X,\theta)$ lives in $\Inf{\Phi}$, and we have $i_+\dashv i^+$ in $\kar(\Mod{\Phi})$, but also $i_+:A\kmodto X\dashv i^+:X\kmodto A$ in $\Mod{\mU}$ since $\theta\le a$. Furthermore,
\[
 i^+=i^*\kleisli\theta=\theta(-,i(-))=\fspstr{-}{i(-)}=i^*,
\]
and hence also $i_+=i_*$.

We call a $\Phi$-distributive T$_0$-space $X$ \emph{$\Phi$-algebraic} if $X$ is isomorphic to a space of form $\Phi Y$. Moving to the other side of the equivalence, $X$ is $\Phi$-algebraic if and only if $(X,\theta)$ is isomorphic to some $(Y,(1_Y)_*)$ in $\kar(\Mod{\Phi})$.  Let $X$ be $\Phi$-algebraic, and assume that $\alpha:(Y,(1_Y)_*)\kmodto(X,\theta)$ and $\beta:(X,\theta)\kmodto(Y,(1_Y)_*)$ are inverse to each other in $\kar(\Mod{\Phi})$. As above one verifies that $\alpha:Y\kmodto X$ is left adjoint to $\beta:X\kmodto Y$ in $\Mod{\Phi}$, and, since $X$ is $\Phi$-cocomplete, $\alpha=f_*$ and $\beta=f^*$ for some cont(inuous/ractive) map $f:Y\to X$. Furthermore, $f$ equalises $\yoneda_X^\Phi,\mate{\theta}:X\to\Phi X$ since $f^*\kleisli\theta=f^*$. We write $i:A\to X$ for the equaliser of $\yoneda_X^\Phi,\mate{\theta}:X\to\Phi X$, and $h:Y\to A$ for the map induced by $f$. Then $f^*=h^*\kleisli i^*$, hence $h^*=f^*\kleisli i_*$ and
\begin{align*}
i^*\kleisli f_*\kleisli h^*&= i^*\kleisli f_*\kleisli f^*\kleisli i_*
=i^*\kleisli\theta\kleisli i_*=i^*\kleisli i_*=(1_A)_*,\\
f_*\kleisli h^*\kleisli i^*&=f_*\kleisli f^*=\theta\le (1_X)_*.
\end{align*}
Therefore $f_*\kleisli h^*\dashv i^*$ in $\Mod{\mU}$, which implies $i_*=f_*\kleisli h^*\in\Mod{\Phi}$. Clearly, $i^*\kleisli i_*=(1_A)_*$, but also $i_*\kleisli i^*=\theta$ since
\[
\theta=f_*\kleisli f^*=i_*\kleisli h_*\kleisli h^*\kleisli i^*\le i_*\kleisli i^*=i_+\kleisli i^+\le\theta.
\]
\begin{proposition}
Let $X$ a $\Phi$-distributive T$_0$ space, and $i:A\to X$ be the equaliser of $\yoneda_X^\Phi,\mate{\theta}:X\to\Phi X$. Then $X$ is $\Phi$-algebraic if and only if $i$ is $\Phi$-dense and $i_*\kleisli i^*=\theta$.
\end{proposition}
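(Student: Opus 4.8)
The plan is to prove the two implications separately, since the ``only if'' part has in effect already been carried out in the discussion preceding the statement. For that direction, suppose $X$ is $\Phi$-algebraic, so that $(X,\theta)$ is isomorphic in $\kar(\Mod{\Phi})$ to some $(Y,(1_Y)_*)$. Using that module-adjunctions into the $\Phi$-cocomplete space $X$ are representable, one extracts a cont(inuous/ractive) map $f\colon Y\to X$ with $f_*\dashv f^*$, $f^*\kleisli\theta=f^*$ and $f^*\kleisli f_*=\theta$; since $f$ then equalises $\yoneda_X^\Phi$ and $\mate{\theta}$, it factors as $f=i\cdot h$, giving $f^*=h^*\kleisli i^*$ and $h^*=f^*\kleisli i_*$. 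The short Kleisli computation already displayed shows $f_*\kleisli h^*\dashv i^*$ in $\Mod{\mU}$, which forces $i_*=f_*\kleisli h^*\in\Mod{\Phi}$ (so $i$ is $\Phi$-dense) and, via $\theta=f_*\kleisli f^*=i_*\kleisli h_*\kleisli h^*\kleisli i^*\le i_*\kleisli i^*\le\theta$, also $i_*\kleisli i^*=\theta$. So for this direction it suffices to assemble these lines.

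For ``if'', assume $i$ is $\Phi$-dense and $i_*\kleisli i^*=\theta$. The key preliminary fact is that, being the equaliser of two cont(inuous/ractive) maps, $i\colon A\to X$ is an embedding, hence initial (fully faithful), so $i^*\kleisli i_*=(1_A)_*$; moreover $i^*$ always lies in $\Mod{\Phi}$ and $i_*$ lies in $\Mod{\Phi}$ by $\Phi$-density. I would then check that
\[
i_*\colon(A,(1_A)_*)\kmodto(X,\theta)\qquad\text{and}\qquad i^*\colon(X,\theta)\kmodto(A,(1_A)_*)
\]
are morphisms of $\kar(\Mod{\Phi})$: the identities $\theta\kleisli i_*=i_*$ and $i^*\kleisli\theta=i^*$ follow by substituting $\theta=i_*\kleisli i^*$ and cancelling $i^*\kleisli i_*=(1_A)_*$ using the module axioms for the identity distributor. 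Finally $i^*\kleisli i_*=(1_A)_*$ and $i_*\kleisli i^*=\theta$ say exactly that these two morphisms are mutually inverse, so $(X,\theta)\cong(A,(1_A)_*)$ in $\kar(\Mod{\Phi})$; by the characterisation of $\Phi$-algebraic spaces recalled above (isomorphism in $\kar(\Mod{\Phi})$ to an object of the form $(Y,(1_Y)_*)$), $X$ is $\Phi$-algebraic.

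The argument is essentially formal once the two preparatory facts are in place, so there is no serious obstacle. The points deserving care are: (a) that $i^*\kleisli i_*=(1_A)_*$, which one obtains from $i$ being an embedding, or equivalently from $\mate{\theta}(i(x))=\yoneda_X^\Phi(i(x))$ for $x\in A$; and (b) that \emph{both} hypotheses are genuinely used --- $\Phi$-density is what puts $i_*$ inside $\Mod{\Phi}$, while $i_*\kleisli i^*=\theta$ (the reverse of the inequality that always holds) is what makes the relevant composite the chosen idempotent $\theta$ rather than merely some idempotent below it. I expect the verification of the two $\kar(\Mod{\Phi})$-morphism identities to be the most computational step, but it is just a couple of applications of Kleisli composition.
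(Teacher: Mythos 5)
Your proof is correct and matches the paper's: the ``only if'' direction is exactly the computation carried out in the paragraph preceding the proposition, and the ``if'' direction --- which the paper leaves implicit --- is the expected verification that, thanks to $\Phi$-density and $i^*\kleisli i_*=(1_A)_*$ (from $i$ being an embedding, as an equaliser in $\TOP$/$\AP$), the modules $i_*$ and $i^*$ are mutually inverse morphisms between $(A,(1_A)_*)$ and $(X,\theta)$ in $\kar(\Mod{\Phi})$. Only a small transposition in your summary of the forward direction: the inverse pair gives $f_*\kleisli f^*=\theta$ and $f^*\kleisli f_*=(1_Y)_*$, not $f^*\kleisli f_*=\theta$; your displayed computation already uses the correct form.
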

The full subcategory of $\DTop{\Phi}$ respectively $\DApp{\Phi}$ determined by the $\Phi$-algebraic spaces we denotes as  $\ATop{\Phi}$ and $\AApp{\Phi}$ respectively.
\begin{theorem}
$\Mod{\Phi}$ is dually equivalent to $\ATop{\Phi}_{\sup}/\AApp{\Phi}_{\sup}$.
\end{theorem}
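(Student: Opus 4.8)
The plan is to obtain this dual equivalence by restricting, to an appropriate full subcategory, one of the two equivalences already at our disposal: either the dual equivalence $\kar(\Mod{\Phi})^\op\cong\DTop{\Phi}_{\sup}/\DApp{\Phi}_{\sup}$ of Theorem~\ref{TheoremKarDistrib}, or the dual equivalence $\Mod{\Phi}\simeq(\TOP_{\mPhi})^\op/(\AP_{\mPhi})^\op$ of Theorem~\ref{TheoremPhiKleisli}. I would take the former as the main route. Recall that $\ATop{\Phi}_{\sup}/\AApp{\Phi}_{\sup}$ is by definition the \emph{full} subcategory of $\DTop{\Phi}_{\sup}/\DApp{\Phi}_{\sup}$ on the $\Phi$-algebraic spaces, and that — as recorded just before the Proposition above — a $\Phi$-distributive $T_0$-space $X$, with associated object $(X,\theta)=I(X)$ of $\kar(\Mod{\Phi})$, is $\Phi$-algebraic if and only if $(X,\theta)$ is isomorphic in $\kar(\Mod{\Phi})$ to some $(Y,(1_Y)_*)$. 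Hence, transporting along the equivalence $S\colon\kar(\Mod{\Phi})^\op\to\DTop{\Phi}_{\sup}$ of Theorem~\ref{TheoremKarDistrib} (which, being an equivalence, carries full subcategories to full subcategories), $\ATop{\Phi}_{\sup}/\AApp{\Phi}_{\sup}$ corresponds exactly to the full, replete subcategory $\calE$ of $\kar(\Mod{\Phi})$ whose objects are those isomorphic to some $(Y,(1_Y)_*)$.

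Next I would invoke the general fact that the canonical embedding $\catfont{A}\hookrightarrow\kar(\catfont{A})$ is fully faithful, applied to $\catfont{A}=\Mod{\Phi}$, noting that $(1_Y)_*$ is precisely the identity $\mU$-module on $Y$ so that this embedding is $Y\mapsto(Y,(1_Y)_*)$ and a $\kar(\Mod{\Phi})$-morphism $(Y,(1_Y)_*)\to(Y',(1_{Y'})_*)$ is just a $\mU$-module of $\Mod{\Phi}$. Its essential image is exactly $\calE$; since a fully faithful functor is an equivalence onto its essential image, we get $\Mod{\Phi}\simeq\calE$. Combining with the previous paragraph, $(\ATop{\Phi}_{\sup})^\op\simeq\calE\simeq\Mod{\Phi}$, and likewise in the approach case, which is the assertion. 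An essentially equivalent, even shorter argument proceeds from Theorem~\ref{TheoremPhiKleisli}: the comparison functor $\TOP_{\mPhi}\to\DTop{\Phi}_{\sup}$, $Y\mapsto\Phi Y$, is fully faithful (comparison functors into Eilenberg--Moore categories always are, and $\DTop{\Phi}_{\sup}=\Spl(\TOP^{\mPhi})$ is full in $\TOP^{\mPhi}$ and contains every free algebra), and its replete image is $\ATop{\Phi}_{\sup}$; hence $\TOP_{\mPhi}\simeq\ATop{\Phi}_{\sup}$, and dualising Theorem~\ref{TheoremPhiKleisli} gives the claim. Both routes also show the equivalence is induced by $\varphi\colon X\kmodto X'\mapsto(-\kleisli\varphi)$, as in the absolute case.

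There is no genuine difficulty here beyond bookkeeping, and that is where I would be careful. One must use that $\ATop{\Phi}_{\sup}/\AApp{\Phi}_{\sup}$ is taken full \emph{and closed under isomorphism}, so that replacing ``the free algebras $\Phi Y$'' (resp.\ ``the objects $(Y,(1_Y)_*)$'') by their replete closures does not change the category up to equivalence; one must check that every free algebra $\Phi Y$ is indeed a $T_0$ $\Phi$-distributive space, hence an object of $\DTop{\Phi}_{\sup}$ — this follows from Corollary~\ref{PhiXcocompl} together with the string of adjunctions $\yoneda_{\Phi Y}^\Phi\vdash\Sup_{\Phi Y}^\Phi=(-\kleisli(\yoneda_Y^\Phi)_*)\vdash(-\kleisli(\yoneda_Y^\Phi)^*)$, the last module lying in $\Mod{\Phi}$ since it is of the form $f^*$; and one must note that the equivalences $S$, $I$ of Theorem~\ref{TheoremKarDistrib} restrict to mutually quasi-inverse equivalences between the chosen full subcategories. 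Each of these points is immediate from what has already been established in this section.
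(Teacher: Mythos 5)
Your proposal is correct and takes essentially the same route as the paper: there, too, the duality is obtained by restricting the equivalence $S$ of Theorem~\ref{TheoremKarDistrib} to the $\Phi$-algebraic spaces, using that these correspond exactly to the objects of $\kar(\Mod{\Phi})$ isomorphic to some $(Y,(1_Y)_*)$, i.e.\ to the replete image of the full embedding $\Mod{\Phi}\hrw\kar(\Mod{\Phi})$. The only cosmetic difference is that the paper describes the quasi-inverse concretely, replacing $(X,\theta)$ by the isomorphic $(A,(1_A)_*)$ with $i:A\to X$ the equaliser of $\yoneda_X^\Phi$ and $\mate{\theta}$, where you argue abstractly via essential images (and additionally sketch the equivalent Kleisli-comparison route).
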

The functor $S:\Mod{\Phi}^\op\to\ATop{\Phi}/\AApp{\Phi}$ is just the restriction of $S:\kar(\Mod{\Phi})^\op\to\DTop{\Phi}/\DApp{\Phi}$, its inverse $C:\ATop{\Phi}/\AApp{\Phi}\to\Mod{\Phi}^\op$ substitutes $(X,\theta)$ by the isomorphic $(A,(1_A)_*)$ where $i:A\to X$ denotes the equaliser of $\yoneda_X^\Phi,\mate{\theta}:X\to\Phi X$, and accordingly sends $f:X\to X'$ to the restriction of $f^*$ to $A$ and $A'$, that is, to $i^*\kleisli f^*\kleisli {i'}_*$.
\begin{lemma}
For $X$ in $\Mod{\Phi}$, the equaliser of $\Phi(\yoneda_X^\Phi),\yoneda_{\Phi X}^\Phi:\Phi X\to\Phi\Phi X$ is given by
\[
 \tilde{X}_\Phi:=\{\psi\in\Phi X\mid \psi:X\kmodto 1 \text{ is right adjoint in }\Mod{\mU}\}\hrw\Phi X
\]
\end{lemma}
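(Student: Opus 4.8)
The plan is to reduce the statement to its absolute counterpart, Lemma~\ref{LemAdjModEq}, by means of a commuting square of spaces. Write $i\colon\Phi X\hrw PX$ for the canonical embedding. Since $\yoneda_X^\Phi$ is the corestriction of $\yoneda_X$, we have $\yoneda_X=i\cdot\yoneda_X^\Phi$, hence $\yoneda_X^*=(\yoneda_X^\Phi)^*\kleisli i^*$ in $\Mod{\mU}$; and since $i$ is an embedding it is fully faithful, i.e.\ $i^*\kleisli i_*=(1_{\Phi X})_*$. Recall also from Section~\ref{SectCocomplSp} that $\Phi f$ is the restriction of $Pf=-\kleisli f^*$, so that $\Phi\yoneda_X^\Phi\colon\Phi X\to\Phi\Phi X$ is the restriction of $-\kleisli(\yoneda_X^\Phi)^*\colon PX\to P\Phi X$, while $\yoneda_{\Phi X}^\Phi\colon\Phi X\to\Phi\Phi X$ is the corestriction of the absolute Yoneda embedding $\yoneda_{\Phi X}\colon\Phi X\to P\Phi X$.

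Next I would introduce the map $j\colon\Phi\Phi X\hrw P\Phi X\xrightarrow{-\kleisli i^*}PPX$. This $j$ is injective, because the continuous/contractive map $-\kleisli i_*\colon PPX\to P\Phi X$ is a retraction of $-\kleisli i^*$ by virtue of $i^*\kleisli i_*=(1_{\Phi X})_*$. I then claim that the square
\[
\xymatrix{\Phi X\ar@<0.7ex>[r]^{\Phi\yoneda_X^\Phi}\ar@<-0.7ex>[r]_{\yoneda_{\Phi X}^\Phi}\ar[d]_i & \Phi\Phi X\ar[d]^j\\ PX\ar@<0.7ex>[r]^{P\yoneda_X}\ar@<-0.7ex>[r]_{\yoneda_{PX}} & PPX}
\]
commutes, in the sense that $j\cdot\Phi\yoneda_X^\Phi=P\yoneda_X\cdot i$ and $j\cdot\yoneda_{\Phi X}^\Phi=\yoneda_{PX}\cdot i$. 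The first equation unwinds to $\psi\kleisli(\yoneda_X^\Phi)^*\kleisli i^*=\psi\kleisli\yoneda_X^*$, which is exactly the identity $(\yoneda_X^\Phi)^*\kleisli i^*=\yoneda_X^*$ noted above together with associativity of Kleisli composition; the second is precisely naturality of the Yoneda transformation $\yoneda\colon 1\to P$ at $i$.

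With the square available the lemma is immediate. For $\psi\in\Phi X$ one has $\Phi\yoneda_X^\Phi(\psi)=\yoneda_{\Phi X}^\Phi(\psi)$ if and only if $P\yoneda_X(i\psi)=\yoneda_{PX}(i\psi)$ — the ``only if'' by applying $j$ and using commutativity, the ``if'' by using commutativity and injectivity of $j$ — and by Lemma~\ref{LemAdjModEq} the latter equality holds exactly when $\psi$, viewed as a module $X\kmodto 1$, is right adjoint in $\Mod{\mU}$. Since the equaliser of two parallel continuous/contractive maps is the subspace carried by the equalising subset, this identifies the equaliser of $\Phi\yoneda_X^\Phi$ and $\yoneda_{\Phi X}^\Phi$ with $\tilde{X}_\Phi\hrw\Phi X$.

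I expect the only genuinely delicate part to be the bookkeeping underlying the commuting square: keeping straight over which space each module and presheaf lives, which Kleisli composition is intended when verifying $j\cdot\Phi\yoneda_X^\Phi=P\yoneda_X\cdot i$, and the elementary identities $(\yoneda_X^\Phi)^*\kleisli i^*=\yoneda_X^*$ and $i^*\kleisli i_*=(1_{\Phi X})_*$. Everything else is formal, and the substantive input — the absolute characterisation of right adjoint modules as the equaliser of $P\yoneda_X$ and $\yoneda_{PX}$ — is supplied by Lemma~\ref{LemAdjModEq}.
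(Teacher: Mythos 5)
Your argument is correct: the paper states this lemma without proof (evidently regarding it as immediate from the absolute case), and your reduction via the square $j\cdot\Phi\yoneda_X^\Phi=P\yoneda_X\cdot i$, $j\cdot\yoneda_{\Phi X}^\Phi=\yoneda_{PX}\cdot i$ together with injectivity of $j$ is exactly the intended route, with Lemma \ref{LemAdjModEq} supplying the substance. The supporting identities you invoke all hold in this setting --- $(\yoneda_X^\Phi)^*\kleisli i^*=\yoneda_X^*$ by functoriality of $(-)^*$ applied to $\yoneda_X=i\cdot\yoneda_X^\Phi$, naturality of $\yoneda$ at $i$, and $i^*\kleisli i_*=(1_{\Phi X})_*$ since the subspace embedding $i$ is fully faithful --- so nothing further is needed.
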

We write $\eta_X^\Phi:X\to CS(X)$ for the restriction of the Yoneda embedding $\yoneda_X^\Phi$ to $\tilde{X}_\Phi$, then the isomorphism $X\kmodto CS(X)$ is given by $(\eta_X^\Phi)_*$. For a $\Phi$-algebraic space $X$, the isomorphism $SC(X)\to X$ is the restriction of $\Sup_X^\Phi$ to $\Phi A$.

Since both $S$ and $C$ are indeed 2-functors, we obtain immediately that the category $\map(\Mod{\Phi})$ of left adjoint modules in $\Mod{\Phi}$ is dually equivalent to the category $\ATop{\Phi}/\AApp{\Phi}$ of $\Phi$-algebraic spaces and right adjoint $\Phi$-cocontinuous cont(inuous/ractive) maps between them. We call a T$_0$-space $X$ \emph{$\Phi$-sober} if each $\varphi:Y\kmodto X$ in $\map(\Mod{\Phi})$ is of the form $\varphi=f_*$ for some (unique) $f:Y\to X$. Note that each space of the form $\Phi X$ is $\Phi$-sober. More importantly for us, also $\tilde{X}_\Phi$ is $\Phi$-sober which can be seen as follows. For any $\Psi:\tilde{X}_\Phi\kmodto 1$ in $\Mod{\Phi}$ which is right adoint in $\Mod{\mU}$ put $\psi=\Psi\kleisli(\eta_X^\Phi)_*$, then $\psi\in\tilde{X}_\Phi$ and  $\Psi=\psi\kleisli(\eta_X^\Phi)^*=\psi^*\kleisli(\eta_X^\Phi)_*\kleisli(\eta_X^\Phi)^*=\psi^*$. We write $\Sob{\Phi}$ for the category of $\Phi$-sober spaces and $\Phi$-dense maps, the considerations above imply that $(-)_*:\Sob{\Phi}\to\map(\Mod{\Phi})$ is an equivalence of categories. Therefore
\begin{theorem}\label{TheoremDualSobAlg}
$\Sob{\Phi}$ is dually equivalent to $\ATop{\Phi}/\AApp{\Phi}$.
\end{theorem}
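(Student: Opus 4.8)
The statement will follow by composing two equivalences that the preceding development has already put in place: the (in fact $2$-)equivalence $(-)_*\colon\Sob{\Phi}\to\map(\Mod{\Phi})$ exhibited just above, and the dual equivalence between $\map(\Mod{\Phi})$ and $\ATop{\Phi}/\AApp{\Phi}$ obtained by cutting the $2$-functors $S$ and $C$ down to adjoint $1$-cells. So the plan is to assemble these pieces and check that the composite is again a dual equivalence; there is no hard new computation here, only the bookkeeping of variances.

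First I would make precise that $(-)_*$ really lands in $\map(\Mod{\Phi})$ and is an equivalence there. For a morphism $f\colon X\to Y$ of $\Sob{\Phi}$, i.e.\ a $\Phi$-dense cont(inuous/ractive) map, one has $f_*\in\Mod{\Phi}$ by definition of $\Phi$-density, and $f_*\dashv f^*$ in $\Mod{\mU}$ with $f^*\in\Mod{\Phi}$ (since $\Mod{\Phi}$ contains $f^*$ for every cont(inuous/ractive) map), so $f_*$ is a left adjoint in $\Mod{\Phi}$; the assignment is compatible with the hom-orders since $f\le g$ iff $g_*\sqsubseteq f_*$. It is fully faithful: if $X,Y\in\Sob{\Phi}$ and $\varphi\colon X\kmodto Y$ is a left adjoint module in $\Mod{\Phi}$, then $\Phi$-soberness of $Y$ gives a unique $f\colon X\to Y$ with $\varphi=f_*$, and this $f$ is $\Phi$-dense because $f_*=\varphi\in\Mod{\Phi}$; on $2$-cells this is once more $f\le g$ iff $g_*\sqsubseteq f_*$. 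For essential surjectivity I would send an arbitrary space $X$ (an object of $\map(\Mod{\Phi})$) to the $\Phi$-sober space $\tilde{X}_\Phi$ and invoke the isomorphism $(\eta_X^\Phi)_*\colon X\kmodto\tilde{X}_\Phi$ in $\Mod{\Phi}$ recorded above: it is a left adjoint module (as $\eta_X^\Phi$ is fully faithful, $(\eta_X^\Phi)_*\dashv(\eta_X^\Phi)^*$), hence already an isomorphism in $\map(\Mod{\Phi})$.

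Next I would bring in the dual equivalence $S\colon\Mod{\Phi}^\op\rightleftarrows\ATop{\Phi}_{\sup}/\AApp{\Phi}_{\sup}\colon C$ established above, together with the fact that $S$ and $C$ are $2$-functors between locally ordered $2$-categories. A $2$-functor carries an adjoint pair of $1$-cells to an adjoint pair and induces equivalences on hom-posets; and a dual equivalence interchanges the left adjoint modules of $\Mod{\Phi}$ with the right adjoint $\Phi$-cocontinuous maps of $\ATop{\Phi}/\AApp{\Phi}$. Hence $(S,C)$ restricts to a dual equivalence $\map(\Mod{\Phi})\simeq\ATop{\Phi}/\AApp{\Phi}$. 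Composing this with $(-)_*\colon\Sob{\Phi}\simeq\map(\Mod{\Phi})$ yields the asserted dual equivalence $\Sob{\Phi}\simeq\ATop{\Phi}/\AApp{\Phi}$.

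The only genuinely non-formal ingredients are the two facts imported from the preceding material: that $\tilde{X}_\Phi$ is $\Phi$-sober and carries the representing isomorphism $(\eta_X^\Phi)_*$ (which rests on identifying $\tilde{X}_\Phi$ with the equaliser of $\Phi(\yoneda_X^\Phi)$ and $\yoneda_{\Phi X}^\Phi$ and on Theorem~\ref{TheoremGenYoneda}), and the dual equivalence $\Mod{\Phi}^\op\simeq\ATop{\Phi}_{\sup}/\AApp{\Phi}_{\sup}$ itself. With both in hand, the one place that needs care — and where it is easiest to slip — is keeping the passage to the left-adjoint versus right-adjoint sub-$2$-categories consistent on the two sides of the equivalence; everything else is routine.
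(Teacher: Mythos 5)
Your proposal is correct and follows essentially the same route as the paper: the theorem is obtained by composing the equivalence $(-)_*:\Sob{\Phi}\to\map(\Mod{\Phi})$ (with essential surjectivity supplied by the $\Phi$-soberness of $\tilde{X}_\Phi$ and the isomorphism $(\eta_X^\Phi)_*$) with the restriction of the $2$-equivalence given by $S$ and $C$ to adjoint $1$-cells, which dually matches left adjoint modules with right adjoint $\Phi$-cocontinuous maps. You merely spell out the full faithfulness and variance bookkeeping that the paper leaves implicit.
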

It is high time to present examples.

\section{Examples}\label{SectionExamples}

The main purpose of this section is to describe some possible choices of $\Mod{\Phi}$, to explain why they (might) lead to interesting classes of spaces, and in some of these case to spell out the meaning of the duality theorems of the previous sections. We have to admit right at the beginning that, unfortunately, we do not have yet intrinsic topological discription of $\Phi$-distributivity or $\Phi$-algebraicity other then the relationship of distributivity with frames exhibited in Section \ref{SectionFramesDist}. Nevertheless, we hope to be able to convince the reader that it is at least desireable to have such descriptions.

In the topological case, we know that $\mPsh$ is isomorphic to the filter monad on $\TOP$. Consequently, the monad $\mPhi$ corresponding to $\Mod{\Phi}$ is isomorphic to a submonad of the filter monad, which puts us in the context of \citep{EF_SemDom} where many semantic domains are identified as the algebras for certain submonads of the filter monad. In \citep{CH_CocomplII} we showed already how the defining properties of these submonads translate into the language of modules. As one of the virtues of this ``module approach'' we see the fact it automatically provides us with metric and other variants of these monads and therefore of these topological domains. It was also observed there that many of these examples can be described in a uniform manner as follows: take $\Mod{\Phi}$ as the category all those modules $\varphi:X\kmodto Y$ where ``$\varphi$-colimits commute with certain limits'' \citep{KS_Colim}, that is, where the monotone/contractive map
\[
 \varphi\kleisli-:\Mod{\mU}(1,X)\to \Mod{\mU}(1,Y)
\]
preserves chosen limits.

\subsection{The absolute case}
Certainly we can choose no limits at all, and hence $\Mod{\Phi}=\Mod{\mU}$ is the category of all $\mU$-modules. The results of the previous section restate Theorem \ref{TheoremAbsDuality} and, more general, tell us that the category $\CDTOP_{\sup}$ respectively $\CDAP_{\sup}$ of completely distributive T$_0$-spaces and left adjoint cont(inuous/ractive) maps is dually equivalent to the idempotent splitting completion $\kar(\Mod{\mU})$ of $\Mod{\mU}$, and that the category $\TATOP_{\sup}$ respectively $\TAAP_{\sup}$ of totally algebraic T$_0$-spaces and left adjoint cont(inuous/ractive) maps is dually equivalent to $\Mod{\mU}$.

\subsection{The ``inhabited'' case}

Our next example is $\Mod{\Phi}$ being the category of all $\mU$-modules $\varphi:X\to Y$ where $\varphi\kleisli-$ preserves the top element, we call such a $\mU$-modules \emph{inhabited}. Explicitly, $\varphi:X\kmodto Y$ is inhabited if and only if
\begin{align*}
 \forall y\in Y\,\exists\frx\in UX\,.\,\frx \varphi y
&&\text{resp.}&&
 0\geqslant\sup_{y\in Y}\inf_{\frx\in UX}\varphi(\frx,y).
\end{align*}
A continuous map $f$ between topological spaces is $\Phi$-dense if and only if $f$ is dense in the usual topological sense, and a topological space $X$ is $\Phi$-cocomplete if and only if $X$ is densly injective, that is, a Scott domain \citep{Book_ContLat}. Correspondingly, we call a contraction map $f:X\to Y$ between approach spaces $X=(X,a)$ and $Y=(Y,b)$ \emph{dense} if $f$ is $\Phi$-dense, that is, if
\[
 0\geqslant\inf_{\frx\in UX} b(Uf(\frx),y)
\]
for all $y\in Y$. Every right adjoint $\mU$-module is inhabited, hence a topological/approach space is $\Phi$-sober precisely if it is sober. The results of the previous section tells us now that the category $\SOB_\textrm{dense}$ respectively  $\ASOB_\textrm{dense}$ of sober spaces and dense maps is dually equivalent to the category of ``inhabited algebraic'' spaces and right adjoint cont(inuous/ractive) maps which preserve inhabited suprema.

\subsection{The prime case}

One can go further and consider $\Mod{\Phi}$ being the category of all $\mU$-modules $\varphi:X\kmodto Y$ where $\varphi\kleisli-$ preserves finite or countable suprema, or even all weighted limits. The latter case is not so interesting here since for this choice a $\mU$-module $\varphi$ belongs to $\Mod{\Phi}$ if and only if $\varphi$ is right adjoint. Colimits weighted by right adjoints are absolute, that is, ever cont(inuous/ractive) map preserves them. Moreover, a T$_0$-space $X$ is $\Phi$-cocomplete if and only if $X$ is $\Phi$-distributive if and only if $X$ is $\Phi$-algebraic if and only if $X$ is sober. Consequently, Theorem \ref{TheoremDualSobAlg} gives us the flash news that the category of sober spaces and left adjoint cont(inuous/ractive) maps is dually equivalent to the category of sober spaces and right adjoint cont(inuous/ractive) maps.

The first case, on the other hand, seems to be more promising. First of all, we find it interesting that this definition, applied to metric spaces, yields forward Cauchy completeness as shown in \citep{Vic_LocComplI}: for a metric space $X$, the modules $\psi:X\modto 1$ where $\psi\cdot-$ preserves finite infima correspond precisely to forward Cauchy filters, and $x$ is a supremum of $\psi$ if and only if $x$ is a limit point of the corresponding filter. Turning now to the topological case, the induced monad $\mPhi$ on $\TOP$ is isomorphic to the prime filter (of opens) monad which we encountered already in Section \ref{SectionDualSpace}. Recall from Section \ref{SectionDualSpace} that $\TOP^{\mPhi}$ is equivalent to the category $\ORDCH_\sep$ of anti-symmetric ordered compact Hausdorff space and monotone continuous maps. These spaces are also known under the designation \emph{stably compact} as they are precisely those spaces which are sober, locally compact, and have the property that its compact down-sets\footnote{Recall that our underlying order is dual to the specialisation order} are closed under finite intersections. As usual, it is enough to require stability under empty and binary intersections, and stability under empty intersection translates to compactness of $X$. Note that a T$_0$-space is locally compact if and only if it is core-compact if and only if it is exponentiable. With an eye on the approach case, we remark that it follows from ``general abstract non-sense'' that every $\Phi$-cocomplete T$_0$-space $X$ has these properties. In fact, $X$ is sober since $\Mod{\Phi}$ contains all right adjoint modules, and $X$ is exponentiable respectively $+$-exponentiable by Proposition \ref{PropTensExp}. For a stably compact space $X$ and $A\subseteq X$, $A$ is a compact down-set if and only if the characteristic map $\varphi:X\to\two$ of its complement is a morphism in $\ORDCH_\sep$, that is, $\varphi$ is monotone and preserves smallest convergence points of ultrafilters (or, equivalently, of prime filters of opens). Since both maps
\begin{align*}
\bigvee:\two^n\to\two &&\text{and}&& \inf:[0,\infty]^n\to [0,\infty]
\end{align*}
are left adjoints in $\TOP$ and $\AP$ respectively, and since both inclusion functors $\ORDCH_\sep\hrw\TOP$ and $\METCH_\sep\hrw\AP$ preserve products, we conclude that $\bigvee:\two^n\to\two$ and $\inf:[0,\infty]^n\to [0,\infty]$ are morphisms in $\ORDCH_\sep$ and $\METCH_\sep$ respectively. Therefore the supremum of maps $\varphi_i:X\to\two$ ($1\le i\le n$) in $\ORDCH_\sep$ is again in $\ORDCH_\sep$, and likewise for the metric case.

For a stably compact topological space $X$ one can easily show that every prime filter $\psi$ has a smallest convergence point $x$, and the map $\psi\mapsto x$ from $\Phi X$ to $X$ is indeed continuous. In order to explain this better we make us of a slightly different but equivalent description of stably compact topological space used in \citep{Sim82a}. There a space $X$ is called stable if, for open subsets $U_1,\ldots,U_n$ and $V_1,\ldots,V_n$ ($n\in\N$) of $X$ with $U_i\ll\V_i$ for each $1\le i\le n$, also $\bigcap_i U_i\ll\bigcap_i V_i$. Of course it is enough to consider only $n=0$ and $n=2$, and for $n=0$ the condition reduces to $X\ll X$, that is, $X$ is compact. Then $X$ is called well-compact if it is sober, core-compact and stable. It is shown \citep[Lemma 3.7]{Sim82a} that, for a core-compact and stable space $X$, the set of limit points of a prime filter is irreducible. Hence, if $X$ is in addition sober, every prime filter has a smallest convergence point. Furthermore, \citep[Lemma 3.9]{Sim82a} states that the induced map $\Phi X\to X$ is continuous\footnote{Actually, it is even shown there that this map is well-compact, which in the language of the this paper means that it is $\Phi$-cocontinuous. But since the monad $\mPhi$ is of Kock-Z\"oberlein type, we know that, once it is continuous, it is even left adjoint.}. At the end of the next subsection we provide a different argument for this fact which also works for approach spaces. If $X$ is only weakly sober, then this map is only defined up to equivalence, but in fact any chosen map $\Phi X\to X$ is continuous. It is now clear that, without assuming the T$_0$-axiom, a topological space $X$ is $\Phi$-cocomplete if and only if $X$ is weakly sober, exponentiable and stable.

A stably compact space is called \emph{spectral} (or \emph{coherent}) if the compact down-sets form a basis for the topology of $X$. One easily verifies that each space of the form $\Phi X$ is spectral, and with an argument similar to the one used before Lemma \ref{LemmaSigmaInjective} one shows that every $\Phi$-distributive space is spectral. Unfortunately, I do not know yet if the converse is also true.

A continuous map $f:(X,a)\to(Y,b)$ between topological spaces is $\Phi$-dense if it is dense in a very strong sense: for each $y\in Y$, there must exist a largest ultrafilter $\frx\in UX$ with $Uf(\frx)\to y$. For lack of a better name we call these maps ultra-dense. The general results of \citep{CH_CocomplII} tell us that a topological T$_0$-space is stably compact if and only if it is injective with respect to ultra-dense embeddings. 
Furthermore, by Theorem \ref{TheoremDualSobAlg}, the category $\SOB_{\textrm{ultra-dense}}$ of sober spaces and ultra-dense maps is dually equivalent to the category of $\Phi$-algebraic spaces (which are very special spectral spaces) and right adjoint continuous maps which preserve smallest convergence points of ultrafilters.

\subsection{The ultrafilter case}

A closely related example one obtains using $\yonedaT_X:UX\to PX$ of Section \ref{SectionContMetSp}: for a space $X$, let $\Phi[X]$ be the image of $\yonedaT_X$. Of course, for topological spaces one gets the prime filter monad discussed above, but the situation is different for approach spaces. For every $\mU$-module of the form $\yonedaT_X(\frx_0):X\kmodto 1$ one has
\[
 \yonedaT_X(\frx_0)\kleisli\varphi=\xi\cdot U\varphi(\frx_0)
\]
for all $\varphi:1\kmodto X$ and therefore $\yonedaT_X(\frx_0)\kleisli-:\Mod{\mU}(1,X)\to[0,\infty]$ preserves finite sup's. Furthermore, using Remark \ref{RemarkXi} one shows that 
\[
\yonedaT_X(\frx_0)\kleisli(\hom(u,\varphi))\geqslant
\hom(u,\yonedaT_X(\frx_0)\kleisli\varphi) 
\]
for every $\varphi:1\kmodto X$ and $u\in[0,\infty]$. Since for every contraction map $\Mod{\mU}(1,X)\to[0,\infty]$ one has the reverse inequality, we conclude that $\yonedaT_X(\frx_0)\kleisli-$ preserves even the operation $\hom(u,-)$ on $\Mod{\mU}(1,X)$. This begs the question if every module $\psi:X\kmodto 1$ where $\psi\kleisli-$ preserves all finite sup's and ``homing'' with all $u\in[0,\infty]$ is of the form $\psi=\yonedaT_X(\frx)$ for some $\frx\in UX$. If this is the case it follows that the corresponding class $\Mod{\Phi}$ of $\mU$-modules is a subcategory of $\Mod{\mU}$ (see Theorem \ref{TheoremSatComp}); however, since we do not know this yet we present a different argument.

Recall that the functor $M_0:\AP\to\MET$ sends $X=(X,a)$ to $M_0(X)=(UX,\tilde{a})$ where $\tilde{a}=Ua\cdot m_X^\circ$. More general, for an arbitrary $\mU$-relation $\varphi:X\krelto Y$ we define $\tilde{\varphi}=U\varphi\cdot m_X^\circ:UX\relto UY$. Given also $\psi:Y\krelto Z$, then
\[
\widetilde{\psi\kleisli\varphi}=U\psi\cdot UU\varphi\cdot Um_X^\circ\cdot m_X^\circ
=U\psi\cdot UU\varphi\cdot m_{UX}^\circ\cdot m_X^\circ
=U\psi\cdot m_X^\circ\cdot U\varphi\cdot m_X^\circ=\tilde{\psi}\cdot\tilde{\varphi}.
\]
Consequently, if $\varphi:X\kmodto Y$ is $\mU$-module, then $\tilde{\varphi}:M_0(X)\modto M_0(Y)$ is a module between metric spaces. We also remark that $\varphi$ can be seen as a module $\varphi:M_0(X)\modto Y_0$. By definition, $\varphi:X\kmodto Y$ belongs to $\Mod{\Phi}$ if there is a function\footnote{guaranteed by the Axiom of Choice} $f:Y\to UX$ such that
\[
 \varphi=\tilde{a}(-,f(-))=f^\circ\cdot\tilde{a}=f^*.
\]
Note that $f:M_0(X)\to Y_0$ is necessarily contractive since $f^*=\varphi$ is a module between metric spaces. Let now $\varphi:(X,a)\kmodto(Y,b)$ and $\psi:(Y,b)\to (Z,c)$ be in $\Mod{\Phi}$ with $\psi=g^*$ and $\varphi=f^*$. Then
\begin{multline*}
 \psi\kleisli\varphi=g^\circ\cdot\tilde{b}\cdot U\varphi\cdot m_X^\circ
=g^\circ\cdot\tilde{b}\cdot\tilde{\varphi}
=g^\circ\cdot\widetilde{b\kleisli\varphi}
=g^\circ\cdot\tilde{\varphi}
=g^\circ\cdot U\varphi\cdot m_X^\circ\\
=g^\circ\cdot Uf^\circ\cdot UUa\cdot Um_X^\circ\cdot m_X^\circ
=g^\circ\cdot Uf^\circ\cdot m_X^\circ \cdot Ua\cdot m_X^\circ
=(m_X\cdot Uf\cdot g)\cdot\tilde{a}=(m_X\cdot Uf\cdot g)^*.
\end{multline*}

By definition, the corresponding monad $\mPhi$ appears in the (epi,mono) factorisation $U\twoheadrightarrow\mPhi\rightarrowtail P$ of the monad morphism $\yonedaT:U\to P$, and the monad morphism $U\twoheadrightarrow\mPhi$ induces full embeddings $\AP^{\mPhi}\to\METCH$. By the ``second Yoneda lemma'' (Remark \ref{RemarksecondYoneda}), $\yonedaT_X:UX\to PX$ is fully faithful, hence $UX\twoheadrightarrow\mPhi X$ is a quotient map, in fact, $UX\twoheadrightarrow\mPhi X$ gives the T$_0$-reflection of $UX$. If $X$ is a separated metric compact Hausdorff space, then the universal property of $UX\twoheadrightarrow\mPhi X$ provides us with  an inverse $\Sup_X^\Phi:\Phi X\to X$ of $\yoneda_X^\Phi:X\to\Phi X$. We conclude that $\AP^{\mPhi}$ is equivalent to the category of separated metric compact Hausdorff space.

Given an approach space $X=(X,a)$ which is a $\Phi$-algebra, then $X$ is $+$-exponentiable by Proposition \ref{PropTensExp}. Furthermore, the structure map $\alpha:UX\to X$ picks, for each ultrafilter $\frx$, a supremum of the $\mU$-module $\yonedaT_X(\frx):X\kmodto 1$, that is, a point $\alpha(\frx)\in X$ such that, for each $x\in X$, $a(\frx,x)=a_0(\alpha(\frx),x)$. Conversely, assume now that an approach space $X=(X,a)$ admits all suprema of $\mU$-module $\yonedaT_X(\frx):X\kmodto 1$ where $\frx\in UX$. Let $l:UX\to X$ be any map which chooses a supremum of $\yonedaT_X(\frx)$, for each $\frx\in UX$. Then $l:UX\to X$ is a morphism in $\MET$ but in general not in $\AP$. However, if $X$ is in addition $+$-exponentiable, then $l$ is indeed a morphism in $\AP$. To see this, recall from \citep{Hof_TopTh} that $+$-exponentiability of $X$ is equivalent to commutativity of 
\[
\xymatrix{UUX\ar[d]_{m_X}\ar[r]|-{\object@{|}}^{Ua} & UX\ar[d]|-{\object@{|}}^a\\
 UX\ar[r]|-{\object@{|}}_a & X}
\]
in $\NREL$. Then, with $a=a_0\cdot l$, one obtains
\begin{multline*}
l\cdot Ua\cdot m_X^\circ\cdot m_X
\le a_0\cdot l\cdot Ua\cdot m_X^\circ\cdot m_X
=a\cdot m_X
=a\cdot Ua\\
=a\cdot Ua_0\cdot Ul
=a\cdot Ua\cdot Ue_X\cdot Ul
\le a\cdot Ua\cdot m_X^\circ\cdot Ul
\le a\cdot Ul.
\end{multline*}
We conclude that an approach space $X$ is $\Phi$-cocomplete if and only if $X$ is $+$-exponentiable and, for each ultrafilter $\frx\in UX$, there exists a point $x_0\in X$ such that $a(\frx,x)=a_0(x_0,x)$, for all $x\in X$.

\subsection{The tensor case}
We discuss briefly a further example which is only relevant for the approach case. For any approach space $X=(X,a)$, we put $\Phi[X]$ to be the set of all $\mU$ modules $\psi:X\kmodto 1$ where $\psi=u\kleisli x^*$ where $x\in X$ and $u\in[0,\infty]$. Hence, for $\frx\in UX$, $\psi(\frx)=a(\frx,x)+u$. In order to see that $\Mod{\Phi}$ is closed under compositions in $\Mod{\mU}$, it seems to be more convenient to make use of Theorem \ref{TheoremSatComp} and prove that $\Mod{\Phi}$ is closed under the two types of compositions specified there. In fact, for a contractive map $f:X\to Y$ one has
\[
 \psi\kleisli f^*=u\kleisli x^*\kleisli f^*=u\kleisli f(x)^*,
\]
and for $g:Y\to X$ with $g_*$ in $\Mod{\Phi}$ one obtains
\[
 \psi\kleisli g_*=u\kleisli x^*\kleisli g_*=u\kleisli v\kleisli y^*=(v+u)\kleisli y^*,
\]
where $x^*\kleisli g_*=v\kleisli y^*$. The monad $\mPhi$ corresponding to this choice of modules is closely related to the monad $\mM=(M,0,+)$ induced by the monoid $[0,\infty]=([0,\infty],+,0)$ in the monoidal category $\AP$ since there is a monad morphism $\trans:\mM\to\mPhi$ described before Theorem \ref{TheoremInjApExp}.

\subsection{The ultra-and-tensor case}

The proof of Theorem \ref{TheoremInjApExp} suggests to consider a combination of the two previous examples. Given $X$ in $\AP$, we define $\Phi[X]$ as the set of all $\mU$-modules $\psi:X\kmodto 1$ of the form $\psi=\yonedaT_X(\frx)\nplus u$ for some $\frx\in UX$ and $u\in[0,\infty]$ (see Section \ref{SectionContMetSp}, before Theorem \ref{TheoremInjApExp}). Hence, a $\mU$-module $\varphi:X\kmodto Y$ between approach spaces $X=(X,a)$ and $Y=(Y,b)$ belongs to $\Mod{\Phi}$ precisely if there exist functions $h:Y\to UX$ and $\alpha:Y\to[0,\infty]$ with
\[
 \varphi(\frx,y)=\tilde{a}(\frx,h(y))+\alpha(y),
\]
for all $\frx\in UX$ and $y\in Y$. As above, we use Theorem \ref{TheoremSatComp} to show that $\Mod{\Phi}$ is closed under compositions in $\Mod{\mU}$. Let $X=(X,a)$ and $Y=(Y,b)$ be approach spaces and assume that $\psi:X\kmodto 1$ belongs to $\Phi[X]$ with corresponding $\frx_0\in UX$ and $u\in [0,\infty]$. For $f:X\to Y$ in $\AP$ and $\fry\in UY$ one has
\[
\psi\kleisli f^*(\fry)
=\psi\cdot(Uf^\circ\cdot\tilde{b})(\fry)
=\inf_{\frx\in UX}\tilde{b}(\fry,Uf(\frx))+\tilde{a}(\frx,\frx_0)+u
=\tilde{c}(\frz,Uf(\frx_0))+u.
\]
Therefore $\psi\kleisli f^*$ belong to $\Mod{\Phi}$. Let now that $g:Y\to X$ be in $\AP$ such that $g_*:Y\kmodto X$ is in $\Mod{\Phi}$, witnessed by $k:X\to UY$ and $\beta:X\to[0,\infty]$. Hence, for all $\fry\in UY$ and $x\in X$,
\[
 a\cdot Ug(\fry,x)=g_*(\fry,x)=\tilde{b}(\fry,k(x))+\beta(x).
\]
To see that $\psi\kleisli g_*$ belongs to $\Phi[Y]$, observe first that, for a numerical relation $r:X\relto Y$, a function $\gamma:Y\to[0,\infty]$ and for $s(x,y)=r(x,y)+\gamma(y)$, one has
\[
 Us(\frx,\fry)=Ur(\frx,\fry)+\xi\cdot U\gamma(\frx)
\]
for all $\frx\in UX$ and $\fry\in UY$, where $\xi:U[0,\infty]\to[0,\infty]$ is defined as $\xi(\fru)=\sup_{A\in\fru}\inf A$. From this one concludes
\begin{align*}
\tilde{a}(Ug(\fry),\frx) &= Ua\cdot m_X^\circ\cdot Ug(\fry,\frx)\\
&= U(a\cdot Ug)\cdot m_Y^\circ(\fry,\frx)\\
&= \inf_{\frY,\,m_Y(\frY)=\fry} U(a\cdot Ug)(\frY,\frx)\\
&=\inf_{\frY,\,m_Y(\frY)=\fry} U\tilde{b}(\frY,Uk(\frx))+\xi\cdot U\beta(\frx)\\
&=U\tilde{b}\cdot m_Y^\circ(\fry,Uk(\frx))+\xi\cdot U\beta(\frx)\\
&=\tilde{b}(\fry,m_Y\cdot Uk(\frx))+\xi\cdot U\beta(\frx),
\end{align*}
and finally obtains
\[
\psi\kleisli g_*(\fry)=\psi(Ug(\fry))=\tilde{a}(Ug(\fry),\frx_0)+u
=\tilde{b}(\fry,m_Y\cdot Uk(\frx_0))+\xi\cdot U\beta(\frx_0)+u,
\]
for all $\fry\in UY$.

Both contraction maps
\begin{align*}
\trans_X:X\otimes [0,\infty]\to PX,\, (u,x)\mapsto a(-,x)+u &&\text{and}&&
\yonedaT_X:UX\to PX,\, \frx\mapsto\tilde{a}(-,\frx)
\end{align*}
factor through $\Phi X\hrw PX$, and this is all one needs to make the proof of Theorem \ref{TheoremInjApExp} work. Therefore every $\Phi$-cocomplete approach space is exponentiable. This also raises the question if the monad $\mPhi$ is the image of the composite of the ultrafilter monad $\mU=\umonad$ on $\AP$ and the monad $\mM=(M,0,+)$. It is well-known that in general the composition of monads does not lead to a monad again; however, it does if we have a distributive law at hand. Recall that a distributive law \citep{Beck_DistLaws} of $\mU$ over $\mM$ is a natural transformation $d:UM\to MU$ such that the diagrams
\begin{equation}\label{DistLawDiag}
\xymatrix{
  & U\ar[dl]_{U0}\ar[dr]^{0_U} & & UMM\ar[r]^{d_M}\ar[d]_{U+} & MUM\ar[r]^{Md} & MMU\ar[d]^{+_U}\\
UM\ar[rr]^d & & MU & UM\ar[rr]^d && MU\\
& M\ar[ul]^{e_M}\ar[ur]_{Me} & & UUM\ar[u]^{m_M}\ar[r]_{Ud} & UMU\ar[r]_{d_U} & MUU\ar[u]_{Mm}
}
\end{equation}
commute. Each contractive map $\xi:U[0,\infty]\to[0,\infty]$ defines a natural transformation $d:UM\to MU$ where $d_X$ is the composite
\[
U(X\otimes[0,\infty])\xrightarrow{\can} UX\otimes U[0,\infty]
\xrightarrow{1_X\otimes\xi}UX\otimes[0,\infty],
\]
and vice versa, each natural transformation $d:UM\to MU$ comes from a unique contraction map $\xi:U[0,\infty]\to[0,\infty]$. Moreover, the natural transformation $d:UM\to MU$ associated to $\xi$ is a distributive law of $\mU$ over $\mM$ if and only if the diagrams
\begin{align}\label{StrictAxioms}
\xymatrix{[0,\infty]\ar[r]^{e_{[0,\infty]}}\ar[dr]_{1_{[0,\infty]}} & U[0,\infty]\ar[d]^\xi\\ & [0,\infty]}
&&
\xymatrix{UU[0,\infty]\ar[r]^{m_{[0,\infty]}}\ar[d]_{U\xi} & U[0,\infty]\ar[d]^\xi\\ U[0,\infty]\ar[r]_\xi & [0,\infty]}
\\
\xymatrix{U1\ar[r]^-{U0}\ar[d]_{!} & U[0,\infty]\ar[d]^\xi \\ 1\ar[r]_-{0} & [0,\infty]}
&&
\xymatrix{U([0,\infty]\times[0,\infty])\ar[r]^-{U+}\ar[d]_{\langle\xi\cdot U\pi_1,\xi\cdot U\pi_2\rangle} & U[0,\infty]\ar[d]^\xi\\ [0,\infty]\times[0,\infty]\ar[r]_-{+} & [0,\infty]}\notag
\end{align}
commute. In fact, one easily verifies that $d=(d_X)_X$ is a natural transformation $d:UM\to MU$, and that commutativity of the diagrams \eqref{DistLawDiag} correspond precisely to commutativity of the diagrams \eqref{StrictAxioms}. Let now $d=(d_X)_X:UM\to MU$ be a natural transformation. Then, thanks to naturality of $d$, for each set $X$ we have that $\pi_2\cdot d_X:\beta(X\times[0,\infty])\to[0,\infty]$ is equal to $\xi\cdot U(\pi_2)$ where $\xi=\pi_2\cdot d_1:U[0,\infty]\to[0,\infty]$. On the other hand, $\pi_2\cdot d$ induces a natural transformation $UM\to U$ between $\SET$-functors where $UM:\SET\to\SET$ preserves finite sums. Since there is exactly one natural transformation $UM\to U$ (see \citep{Bor_SumUlt}), we conclude $\pi_1\cdot d=\beta(\pi_1)$. In particular, distributive laws of $\mU$ over $\mM$ correspond to $\mU$-algebra structures on the approach space $[0,\infty]$. Since $\mU$ in $\AP$ is of Kock-Z\"oberlein type and $[0,\infty]$ is separated there is only one such structure, namely $\xi:U[0,\infty]\to[0,\infty],\,\fru\mapsto\sup_{A\in\fru}\inf A$, and $\xi$ makes indeed all diagrams \eqref{StrictAxioms} commutative.

The algebras for the composite monad $\mM\diamond\mU$ on $\AP$ can be described as pairs $(X,*)$ where $X$ is a metric compact Hausdorff space and $*:X\otimes[0,\infty]\to X$ is a $[0,\infty]$-action on $X$ in $\METCH$, and a homomorphism is a $\mU$-homomorphism which preserves the action. There is a canonical natural transformation $\yonedaT^{[0,\infty]}:\mM\diamond\mU\to\mPsh$ whose $X$-component is the composite
\[
UX\otimes[0,\infty]\xrightarrow{\yonedaT_X\otimes 1}
PX\otimes[0,\infty]\xrightarrow{\nplus}PX
\]
(see also Example \ref{ExampleActionPX}). It follows now from Proposition \ref{PropositionMonadMorphism} that $\yonedaT^{[0,\infty]}$ is a monad morphism. In fact, for an approach space $X$, composing $\yonmult_X:PPX\to PX$ with $\yonedaT^{[0,\infty]}_{PX}$ gives
\[
UPX\otimes[0,\infty]\xrightarrow{\alpha\otimes 1}
PX\otimes[0,\infty]\xrightarrow{\nplus}PX
\]
since both diagrams
\[
\xymatrix{
UPX\otimes[0,\infty]\ar[r]^{\yonedaT^{PX}\otimes 1}\ar[dr]_{\alpha\otimes 1} &
PPX\otimes[0,\infty]\ar[r]^-{\nplus}\ar[d]_{\yonmult_X\otimes 1} &
PPX\ar[d]^{\yonmult_X}\\
& PX\otimes[0,\infty]\ar[r]_-{\nplus} & PX}
\]
commute. Hence, the $[0,\infty]$-action on $PX$ is given by $\nplus:PX\otimes[0,\infty]\to PX$ which is indeed a morphism in $\METCH$ by Example \ref{ExampleActionPX}.

\subsection{Monads over $\SET$}

So far we have exploited the fact that the category $\Cocts{\Phi}$ is monadic over $\TOP$ respectively $\AP$. However, under further conditions on $\Mod{\Phi}$, $\Cocts{\Phi}$ is also monadic over $\SET$, and therefore Theorem \ref{TheoremSplit} applies to the induced monad on $\SET$. To finish this paper we briefly discuss this case.

Recall from \citep{CH_CocomplII} that $\Cocts{\Phi}$ is monadic over $\SET$ provided that, in addition to the condition imposed in Section \ref{SectionRelative}, $\Mod{\Phi}$ satisfies the following condition which we assume from now on: for each surjective cont(inuous/ractive) map $f$, $f_*$ belongs to $\Mod{\Phi}$. Hence, under these conditions, $\Cocts{\Phi}\cong\SET^{\mPhidiscrete}$ where $\mPhidiscrete$ is the restriction of the monad $\mPhi$ on $\TOP$ respectively $\AP$ to $\SET$. A morphism from $X$ to $Y$ in the Kleisli category $\SET_{\mPhidiscrete}$ is a cont(inuous/ractive) map $X\to \Phi Y$ where we consider $X=(X,e_X^\circ)$ and $Y=(Y,e_Y^\circ)$ with the discrete structure, and it corresponds to a $\mU$-module $X\kmodto Y$ in $\Mod{\Phi}$. We write $\Mat{\Phi}$ for the ordered category of all unitary $\mU$-relations $\varphi:X\krelto Y$ where $\varphi:(X,e_X^\circ)\kmodto(Y,e_Y^\circ)$ belongs to $\Mod{\Phi}$, the composition is Kleisli composition and the order on hom-sets is the pointwise one. Then the morhpisms $\varphi:X\krelto Y$ of $\Mat{\Phi}$ correspond precisely to the morphisms $\mate{\varphi}:Y\to\Phi X$ in $\SET_{\mPhidiscrete}$, and with the help of Example \ref{comp_as_colim} one concludes that the compositional structures match. In conclusion, $\Mat{\Phi}\cong\SET_{\mPhidiscrete}$, even as ordered categories. By definition, $\Mat{\Phi}$ embeds fully into $\Mod{\Phi}$ by considering a set as a discrete space. For a topological/approach space $X=(X,a)$, the convergence relation $a:X\krelto X$ is unitary and idempotent. Furthermore, $a=i^*\kleisli i_*$ where $i:(X,e_X^\circ)\to(X,a),\,x\mapsto x$, hence $a:(X,e_X^\circ)\kmodto(X,e_X^\circ)$ belongs to $\Mod{\Phi}$. From this one obtains a full embedding $\Mod{\Phi}\to\kar(\Mat{\Phi})$, and therefore $\kar(\Mod{\Phi})\cong\kar(\Mat{\Phi})$. From Theorem \ref{TheoremKarDistrib} we infer now that \[\kar(\Mat{\Phi})^\op\cong\DTop{\Phi}_{\sup}/\DApp{\Phi}_{\sup}.\]

For the choice of all $\mU$-modules on topological spaces, the result above tells us that $\CDTOP_{\sup}$ is dually equivalent to $\kar(\UREL)$, where $\UREL$ denotes the ordered category of sets and unitary $\mU$-relations. Hence $\FRM\cong\CDTOP$ is dually equivalent to category $\map(\kar(\UREL))$ defined by the left adjoint morphisms in $\kar(\UREL)$.

\def\cprime{$'$}


\end{document}